\def\namedlabel#1#2{\begingroup
   \def\@currentlabel{#2}%
   \label{#1}\endgroup
}
\theoremstyle{plain}
\newtheorem{thm}{Theorem}[section]
\newtheorem{cor}[thm]{Corollary}
\newtheorem{lem}[thm]{Lemma}
\newtheorem{prop}[thm]{Proposition}
\theoremstyle{definition}
\newtheorem{defn}[thm]{Definition}
\theoremstyle{remark}
\setlist[enumerate,1]{leftmargin=2em}
\def\A{\mathbf A}
\def\C{\mathbb C}
\def\H{\mathcal H}
\def\N{\mathbb N}
\def\T{\mathbf T}
\def\Z{\mathbb Z}
\def\U{U(\mathfrak{sl}_2)}
\newcommand{\floor}[1]{\left\lfloor #1 \right\rfloor}
\newcommand{\ceil}[1]{\left\lceil #1 \right\rceil}
\title[A connection behind the Terwilliger algebras of
 $H(D,2)$ and $\frac{1}{2}H(D,2)$]{A connection behind the Terwilliger algebras \newline  of $H(D,2)$ and $\frac{1}{2}H(D,2)$}
\author{Hau-Wen Huang}
\address{
Department of Mathematics\\
National Central University\\
Chung-Li 32001 Taiwan
}
\email{hauwenh@math.ncu.edu.tw}
\author{Chia-Yi Wen}
\address{
Department of Applied Mathematics\\
National Yang Ming Chiao Tung University\\
Hsinchu 30010 Taiwan
}
\email{cywen.sc08@nycu.edu.tw}
\begin{document}

\begin{abstract}
The universal enveloping algebra $\U$ of $\mathfrak{sl}_2$ is a unital associative algebra over $\C$ generated by $E,F,H$ subject to the relations
\begin{align*}
[H,E]=2E,
\qquad
[H,F]=-2F,
\qquad
[E,F]=H.
\end{align*}
The distinguished central element
$$
\Lambda=EF+FE+\frac{H^2}{2}
$$
is called the Casimir element of $\U$.
The universal Hahn algebra $\H$ is a unital associative algebra over $\C$ with generators $A,B,C$ and the relations assert that
$[A,B]=C$ and each of
\begin{align*}
\alpha=[C,A]+2A^2+B,
\qquad
\beta=[B,C]+4BA+2C
\end{align*}
is central in $\H$.
The distinguished central element
$$
\Omega=4ABA+B^2-C^2-2\beta A+2(1-\alpha)B
$$
is called the Casimir element of $\H$.
By investigating the relationship between the Terwilliger algebras of the hypercube and its halved graph, we discover the algebra homomorphism $\natural:\H\rightarrow \U$ that sends
\begin{eqnarray*}
A &\mapsto & \frac{H}{4}, \\
B & \mapsto & \frac{E^2+F^2+\Lambda-1}{4}-\frac{H^2}{8}, \\
C & \mapsto & \frac{E^2-F^2}{4}.
\end{eqnarray*}
We determine the image of $\natural$ and show that the kernel of $\natural$ is the two-sided ideal of $\H$ generated by $\beta$ and $16 \Omega-24 \alpha+3$.
By pulling back via $\natural$ each $\U$-module can be regarded as an $\H$-module.
For each integer $n\geq 0$ there exists a unique $(n+1)$-dimensional irreducible $\U$-module $L_n$ up to isomorphism.
We show that the $\H$-module $L_n$ ($n\geq 1$) is a direct sum of two non-isomorphic irreducible $\H$-modules.
\end{abstract}

\maketitle

{\footnotesize{\bf Keywords:} Askey--Wilson relations, halved cubes, hypercubes, Lie algebras, Terwilliger algebras}

{\footnotesize{\bf MSC2020:} 05E30, 16G30, 16S30, 33D45}

\allowdisplaybreaks

\section{Introduction}\label{sec:intro}

Throughout this paper we adopt the following conventions: Let $\Z$ denote the ring of integers. Let $\N$ denote the set of all nonnegative integers. Let $\C$ denote the complex number field. An algebra is meant to be a unital associative algebra. A subalgebra has the same unit as the parent algebra. An algebra homomorphism is meant to be a unital algebra homomorphism. In an algebra the notation $[x,y]$ stands for the commutator
$
xy-yx.
$
Given any nonempty set $X$ let $\C^X$ denote the free vector space over $\C$ generated by $X$; in other words $\C^X$ can be regarded as a vector space over $\C$ that has the basis $X$.

\begin{defn}\label{defn:U}
The {\it universal enveloping algebra $\U$ of $\mathfrak{sl}_2$} is an algebra over $\C$ generated by $E,F,H$ subject to the relations
\begin{align}
& [H,E]=2E, \label{eq:U_relation_E}
\\
& [H,F]=-2F, \label{eq:U_relation_F}
\\
& [E,F]=H. \label{eq:U_relation_H}
\end{align}
\end{defn}
The element
\begin{equation}\label{U_casimir}
  \Lambda=EF+FE+\frac{H^2}{2}
\end{equation}
is central in $\U$ and it is called the {\it Casimir element} of $\U$.

\begin{defn}
\label{defn:H}
The {\it universal Hahn algebra} $\H$ is an algebra over $\C$ generated by $A,B,C$ and the relations assert that
\begin{equation}\label{R0}
  [A,B]=C
\end{equation}
and each of
\begin{align}
& [C,A]+2A^2+B,  \label{R1}
\\
&
[B,C]+4BA+2C \label{R2}
\end{align}
is central in $\H$.
\end{defn}
Let $\alpha$ and $\beta$ denote the elements (\ref{R1}) and (\ref{R2}) of $\H$ respectively.
By \cite[Equation 2.2]{gvz2014} the element
\begin{equation}\label{eq:Casimir_H}
  \Omega=4ABA+B^2-C^2-2\beta A+2(1-\alpha)B
\end{equation}
is central in $\H$ and it is called the {\it Casimir element} of $\H$.

The main result of \cite{Vidunas:2007} implies that the underlying space of the Leonard pair of Hahn type supports a finite-dimensional irreducible $\H$-module.
In \cite[Section 3]{Huang:CG&Johnson} a classification of finite-dimensional irreducible $\H$-modules is provided.
In \cite{Huang:CG&Johnson,Johnson:2021} the algebra $\H$ is connected to the Terwilliger algebra of the Johnson graph. Additionally, there are some applications of $\H$ to physics. Please see  \cite{Hahn:2019-1,Hahn:2019-2} for instance.

By investigating the relationship between the Terwilliger algebras of the hypercube and its halved graph, we find the following connection between $\U$ and $\H$:

\begin{thm}\label{thm:HtoU}
  There exists a unique algebra homomorphism $\natural:\H\rightarrow \U$ that sends
\begin{eqnarray}
A &\mapsto & \frac{H}{4}, \label{A}
\\
B & \mapsto & \frac{E^2+F^2+\Lambda-1}{4}-\frac{H^2}{8},\label{B}\\
C & \mapsto & \frac{E^2-F^2}{4},\label{C}\\
\alpha & \mapsto & \frac{\Lambda-1}{4},\label{alpha}\\
\beta & \mapsto & 0.\label{beta}
\end{eqnarray}
\end{thm}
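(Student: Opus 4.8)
The plan is to exploit the presentation of $\H$ by generators and relations. Since $A,B,C$ generate $\H$, any algebra homomorphism out of $\H$ is determined by the images of these three generators, which gives uniqueness at once. For existence it suffices to verify that the three proposed images
$$a=\tfrac{H}{4},\qquad b=\tfrac{E^2+F^2+\Lambda-1}{4}-\tfrac{H^2}{8},\qquad c=\tfrac{E^2-F^2}{4}$$
satisfy the defining relations of $\H$ inside $\U$: namely $[a,b]=c$, and that each of $[c,a]+2a^2+b$ and $[b,c]+4ba+2c$ is central in $\U$. Once this is done, the relations (\ref{R0})--(\ref{R2}) are respected, so $\natural$ exists.

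All verifications reduce to commutator bookkeeping in $\U$ built from a handful of base identities. From $[H,E]=2E$ and $[H,F]=-2F$ I would first record $[H,E^2]=4E^2$ and $[H,F^2]=-4F^2$. The relation $[a,b]=c$ then drops out immediately, since $[H,\Lambda]=[H,H^2]=0$ gives $[H,b]=E^2-F^2$ and hence $[a,b]=\tfrac14(E^2-F^2)=c$. For the first centrality relation I would compute $[c,a]=-\tfrac14[H,c]=-\tfrac14(E^2+F^2)$ and then add $2a^2=\tfrac{H^2}{8}$ and $b$; the $E^2+F^2$ and $H^2$ contributions cancel, leaving $[c,a]+2a^2+b=\tfrac{\Lambda-1}{4}$. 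As $\Lambda$ is central this element is central, and this computation simultaneously establishes (\ref{alpha}).

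The main obstacle is the second centrality relation, which must be shown to equal $0$, thereby proving (\ref{beta}). This requires the two cubic commutators $[E^2,F^2]$ and $[H^2,E^2-F^2]$. The key step is to expand $[E^2,F^2]$ using $[E,F]=H$ together with the weight relations, obtaining $[E^2,F^2]=4FEH+2H^2-2H$, alongside $[H^2,E^2-F^2]=8E^2H+16E^2+8F^2H-16F^2$. Substituting these into $[b,c]=\tfrac{1}{16}[E^2+F^2,E^2-F^2]-\tfrac{1}{32}[H^2,E^2-F^2]$ and then adding $4ba=bH$ and $2c$, every term quadratic in $E,F$ cancels, and one is left with $-\tfrac14 FEH$ plus terms involving $(\Lambda-1)H$, $H$, and $H^3$. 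The decisive cancellation is then produced by substituting $\Lambda=EF+FE+\tfrac{H^2}{2}$: the $H^3$ and the linear $H$ terms disappear, and the surviving piece collapses via $[E,F]=H$ to $\tfrac14(EF-FE)H-\tfrac14 H^2=\tfrac14 H^2-\tfrac14 H^2=0$. Hence $[b,c]+4ba+2c=0$, which is central, completing the verification of the relations and establishing both the existence of $\natural$ and the formula (\ref{beta}).
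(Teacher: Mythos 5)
Your proposal is correct and takes essentially the same route as the paper: uniqueness because $A,B,C$ generate $\H$, existence by checking the defining relations of Definition~\ref{defn:H} for the candidate images in $\U$ (with the images of $\alpha$ and $\beta$ dropping out of those same computations); the only cosmetic difference is that the paper evaluates $[B^\natural,C^\natural]$ via its closed forms for $E^2F^2$ and $F^2E^2$ in terms of $\Lambda$ and $H$ (Lemma~\ref{lem:E^nF^n,F^nE^n}), whereas you expand $[E^2,F^2]=4FEH+2H^2-2H$ directly and substitute $\Lambda=EF+FE+\tfrac{H^2}{2}$ only at the end. One bookkeeping slip: after the quadratic terms cancel, the surviving multiple of $FEH$ is $-\tfrac12 FEH$ (and a $-\tfrac14 H^2$ term also survives), not $-\tfrac14 FEH$; with these corrected, expanding $\tfrac14(\Lambda-1)H$ yields exactly your final collapse $\tfrac14(EF-FE)H-\tfrac14 H^2=0$, so the argument stands.
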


To be a $\Z$-graded algebra we can grade a free algebra by assigning arbitrary degrees to the generators. We set the degrees of $E,F,H$ to be $1,-1,0$ respectively. Since the elements corresponding to the relations (\ref{eq:U_relation_E})--(\ref{eq:U_relation_H}) are homogeneous of degrees $1,-1,0$, the two-sided ideal generated by these elements is homogeneous. Thus the factor algebra $\U$ inherits the $\Z$-grading.
For each $n\in\Z$ let $U_n$ denote the $n^{\rm th}$ homogeneous subspace of $\U$.
The subspaces $\{U_n\}_{n\in\Z}$ of $\U$ satisfy the following properties:
\begin{description}
  \item[(G1)] $\U=\bigoplus\limits_{n\in\Z}U_n$.
  \item[(G2)] $U_m\cdot U_n\subseteq U_{m+n}$ for all $m,n\in\Z$.
\end{description}

\begin{defn}\label{defn:Ue}
Define
$$
\U_e=\bigoplus\limits_{n\in\Z}U_{2n}.
$$
Since $1\in U_0$ and by {\bf (G2)} the space $\U_e$ is a subalgebra of $\U$.
\end{defn}

We determine the image of $\natural$ and characterize the kernel of $\natural$ as follows:

\begin{thm}\label{thm:image of natural}
\begin{enumerate}
\item ${\rm Im}\,\natural=\U_e$.

\item ${\rm Ker}\, \natural$ is the two-sided ideal of $\H$ generated by $\beta$ and $16 \Omega-24 \alpha+3$.
\end{enumerate}
\end{thm}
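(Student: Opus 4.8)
The plan is to show that $\natural$ induces an isomorphism $\bar\natural\colon\H/I\to\U_e$, where $I$ denotes the two-sided ideal generated by $\beta$ and $16\Omega-24\alpha+3$. First I would check the inclusion $I\subseteq\mathrm{Ker}\,\natural$. The relation $\natural(\beta)=0$ is Theorem~\ref{thm:HtoU}, so checking $I\subseteq\mathrm{Ker}\,\natural$ amounts to verifying $\natural(16\Omega-24\alpha+3)=0$; since $\natural(\beta)=0$ kills the term $-2\beta A$ in (\ref{eq:Casimir_H}), this is the single identity $\natural(\Omega)=\tfrac1{16}(6\Lambda-9)$, a direct computation using (\ref{A})--(\ref{alpha}). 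Thus $\natural$ factors through a homomorphism $\bar\natural\colon\H/I\to\U$, which by part~(1) is surjective onto $\U_e$. Everything then reduces to proving that $\bar\natural$ is injective.

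For injectivity I would first produce an explicit $\C$-spanning set of $\H/I$. Using $C=[A,B]$ and the centrality of $\alpha,\beta$, the defining relations give the straightening rules $BA=AB-C$, $\;CA=AC+\alpha-2A^2-B$, and $CB=BC+4AB-2C-\beta$, which let one sort any word in $A,B,C$ into the form $A^iB^jC^k$ with coefficients in the central subalgebra generated by $\alpha,\beta$. The crucial further reduction comes from the Casimir: solving (\ref{eq:Casimir_H}) for $C^2$ and straightening $4ABA=4A^2B-4AC$ yields
\begin{equation*}
C^2=4A^2B+B^2-4AC+(2-2\alpha)B-2\beta A-\Omega,
\end{equation*}
so every occurrence of $C^2$ lowers the $C$-degree. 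Hence $\H$ is spanned by $\{A^iB^jC^k: i,j\ge 0,\ k\in\{0,1\}\}$ over $\C[\alpha,\beta,\Omega]$. Passing to $\H/I$, where $\beta=0$ and $\Omega=\tfrac{3}{16}(8\alpha-1)$, I obtain the spanning set $\mathcal S=\{A^iB^jC^k\alpha^l: i,j,l\ge 0,\ k\in\{0,1\}\}$.

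It then suffices to prove that $\natural(\mathcal S)$ is linearly independent in $\U_e$, since a linear map carrying a spanning set to a linearly independent set is injective. Here I would pass to the associated graded of $\U$ for the PBW filtration, $\mathrm{gr}\,\U\cong\C[e,f,h]$, where $e,f,h$ are the symbols of $E,F,H$. Writing $\sigma(x)$ for the principal symbol of $\natural(x)$, a short computation gives $\sigma(A)=h/4$, $\;\sigma(B)=(e+f)^2/4$, $\;\sigma(C)=(e^2-f^2)/4$, and $\sigma(\alpha)=\tfrac18(4ef+h^2)$, each of top degree equal to the filtration degree of the image, so that the symbol of $\natural(A^iB^jC^k\alpha^l)$ is a nonzero scalar multiple of $\sigma(A)^i\sigma(B)^j\sigma(C)^k\sigma(\alpha)^l$. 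Setting $u=e+f$ and $v=e-f$, this symbol becomes a scalar multiple of $h^i\,u^{2j+k}\,v^k\,(u^2-v^2+h^2)^l$. The parity of $k$ is detected by the parity in $v$; for fixed $k$ the top power $v^{2l+k}$ isolates $l$, and then the monomial $h^iu^{2j+k}$ isolates $(i,j)$. Hence the symbols are linearly independent in $\C[e,f,h]$, and consequently $\natural(\mathcal S)$ is linearly independent in $\U$. This forces $\bar\natural$ to be injective, so $\mathrm{Ker}\,\natural=I$.

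The main obstacle I anticipate is the second step: justifying rigorously that the straightening rules together with the $C^2$-reduction terminate and yield a spanning set, i.e. establishing the PBW-type description of $\H$ over $\C[\alpha,\beta,\Omega]$ with $C$-degree at most one. If such a basis is already available from \cite{Huang:CG&Johnson}, this step shortens considerably. By contrast the first and third steps are routine computations in $\U$ and in the polynomial ring $\C[e,f,h]$, respectively.
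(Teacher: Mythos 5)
Your strategy for part (ii) is genuinely different from the paper's, and its novel ingredients check out. The paper proves injectivity of the induced map $\overline{\natural}:\H/\mathcal K\to \U_e$ by building an explicit inverse: it verifies (Lemmas \ref{lem:bar(HE^2,HF^2)} and \ref{lem:bar(E^2F^2,F^2E^2)}) that the cosets of $\widehat{E^2},\widehat{F^2},\widehat{\Lambda},\widehat{H}$ satisfy the defining relations (\ref{Ue1})--(\ref{Ue5}) of $\U_e$ modulo $\mathcal K$, invokes the presentation Theorem \ref{thm:Ue_generators} to get $\sharp:\U_e\to\H/\mathcal K$, and checks $\sharp\circ\overline{\natural}=1$ on generators; this completely avoids any normal form for $\H$. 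You instead propose a normal form (spanning set) for $\H/I$ and then show its image under $\natural$ is linearly independent via principal symbols in $\mathrm{gr}\,\U\cong\C[e,f,h]$. Your computations are all correct: the straightening rules, the $C^2$-reduction coming from (\ref{eq:Casimir_H}), the value $\natural(\Omega)=\frac{3}{16}(2\Lambda-3)$ (the paper's Lemma \ref{lem:image_omega}(ii)), and the symbols $\sigma(A)=h/4$, $\sigma(B)=(e+f)^2/4$, $\sigma(C)=(e^2-f^2)/4$, $\sigma(\alpha)=(4ef+h^2)/8$. Multiplicativity of $\sigma$ holds since $\C[e,f,h]$ is a domain, your $u,v$-argument does prove the polynomials $h^iu^{2j+k}v^k(u^2-v^2+h^2)^l$ are linearly independent, and independence of symbols legitimately implies independence of the elements (any dependence among the elements forces a dependence among the symbols of top filtration degree). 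A pleasant by-product of your route, not available from the paper's, is that $\{A^iB^jC^k\alpha^l:\,k\in\{0,1\}\}$ is then a PBW-type basis of $\H/I$, hence of $\U_e$.

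That said, two things are missing before this is a proof of the stated theorem. First, the spanning claim for $\H/I$ is the crux of your injectivity argument and is left unproven; you flag it yourself. It is true, but someone must actually carry out the termination argument (for instance, order words in $A,B,C$ by weighted degree with $\deg A=1$, $\deg B=2$, $\deg C=3$, then by length, then by inversion count, and check that each of the four rewriting rules $BA\to AB-C$, $CA\to AC+\alpha-2A^2-B$, $CB\to BC+4AB-2C-\beta$, $C^2\to\cdots$ strictly decreases this order), or cite a PBW basis for $\H$ if one exists in the literature; the paper's design specifically sidesteps this cost by paying instead for the presentation of $\U_e$, which it needs anyway. Second, part (i) of the theorem is never proven: you invoke it ("by part (1)") but give no argument for ${\rm Im}\,\natural=\U_e$. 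Note that your kernel computation does not actually require surjectivity, but part (i) is half of the statement; the paper obtains ${\rm Im}\,\natural\subseteq\U_e$ from the $\Z$-grading (Lemma \ref{lem:image_A,B,C,alpha,Omega}) and $\U_e\subseteq{\rm Im}\,\natural$ from the explicit preimages (\ref{eq:natural_preimage_E^2})--(\ref{eq:natural_preimage_H}) together with the fact that $E^2,F^2,\Lambda,H$ generate $\U_e$, and some such argument must be supplied in your write-up as well.
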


By pulling back via $\natural$ each $\U$-module can be regarded as an $\H$-module.  We now recall the finite-dimensional irreducible $\U$-modules.

\begin{lem}\label{lem:Vn_U}
For any $n\in\N$ there exists an $(n+1)$-dimensional $\U$-module $L_n$ satisfying the following conditions:
\begin{enumerate}
\item There exists a basis $\{v_i\}_{i=0}^n$ for $L_n$ such that
\begin{align*}
E v_i&=(n-i+1) v_{i-1}
\quad
(1\leq i\leq n),
\qquad
E v_0=0,
\\
F v_i&=(i+1) v_{i+1}
\quad
(0\leq i\leq n-1),
\qquad
F v_n=0,
\\
H v_i&=(n-2i) v_i
\quad
(0\leq i\leq n).
\end{align*}
\item The element $\Lambda$ acts on $L_n$ as scalar multiplication by $\frac{n(n+2)}{2}$.
\end{enumerate}
\end{lem}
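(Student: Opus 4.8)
The plan is to build $L_n$ by hand and then appeal to the universal property of $\U$ as an algebra defined by generators and relations. First I would take $L_n$ to be the $(n+1)$-dimensional complex vector space with basis $\{v_i\}_{i=0}^n$, adopting the convention $v_{-1}=v_{n+1}=0$ so that the boundary cases $Ev_0=0$ and $Fv_n=0$ are subsumed by the generic formulas. I then define three linear endomorphisms of $L_n$, which I again denote $E,F,H$, by the three families of formulas in part (i). Since $\U$ is presented by the generators $E,F,H$ subject to the relations (\ref{eq:U_relation_E})--(\ref{eq:U_relation_H}), it suffices to check that these endomorphisms satisfy those three relations; the assignment then extends uniquely to an algebra homomorphism $\U\rightarrow\End(L_n)$, equivalently to a $\U$-module structure on $L_n$, and the construction makes the dimension equal to $n+1$ transparently.

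The verification of the relations is a direct computation on the basis. I would evaluate each side of $[H,E]=2E$, $[H,F]=-2F$, and $[E,F]=H$ on a general $v_i$ and compare coefficients. For example, $HEv_i$ and $EHv_i$ are both scalar multiples of $v_{i-1}$, and the difference of their scalars is exactly $2(n-i+1)$, matching $2Ev_i$; the relation for $F$ is the mirror image; and $[E,F]v_i=\bigl((i+1)(n-i)-i(n-i+1)\bigr)v_i=(n-2i)v_i=Hv_i$. The only point needing care is to confirm that the conventions $v_{-1}=v_{n+1}=0$ keep these identities valid at the endpoints $i=0$ and $i=n$, which they do.

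For part (ii) I would compute the action of $\Lambda=EF+FE+\tfrac{H^2}{2}$ on $v_i$. Using $EFv_i=(i+1)(n-i)v_i$, $FEv_i=i(n-i+1)v_i$, and $H^2v_i=(n-2i)^2v_i$, the coefficient of $v_i$ simplifies, after the cross terms cancel, to $n+\tfrac{n^2}{2}=\tfrac{n(n+2)}{2}$, independently of $i$. Hence $\Lambda$ acts as this scalar on each basis vector and therefore on all of $L_n$.

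I do not expect a genuine obstacle: every step is a finite linear-algebra check, this being the classical construction of the finite-dimensional highest-weight $\mathfrak{sl}_2$-module. The only things to watch are the index arithmetic at the boundary when verifying the relations and the cancellation in the Casimir computation, both of which are routine.
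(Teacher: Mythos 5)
Your proposal is correct: the relations (\ref{eq:U_relation_E})--(\ref{eq:U_relation_H}) and the Casimir computation all check out, including the boundary cases handled by the convention $v_{-1}=v_{n+1}=0$. The paper itself offers no proof of this lemma (it is invoked as the classical, well-known construction of the finite-dimensional irreducible $\mathfrak{sl}_2$-modules), and your construction-plus-verification via the universal property of the presentation of $\U$ is exactly the standard argument that fills in this omission.
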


Observe that the $\U$-module $L_n$ $(n\in \N)$ is irreducible. For each $n\in \N$ it is well-known that every $(n+1)$-dimensional irreducible $\U$-module is isomorphic to $L_n$.

\begin{thm}\label{thm:decomposition_Ln}
\begin{enumerate}
\item The $\H$-module $L_0$ is irreducible.

\item For any integer $n\geq 1$ the $\H$-module $L_n$ is a direct sum of two non-isomorphic irreducible $\H$-modules.
\end{enumerate}
\end{thm}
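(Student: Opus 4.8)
The plan is to realize the claimed decomposition explicitly from the parity of the index in the basis $\{v_i\}_{i=0}^n$ of Lemma~\ref{lem:Vn_U}, and then to establish irreducibility of each piece by a ladder argument. First I would record the action of the images of $A,B,C$ under $\natural$ on this basis. Since $A\mapsto H/4$ we get $Av_i=\tfrac{n-2i}{4}v_i$, so $A$ acts diagonally; since $B$ and $C$ are built from $E^2,F^2,H^2$ and the scalar $\Lambda=\tfrac{n(n+2)}{2}$, each of $A,B,C$ sends $v_i$ into $\mathrm{span}\{v_{i-2},v_i,v_{i+2}\}$. Concretely, using $E^2v_i=(n-i+1)(n-i+2)v_{i-2}$ and $F^2v_i=(i+1)(i+2)v_{i+2}$, one finds
$$
Bv_i=\tfrac{(n-i+1)(n-i+2)}{4}\,v_{i-2}+\theta_i v_i+\tfrac{(i+1)(i+2)}{4}\,v_{i+2}
$$
for a scalar $\theta_i$, and a similar expression for $C$. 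The essential consequence is that every generator preserves the parity of the index, which is precisely the content of $\mathrm{Im}\,\natural=\U_e$ in Theorem~\ref{thm:image of natural}(i): an element of even degree shifts the index by an even amount. Hence
$$
L_n^{\mathrm{ev}}=\mathrm{span}\{v_i:i\ \mathrm{even}\},\qquad L_n^{\mathrm{odd}}=\mathrm{span}\{v_i:i\ \mathrm{odd}\}
$$
are $\H$-submodules and $L_n=L_n^{\mathrm{ev}}\oplus L_n^{\mathrm{odd}}$.

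For part (i), $L_0$ is one-dimensional and hence irreducible. For part (ii) with $n\geq 1$, both summands are nonzero, so it remains to prove that each is irreducible and that the two are non-isomorphic. For irreducibility I would argue on $L_n^{\mathrm{ev}}$ (the odd case being identical). The scalars $\tfrac{n-2i}{4}$ are pairwise distinct, so $A$ has $\dim L_n^{\mathrm{ev}}$ distinct eigenvalues on $L_n^{\mathrm{ev}}$; therefore any $\H$-submodule $W$, being $A$-invariant, is a direct sum of $A$-eigenlines and hence is spanned by a subset of $\{v_i:i\ \mathrm{even}\}$. If $v_i\in W$ then $Bv_i\in W$, and comparing $A$-eigenlines forces the vectors $\tfrac{(i+1)(i+2)}{4}v_{i+2}$ and $\tfrac{(n-i+1)(n-i+2)}{4}v_{i-2}$ to lie in $W$; since these coefficients are nonzero for every interior index, membership propagates from $v_i$ to $v_{i\pm2}$. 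Climbing the ladder in both directions yields $W=L_n^{\mathrm{ev}}$, so $L_n^{\mathrm{ev}}$ is irreducible, and the same computation applies to $L_n^{\mathrm{odd}}$.

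For non-isomorphism I would compare the spectra of $A$. On $L_n^{\mathrm{ev}}$ the eigenvalue $\tfrac{n}{4}$ occurs (from $i=0$), whereas the eigenvalues of $A$ on $L_n^{\mathrm{odd}}$ are $\tfrac{n-2i}{4}$ with $i$ odd, none of which equals $\tfrac{n}{4}$. Any $\H$-module isomorphism intertwines the action of $A$ and hence preserves its spectrum, so $L_n^{\mathrm{ev}}$ and $L_n^{\mathrm{odd}}$ cannot be isomorphic; when $n$ is even the two summands already have different dimensions, giving a second immediate reason.

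The step I expect to require the most care is the irreducibility argument, specifically checking that none of the off-diagonal entries of $B$ vanish at interior indices, so that the ladder actually connects every even- (resp. odd-) indexed basis vector; this reduces to the observation that $(i+1)(i+2)$ and $(n-i+1)(n-i+2)$ are nonzero in the relevant ranges, which is routine but is the one place where a degeneracy could in principle split a summand further. An alternative to this hands-on approach is to match $L_n^{\mathrm{ev}}$ and $L_n^{\mathrm{odd}}$ against the classification of finite-dimensional irreducible $\H$-modules in \cite[Section 3]{Huang:CG&Johnson}, reading off both irreducibility and the distinguishing parameters from the eigenvalues of $A$ together with the common central characters $\alpha\mapsto\tfrac{\Lambda-1}{4}$ and $\beta\mapsto 0$ supplied by Theorem~\ref{thm:HtoU}.
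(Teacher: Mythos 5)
Your proposal is correct, and at its core it produces the paper's own decomposition in different clothing: your $L_n^{\mathrm{ev}}$ and $L_n^{\mathrm{odd}}$ are exactly the subspaces $L_n^{(0)}=\bigoplus_{i\in\Z}L_n(n-4i)$ and $L_n^{(1)}=\bigoplus_{i\in\Z}L_n(n-4i-2)$ of Definition~\ref{defn:Ln0, Ln1}, and the numerical facts you check (nonvanishing of $(i+1)(i+2)$ and $(n-i+1)(n-i+2)$ in the relevant ranges, and the fact that $n$ is an eigenvalue of $H$ only on the even part) are the same ones that drive Lemmas~\ref{lem:Ln0 irreducible}, \ref{lem:Ln1 irreducible} and Theorem~\ref{thm:Ln0&Ln1 noniso}. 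Where you genuinely diverge is the logical route. The paper proves irreducibility and non-isomorphism of the summands as $\U_e$-modules (ladder with $E^2$ and $F^2$, spectrum of $H$, eigenvalue of $\Lambda$) and then transfers these statements to $\H$-modules by citing ${\rm Im}\,\natural=\U_e$; this transfer really does require the surjectivity of $\natural$ onto $\U_e$ --- an $\H$-submodule is a priori invariant only under ${\rm Im}\,\natural$ --- and hence rests on Theorem~\ref{thm:image of natural}(i) and, behind it, the presentation of Theorem~\ref{thm:Ue_generators}. You instead run the ladder directly with the operators $A^\natural$ (diagonal with simple spectrum) and $B^\natural$ (tridiagonal in the basis $\{v_i\}$, with off-diagonal entries coming from $E^2/4$ and $F^2/4$), so your argument needs only the explicit formulas of Theorem~\ref{thm:HtoU} and is independent of Sections~\ref{sec:Ue}--\ref{sec:im&ker}; it is more elementary and self-contained. (A minor nitpick: parity preservation follows from the inclusion ${\rm Im}\,\natural\subseteq\U_e$ alone, which is immediate from the formulas, so your appeal to the full equality of Theorem~\ref{thm:image of natural}(i) at that point is stronger than what you actually use.) What the paper's heavier route buys is reuse: the $\U_e$-module lemmas it establishes are precisely the input for the classification of finite-dimensional irreducible $\U_e$-modules (Theorem~\ref{thm:Ue-module}) and for the Terwilliger-algebra application (Corollary~\ref{cor:Te(x) structure}), and along the way it proves the stronger fact that the summands are pairwise non-isomorphic across all $n$, not merely within a single $L_n$.
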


The paper is organized as follows: In \S\ref{sec: HtoU} we prove Theorem \ref{thm:HtoU}. In \S\ref{sec:Ue} we give a presentation for $\U_e$.
In \S\ref{sec:im&ker} we give a proof for Theorem \ref{thm:image of natural}. In \S\ref{sec:Hmodule} we prove Theorem \ref{thm:decomposition_Ln} and classify the finite-dimensional irreducible $\U_e$-modules. In \S\ref{sec:Terwilliger lagebra} we explain the connection of our results with the Terwilliger algebras of the hypercube and its halved graph.

\section{Proof for Theorem \ref{thm:HtoU}}\label{sec: HtoU}

\begin{lem}\label{lem:homomorphism_rho}
There exists a unique algebra homomorphism $\rho:\U\rightarrow\U$ that sends
  \begin{eqnarray*}
    (E,F,H) &\mapsto & (F,E,-H).
  \end{eqnarray*}
Moreover $\rho$ satisfies the following properties:
\begin{enumerate}
\item $\rho^2=1$.

\item $\rho(\Lambda)=\Lambda$.

\item $\rho(U_n)=U_{-n}$ for each $n\in\Z$.
\end{enumerate}
\end{lem}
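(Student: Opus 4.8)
The plan is to lean on the universal property of the presented algebra $\U$ at every stage. For existence and uniqueness of $\rho$, I would recall that $\U$ is the quotient of the free algebra on $E,F,H$ by the two-sided ideal generated by the elements corresponding to (\ref{eq:U_relation_E})--(\ref{eq:U_relation_H}). Since $E,F,H$ generate $\U$, any algebra homomorphism is determined by its values on these three generators, which gives uniqueness at once. For existence I would check that the prescribed images $F,E,-H$ satisfy the three defining relations, so that the assignment factors through the ideal. Concretely, $[-H,F]=-[H,F]=2F$ realizes the image of (\ref{eq:U_relation_E}), while $[-H,E]=-[H,E]=-2E$ and $[F,E]=-[E,F]=-H$ realize the images of (\ref{eq:U_relation_F}) and (\ref{eq:U_relation_H}); all three identities hold in $\U$, so the map descends to a well-defined algebra homomorphism $\rho:\U\to\U$.

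For part (i) I would evaluate $\rho^2$ on generators: $\rho^2(E)=\rho(F)=E$, $\rho^2(F)=\rho(E)=F$, and $\rho^2(H)=\rho(-H)=H$. Since $\rho^2$ is an algebra homomorphism fixing a generating set, it is the identity on all of $\U$. For part (ii) I would apply $\rho$ directly to $\Lambda=EF+FE+\frac{H^2}{2}$, obtaining $FE+EF+\frac{(-H)^2}{2}$, which equals $\Lambda$ because the first two summands are merely transposed and $(-H)^2=H^2$.

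The only step needing genuine care is part (iii), and I expect the grading bookkeeping there to be the main obstacle (modest though it is). Using the $\Z$-grading described before Definition \ref{defn:Ue}, where $E,F,H$ carry degrees $1,-1,0$, a spanning monomial of $U_n$ is a word in $E,F,H$ whose number of $E$'s minus number of $F$'s is $n$. Since $\rho$ interchanges $E$ and $F$ and merely rescales $H$, it sends such a word to a scalar multiple of a word of degree $-n$, giving $\rho(U_n)\subseteq U_{-n}$. The reverse inclusion then follows formally from part (i): as $\rho$ is an involution it is bijective, and $\rho(U_{-n})\subseteq U_n$ yields $U_{-n}=\rho(\rho(U_{-n}))\subseteq\rho(U_n)\subseteq U_{-n}$, forcing $\rho(U_n)=U_{-n}$. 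Everything outside this degree count is immediate from the universal property.
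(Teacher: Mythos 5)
Your proposal is correct and follows essentially the same route as the paper: verify the defining relations for existence, use generation by $E,F,H$ for uniqueness and for (i), apply $\rho$ to the Casimir element for (ii), and for (iii) establish $\rho(U_n)\subseteq U_{-n}$ and then upgrade to equality by applying the involution from (i). The only difference is cosmetic—you spell out the degree count on monomials that the paper leaves as an ``observe''—so nothing further is needed.
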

\begin{proof}
It is routine to verify the existence of $\rho$ by using Definition~\ref{defn:U}. Since the algebra $\U$ is generated by $E,F,H$ the uniqueness of $\rho$ follows.

 (i): The algebra homomorphism $\rho^2$ fixes $E,F,H$.

 (ii): Apply $\rho$ to either side of (\ref{U_casimir}).

 (iii): Observe that
 \begin{gather}\label{eq:rho(Un)}
 \rho(U_n)\subseteq U_{-n}
 \qquad\hbox{for all $n\in\Z$}.
 \end{gather}
 Applying $\rho$ to either side of (\ref{eq:rho(Un)}) it follows from Lemma~\ref{lem:homomorphism_rho}(i) that
\begin{equation}\label{eq:rho(Un)'}
U_n\subseteq \rho(U_{-n})
\qquad\hbox{for all $n\in\Z$}.
\end{equation}
Lemma \ref{lem:homomorphism_rho}(iii) follows from (\ref{eq:rho(Un)}) and (\ref{eq:rho(Un)'}).
\end{proof}

Observe that
\begin{equation}\label{eq:U_identity}
  [xy,z]=x[y,z]+[x,z]y
\end{equation}
for any elements $x,y,z$ in an algebra.

\begin{lem}\label{lem:HE^n,HF^n}
For each $n\in \N$ the following equations hold in $\U$:
  \begin{enumerate}
    \item $[H,E^n]=2nE^n$.
    \item $[H,F^n]=-2nF^n$.
  \end{enumerate}
\end{lem}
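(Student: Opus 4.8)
The plan is to establish part~(i) by induction on $n$ and then deduce part~(ii) from it. The essential tool is that $[H,-]$ acts as a derivation on $\U$, which is exactly the content of the identity \eqref{eq:U_identity}: applying it (after rewriting $[H,E^n]=-[E^n,H]$ so that $H$ occupies the right-hand slot) yields the product rule $[H,E^n]=[H,E]E^{n-1}+E[H,E^{n-1}]$.

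For part~(i) the base case $n=0$ is the trivial identity $[H,1]=0$, and $n=1$ is the defining relation \eqref{eq:U_relation_E}. Assuming $[H,E^{n-1}]=2(n-1)E^{n-1}$ for some $n\geq 1$, I would substitute $[H,E]=2E$ from \eqref{eq:U_relation_E} together with the inductive hypothesis into the product rule above to obtain
\[
[H,E^n]=2E\cdot E^{n-1}+E\cdot 2(n-1)E^{n-1}=2nE^n,
\]
which closes the induction.

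For part~(ii) the cleanest route is to apply the algebra homomorphism $\rho$ of Lemma~\ref{lem:homomorphism_rho} to the identity just proved. Since $\rho(E)=F$ and $\rho(H)=-H$, the image of $[H,E^n]=2nE^n$ under $\rho$ is $[-H,F^n]=2nF^n$, that is $[H,F^n]=-2nF^n$; alternatively one may rerun the induction of part~(i) with \eqref{eq:U_relation_F} in place of \eqref{eq:U_relation_E}. I anticipate no genuine obstacle, as this is a routine $\mathfrak{sl}_2$ commutator computation; the only point demanding a little attention is arranging the commutators so that the derivation identity \eqref{eq:U_identity}, whose fixed argument sits on the right, can be applied in the intended form.
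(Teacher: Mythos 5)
Your proposal is correct and follows essentially the same route as the paper: part~(i) by induction using the derivation identity \eqref{eq:U_identity} (the paper applies it with $(x,y,z)=(E,E^{n-1},H)$ and then negates, which is the same computation), and part~(ii) by applying the homomorphism $\rho$ of Lemma~\ref{lem:homomorphism_rho} to part~(i).
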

\begin{proof}
 (i): We proceed by induction on $n$. There is nothing to prove when $n=0$. Suppose that $n\geq 1$.
        Applying (\ref{eq:U_identity}) with $(x,y,z)=(E,E^{n-1},H)$ yields that
        $$
        [E^n,H]=E[E^{n-1},H]+[E,H]E^{n-1}.
        $$
Lemma~\ref{lem:HE^n,HF^n}(i) follows by applying the induction hypothesis and (\ref{eq:U_relation_E}) to the above equation.

(ii): Lemma~\ref{lem:HE^n,HF^n}(ii) follows by applying $\rho$ to Lemma~\ref{lem:HE^n,HF^n}(i) and evaluating the resulting equation by Lemma \ref{lem:homomorphism_rho}.
\end{proof}

\begin{lem}\label{lem:H^2E^n,H^2F^n}
For each $n\in \N$ the following equations hold in $\U$:
  \begin{enumerate}
    \item $[H^2,E^n]=4n(H-n)E^n$.
    \item $[H^2,F^n]=-4n(H+n)F^n$.
  \end{enumerate}
\end{lem}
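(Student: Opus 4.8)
The plan is to reduce everything to the first-order commutators already recorded in Lemma~\ref{lem:HE^n,HF^n} by means of the derivation identity (\ref{eq:U_identity}). For part~(i) I would apply (\ref{eq:U_identity}) with $(x,y,z)=(H,H,E^n)$, which gives
$$
[H^2,E^n]=H[H,E^n]+[H,E^n]H.
$$
Substituting Lemma~\ref{lem:HE^n,HF^n}(i) into both commutators turns the right-hand side into $2n(HE^n+E^nH)$.

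The only point requiring a little care is that $HE^n+E^nH$ is not yet in the desired form, since the target $4n(H-n)E^n$ has all copies of $H$ collected on the left. To remedy this I would invoke Lemma~\ref{lem:HE^n,HF^n}(i) a second time, now read as $E^nH=HE^n-2nE^n$, so that $HE^n+E^nH=2(H-n)E^n$. Combining this with the previous display yields $[H^2,E^n]=4n(H-n)E^n$, which is part~(i).

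For part~(ii) I would avoid repeating the computation and instead apply the involution $\rho$ of Lemma~\ref{lem:homomorphism_rho} to part~(i). Since $\rho$ is an algebra homomorphism it commutes with the formation of commutators, and by construction $\rho(H)=-H$ and $\rho(E)=F$, whence $\rho(H^2)=H^2$ and $\rho(E^n)=F^n$. Applying $\rho$ to the equation $[H^2,E^n]=4n(H-n)E^n$ therefore produces $[H^2,F^n]=4n(-H-n)F^n=-4n(H+n)F^n$, which is exactly part~(ii).

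I expect no genuine obstacle here: both statements are short consequences of Lemma~\ref{lem:HE^n,HF^n}, and the single subtlety is the bookkeeping needed to rewrite the symmetric combination $HE^n+E^nH$ in the left-normalized form $2(H-n)E^n$ using the commutation relation. One could equally well establish part~(i) by a direct induction on $n$ mirroring the proof of Lemma~\ref{lem:HE^n,HF^n}, but routing the argument through the derivation identity is cleaner and makes the symmetry that drives part~(ii) transparent.
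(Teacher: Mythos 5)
Your proof is correct and takes essentially the same approach as the paper: part (i) applies (\ref{eq:U_identity}) with $(x,y,z)=(H,H,E^n)$ and simplifies via Lemma~\ref{lem:HE^n,HF^n}(i), and part (ii) transfers the result through the involution $\rho$ of Lemma~\ref{lem:homomorphism_rho}. The only difference is that you make explicit the bookkeeping step $HE^n+E^nH=2(H-n)E^n$, which the paper's proof leaves implicit under ``simplifying.''
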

\begin{proof}
(i): Lemma~\ref{lem:H^2E^n,H^2F^n}(i) follows by applying (\ref{eq:U_identity}) with $(x,y,z)=(H,H,E^n)$ and simplifying the resulting equation by using Lemma \ref{lem:HE^n,HF^n}(i).

(ii): Lemma~\ref{lem:H^2E^n,H^2F^n}(ii) follows by applying $\rho$ to Lemma~\ref{lem:H^2E^n,H^2F^n}(i) and evaluating the resulting equation by Lemma \ref{lem:homomorphism_rho}.
\end{proof}

\begin{lem}\label{lem:E^nF^n,F^nE^n}
For each integer $n\geq 1$ the following equations hold in $\U$:
  \begin{enumerate}
    \item $E^nF^n=\displaystyle\prod\limits_{i=1}^n\frac{2\Lambda-(H-2i+2)(H-2i)}{4}$.
    \item $F^nE^n=\displaystyle\prod\limits_{i=1}^n\frac{2\Lambda-(H+2i-2)(H+2i)}{4}$.
  \end{enumerate}
\end{lem}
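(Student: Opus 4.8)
The plan is to prove (i) by induction on $n$ and then obtain (ii) by applying the involution $\rho$ of Lemma~\ref{lem:homomorphism_rho}. For the base case $n=1$ I would combine the defining relation (\ref{eq:U_relation_H}), which gives $FE=EF-H$, with the expression (\ref{U_casimir}) for the Casimir element to get $\Lambda=2EF-H+\frac{H^2}{2}$; solving for $EF$ yields
\[
EF=\frac{2\Lambda-H(H-2)}{4},
\]
which is exactly the (single) $i=1$ factor on the right-hand side of (i).

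For the inductive step, assume (i) holds for $n-1$, and write $E^{n-1}F^{n-1}=P_{n-1}(H)$, where
\[
P_{n-1}(H)=\prod_{i=1}^{n-1}\frac{2\Lambda-(H-2i+2)(H-2i)}{4}
\]
is viewed as a polynomial in $H$ whose coefficients lie in the central subalgebra generated by $\Lambda$. I would then write $E^nF^n=E\,(E^{n-1}F^{n-1})\,F$ and push the leftmost $E$ to the right. The key tool is the intertwining relation $E\,g(H)=g(H-2)\,E$, valid for every polynomial $g$, which follows by iterating the identity $EH=(H-2)E$ coming from (\ref{eq:U_relation_E}); combined with the centrality of $\Lambda$ this gives $E\,P_{n-1}(H)=P_{n-1}(H-2)\,E$. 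Hence
\[
E^nF^n=P_{n-1}(H-2)\cdot EF=P_{n-1}(H-2)\cdot\frac{2\Lambda-H(H-2)}{4}.
\]

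It remains to recognize the right-hand side as the length-$n$ product. Substituting $H\mapsto H-2$ in the $i$th factor of $P_{n-1}$ turns $(H-2i+2)(H-2i)$ into $(H-2i)(H-2i-2)=(H-2(i+1)+2)(H-2(i+1))$, i.e.\ it realizes the index shift $i\mapsto i+1$. Thus $P_{n-1}(H-2)$ is precisely the product over $i=2,\dots,n$, while the extra factor $\frac{2\Lambda-H(H-2)}{4}$ supplies the missing $i=1$ term, giving $\prod_{i=1}^{n}\frac{2\Lambda-(H-2i+2)(H-2i)}{4}$ and completing the induction.

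Finally, part (ii) follows by applying $\rho$ to part (i). Since $\rho$ is an algebra homomorphism with $\rho(E)=F$ and $\rho(F)=E$, the left-hand side becomes $F^nE^n$; since $\rho(H)=-H$ and $\rho(\Lambda)=\Lambda$ by Lemma~\ref{lem:homomorphism_rho}, each factor is sent to $2\Lambda-(-H-2i+2)(-H-2i)=2\Lambda-(H+2i-2)(H+2i)$, which is the claimed formula. I expect the only delicate point to be the index bookkeeping in the inductive step—checking that $H\mapsto H-2$ implements exactly the shift $i\mapsto i+1$—but this becomes routine once the intertwining relation $E\,g(H)=g(H-2)\,E$ is in hand.
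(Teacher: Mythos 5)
Your proof is correct and follows essentially the same route as the paper: induction on $n$ with the base case $EF=\frac{2\Lambda-H(H-2)}{4}$ extracted from the Casimir element via $FE=EF-H$, and part (ii) obtained by applying the involution $\rho$ of Lemma~\ref{lem:homomorphism_rho}. The only difference is organizational: you push a single $E$ past the entire inductive-hypothesis product using the intertwining rule $E\,g(H)=g(H-2)\,E$, so the new factor appears at index $i=1$ after the shift, whereas the paper moves $E^{n-1}$ past $EF$ using Lemmas~\ref{lem:HE^n,HF^n} and \ref{lem:H^2E^n,H^2F^n} and so produces the new factor at index $i=n$ before appending $F^{n-1}$; these are equivalent uses of the same commutation relation.
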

\begin{proof}
 (i): We proceed by induction on $n$. By (\ref{eq:U_relation_H}) we have $FE=EF-H$. Substituting this into (\ref{U_casimir}) yields that
Lemma~\ref{lem:E^nF^n,F^nE^n}(i) holds for $n=1$.  Suppose that $n\geq 2$.
Observe that
        \begin{equation}\label{eq:E^nF^n}
        E^nF=E^{n-1}(EF)=E^{n-1}\left(\frac{\Lambda +H}{2}-\frac{H^2}{4}\right).
        \end{equation}
        Applying Lemmas~\ref{lem:HE^n,HF^n}(i) and \ref{lem:H^2E^n,H^2F^n}(i) to (\ref{eq:E^nF^n}) yields that
        $$
        E^nF=\frac{2\Lambda-(H-2n+2)(H-2n)}{4}E^{n-1}.
        $$
Lemma~\ref{lem:E^nF^n,F^nE^n}(i) follows by right multiplying the above equation by $F^{n-1}$ and applying the induction hypothesis to the resulting equation.

 (ii): Lemma~\ref{lem:E^nF^n,F^nE^n}(ii) follows by applying $\rho$ to Lemma~\ref{lem:E^nF^n,F^nE^n}(i) and evaluating the resulting equation by Lemma \ref{lem:homomorphism_rho}.
\end{proof}

\noindent {\it Proof of Theorem \ref{thm:HtoU}.}
Let $A^{\natural},B^{\natural},C^{\natural},{\alpha}^{\natural},{\beta}^{\natural}$ denote the right-hand sides of (\ref{A})--(\ref{beta}) respectively. Observe that
\begin{align}
   & [A^{\natural},B^{\natural}]=\frac{[H,E^2+F^2]}{16}, \label{eq:A'B'} \\
   & [C^{\natural},A^{\natural}]=\frac{[H,F^2-E^2]}{16}, \label{eq:A'C'} \\
   & [B^{\natural},C^{\natural}]=\frac{[F^2,E^2]}{8}+\frac{[H^2,F^2-E^2]}{32}. \label{eq:C'B'}
\end{align}
Applying Lemma \ref{lem:HE^n,HF^n} to (\ref{eq:A'B'}) and (\ref{eq:A'C'}) yields that
\begin{align*}
[A^{\natural},B^{\natural}]
&=\frac{E^2-F^2}{4}
=C^\natural,
\\
[C^{\natural},A^{\natural}]
&=-\frac{E^2+F^2}{4}=-2{A^{\natural}}^2-B^{\natural}+\alpha^\natural.
\end{align*}
Applying Lemmas \ref{lem:HE^n,HF^n}--\ref{lem:E^nF^n,F^nE^n} to (\ref{eq:C'B'}) yields that
\begin{align*}
[B^{\natural},C^{\natural}]
=\frac{(1-E^2-F^2-\Lambda) H}{4}
+\frac{H^3}{8}
+\frac{F^2-E^2}{2}
=-4B^{\natural}A^{\natural}-2C^{\natural}+\beta^\natural.
\end{align*}
By Definition \ref{defn:H} the existence of $\natural$ follows. Since the algebra $\H$ is generated by $A,B,C$ the uniqueness of $\natural$ follows.
\hfill $\square$

\section{A presentation for $\U_e$}\label{sec:Ue}

\begin{lem}\label{lem:Un_basis_Poincare}
  For each $n\in\Z$ the elements
  $$
  E^iF^jH^k\qquad\hbox{for all $i,j,k\in\N$ with $i-j=n$}
  $$
  are a basis for $U_n$.
\end{lem}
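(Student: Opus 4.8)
The plan is to deduce the claim from the Poincar\'e--Birkhoff--Witt (PBW) theorem together with the $\Z$-grading. The key observation is that each monomial $E^iF^jH^k$ is homogeneous of degree $i-j$, since the degrees of $E,F,H$ are $1,-1,0$; hence $E^iF^jH^k\in U_{i-j}$. So it suffices to prove that the full collection $\{E^iF^jH^k:i,j,k\in\N\}$ is a basis for $\U$, and then to invoke the general fact that a homogeneous basis of a $\Z$-graded vector space restricts to a basis of every homogeneous component. Granting this, the basis elements lying in $U_n$ are exactly those with $i-j=n$, and the lemma follows from \textbf{(G1)}.

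First I would establish spanning. The three defining relations can be recast as the rewriting rules
\begin{align*}
FE=EF-H, \qquad HE=EH+2E, \qquad HF=FH-2F,
\end{align*}
each of which moves an $E$ to the left of an $F$ or $H$, or an $F$ to the left of an $H$, at the cost of terms of strictly lower total length. A routine induction (on total length and on the number of adjacent out-of-order pairs) shows that repeatedly applying these rules rewrites an arbitrary word in $E,F,H$ as a $\C$-linear combination of the ordered monomials $E^iF^jH^k$; thus these monomials span $\U$.

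The main obstacle is their linear independence, which is exactly the content of the PBW theorem and is the only nonformal ingredient. I would obtain it either by citing the classical PBW theorem for $\U$, or, for a self-contained argument, by checking confluence of the rewriting system above via Bergman's diamond lemma: the only overlap ambiguity is the word $HFE$, and reducing it in the two possible orders both yield $EFH-H^2$, so the system is confluent and the ordered monomials are linearly independent. A third option is to exhibit a module on which the operators $E^iF^jH^k$ act by linearly independent linear transformations.

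With a basis $\{E^iF^jH^k:i,j,k\in\N\}$ of $\U$ in hand, the $\Z$-grading $\U=\bigoplus_{n\in\Z}U_n$ forces, for each $n$, the homogeneous component $U_n$ to have as basis exactly the monomials of degree $n$, namely $E^iF^jH^k$ with $i-j=n$. This completes the proof.
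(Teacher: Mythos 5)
Your proposal is correct and takes essentially the same route as the paper, whose entire proof is ``Immediate from Poincar\'e--Birkhoff--Witt theorem'': you simply make explicit the two facts that proof leaves implicit, namely that each monomial $E^iF^jH^k$ is homogeneous of degree $i-j$ and that a homogeneous basis of a $\Z$-graded space restricts to a basis of each homogeneous component. Your optional diamond-lemma verification (the single overlap $HFE$ resolving to $EFH-H^2$ both ways) is also correct, but it is an add-on rather than a different approach.
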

\begin{proof}
  Immediate from Poincar\'{e}--Birkhoff--Witt theorem.
\end{proof}

\begin{lem}\label{lem:Un_basis}
For all $n\in\N$ the following hold:
\begin{enumerate}
  \item The elements
$$
E^{n}\Lambda^iH^k\qquad\hbox{for all $i,k\in\N$}
$$
are a basis for $U_n$.

  \item The elements
$$
F^{n}\Lambda^iH^k\qquad\hbox{for all $i,k\in\N$}
$$
are a basis for $U_{-n}$.
\end{enumerate}
\end{lem}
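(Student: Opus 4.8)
The plan is to exhibit the set $\{E^n\Lambda^iH^k:i,k\in\N\}$ as the image of the Poincar\'e--Birkhoff--Witt basis $\{E^{n+j}F^jH^k:j,k\in\N\}$ of $U_n$ (Lemma~\ref{lem:Un_basis_Poincare}) under a change of basis that is triangular with nonzero diagonal relative to a suitable filtration of $U_n$; both the spanning and the linear independence will then drop out of the standard fact that a filtered family whose symbols form a basis of the associated graded space is itself a basis.

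First I would filter $U_n$ by $F$-degree: for each $i\in\N$ let $\mathcal{F}_i$ be the span of the PBW monomials $E^{n+j}F^jH^k$ with $j\le i$. The defining relations show that $F$-degree is subadditive under multiplication (reordering a product into PBW form never raises the $F$-count), so $\{\mathcal{F}_i\}_{i\in\N}$ is an exhaustive increasing algebra filtration of $U_n$, and by Lemma~\ref{lem:Un_basis_Poincare} the graded piece $\mathcal{F}_i/\mathcal{F}_{i-1}$ has as a basis the images of $\{E^{n+i}F^iH^k:k\in\N\}$.

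The crux is the computation that, modulo lower $F$-degree,
\[
E^n\Lambda^iH^k\equiv 2^i\,E^{n+i}F^iH^k \pmod{\mathcal{F}_{i-1}}.
\]
This follows from Lemma~\ref{lem:E^nF^n,F^nE^n}(i): that lemma writes $E^iF^i$ as a polynomial in $\Lambda$ and $H$ whose leading term in $\Lambda$ is $2^{-i}\Lambda^i$, and since $\Lambda^{i'}$ has $F$-degree at most $i'$ the remaining terms lie in lower $F$-degree; inverting gives $\Lambda^i=2^iE^iF^i$ plus terms of $F$-degree $<i$. Left multiplication by $E^n$ and right multiplication by $H^k$ do not raise the $F$-degree, which yields the displayed congruence. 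Hence each $E^n\Lambda^iH^k$ lies in $\mathcal{F}_i$ and its symbol is $2^i$ times the basis vector $\overline{E^{n+i}F^iH^k}$ of $\mathcal{F}_i/\mathcal{F}_{i-1}$.

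Since $2^i\neq 0$, the symbols $\{\overline{E^n\Lambda^iH^k}\}_{i,k}$ run over a basis of $\bigoplus_i\mathcal{F}_i/\mathcal{F}_{i-1}$, so $\{E^n\Lambda^iH^k\}_{i,k}$ is a basis of $U_n$; this proves (i). For (ii) I would apply the involution $\rho$ of Lemma~\ref{lem:homomorphism_rho}: it maps $U_n$ onto $U_{-n}$ and sends $E^n\Lambda^iH^k$ to $(-1)^kF^n\Lambda^iH^k$, so it carries the basis from (i) to $\{F^n\Lambda^iH^k\}$ up to signs, which is therefore a basis of $U_{-n}$. I expect the main obstacle to be the triangularity computation: correctly identifying the $F$-degree filtration as the one that makes the change of basis unitriangular (up to the scalars $2^i$) and verifying, via Lemma~\ref{lem:E^nF^n,F^nE^n}(i), that the lower-order terms genuinely drop in $F$-degree for every $k$ simultaneously.
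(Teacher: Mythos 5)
Your proposal is correct and follows essentially the same route as the paper: the paper's proof uses exactly your filtration $U_n^{(i)}={\rm span}\{E^{n+j}F^jH^k : j\le i\}$, establishes the same congruence $E^n\Lambda^iH^k\equiv 2^iE^{n+i}F^iH^k \pmod{U_n^{(i-1)}}$ via Lemma~\ref{lem:E^nF^n,F^nE^n}(i) (phrased as an induction on $i$ rather than in associated-graded language), and deduces (ii) by the same application of $\rho$ from Lemma~\ref{lem:homomorphism_rho}.
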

\begin{proof}
(i): Let $n\in\N$ be given. For each $i\in \N$ let $U_n^{(i)}$ denote the subspace of $U_n$ spanned by
$E^{n+j} F^j H^k$ for all $j,k\in \N$ with $j\leq i$.
To see Lemma \ref{lem:Un_basis}(i)  it suffices to show that $U_n^{(i)}$ $(i\in \N)$ has the basis
\begin{gather}\label{U_n^i:basis}
E^n \Lambda^j H^k
\qquad
\hbox{for all $j,k\in \N$ with $j\leq i$}.
\end{gather}
To see this we proceed by induction on $i$. There is nothing to prove when $i=0$. Suppose that $i\geq 1$. By construction the quotient space $U_n^{(i)}/U_n^{(i-1)}$ has the basis
$
E^{n+i}F^i H^k+U_n^{(i-1)}
$
for all $k\in \N$.
It follows from Lemma \ref{lem:E^nF^n,F^nE^n}(i) and the induction hypothesis that
\begin{gather*}
E^{n}\Lambda^i H^k+U_n^{(i-1)}=2^i E^{n+i}F^i H^k+U_n^{(i-1)}
\qquad
\hbox{for all $k\in \N$}
\end{gather*}
are a basis for $U_n^{(i)}/U_n^{(i-1)}$.
Combined with the induction hypothesis the elements (\ref{U_n^i:basis}) form a basis for $U_n^{(i)}$.

(ii):
Let $n\in\N$ be given.
By Lemma \ref{lem:homomorphism_rho} the algebra homomorphism $\rho$ maps $E^n\Lambda^iH^k$ to $(-1)^kF^n\Lambda^iH^k$ for all $i,k\in\N$.
From Lemma \ref{lem:homomorphism_rho}(i), (iii)  we see that the map $\rho|_{U_n}:U_n\to U_{-n}$ is a linear isomorphism.
Lemma~\ref{lem:Un_basis}(ii) follows by Lemma~\ref{lem:Un_basis}(i) and the above comments.
\end{proof}

Recall the subalgebra $\U_e$ of $\U$ from Definition~\ref{defn:Ue}.

\begin{lem}\label{lem:Ue_basis}
The elements
  \begin{align}
   E^{2n}\Lambda^i H^k \qquad &\hbox{for all integers $n\geq 1$ and all $i,k\in\N$};
   \label{eq:Ue_basis_1}
   \\
  \Lambda^i H^k \qquad &\hbox{for all $i,k\in \N$};
  \label{eq:Ue_basis_2}
  \\
  F^{2n}\Lambda^iH^k\qquad& \hbox{for all integers $n\geq 1$ and all $i,k\in\N$}
  \label{eq:Ue_basis_3}
  \end{align}
  form a basis for $\U_e$.
\end{lem}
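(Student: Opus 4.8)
The plan is to deduce this directly from the grading of $\U_e$ together with Lemma~\ref{lem:Un_basis}, so this is really an assembly step rather than a computation. By Definition~\ref{defn:Ue} we have $\U_e=\bigoplus_{n\in\Z}U_{2n}$, and a basis for a direct sum of subspaces is obtained by taking the union of bases for the summands. I would therefore organize the even-degree homogeneous components into three families according to the sign of the index: the single component $U_0$, the positive components $U_{2n}$ with $n\geq 1$, and the negative components $U_{-2n}$ with $n\geq 1$. Since every even integer is either $0$, or equal to $2n$ for a unique integer $n\geq 1$, or equal to $-2n$ for a unique integer $n\geq 1$, these three families are pairwise disjoint and jointly exhaust $\{U_{2n}\}_{n\in\Z}$.

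Next I would supply a basis for each component using Lemma~\ref{lem:Un_basis}. Applying Lemma~\ref{lem:Un_basis}(i) with grading $2n$ shows that the elements $E^{2n}\Lambda^iH^k$ (for all $i,k\in\N$) form a basis for $U_{2n}$, giving the family (\ref{eq:Ue_basis_1}). Applying Lemma~\ref{lem:Un_basis}(ii) with grading $2n$ shows that the elements $F^{2n}\Lambda^iH^k$ (for all $i,k\in\N$) form a basis for $U_{-2n}$, giving the family (\ref{eq:Ue_basis_3}). For the remaining component $U_0$, applying Lemma~\ref{lem:Un_basis}(i) with $n=0$ and using $E^0=1$ shows that the elements $\Lambda^iH^k$ (for all $i,k\in\N$) form a basis for $U_0$, giving the family (\ref{eq:Ue_basis_2}).

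Since $\U_e$ is the direct sum of the subspaces $\{U_{2n}\}_{n\in\Z}$ and the listed families constitute bases for exactly these summands, their union is a basis for $\U_e$, which is the claim. There is no substantial obstacle here; the only point requiring a little care is bookkeeping, namely confirming that the three families are disjoint and together account for every even-degree component, which I have arranged via the partition of the even integers described above.
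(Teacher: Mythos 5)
Your proof is correct and is exactly the argument the paper has in mind: the paper's own proof reads ``Immediate from Lemma~\ref{lem:Un_basis},'' and what you have done is spell out that immediacy---decomposing $\U_e=\bigoplus_{n\in\Z}U_{2n}$ and supplying a basis for each summand from Lemma~\ref{lem:Un_basis}(i) (for $U_{2n}$, $n\geq 0$) and Lemma~\ref{lem:Un_basis}(ii) (for $U_{-2n}$, $n\geq 1$). No gap; your version simply makes the bookkeeping explicit.
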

\begin{proof}
Immediate from Lemma~\ref{lem:Un_basis}.
\end{proof}

\begin{thm}\label{thm:Ue_generators}
  The algebra $\U_e$ has a presentation given by generators $E^2,F^2,\Lambda,H$ and relations
  \begin{align}
      [H,E^2]&=4E^2,
     \label{Ue1}\\
      [H,F^2]&=-4F^2,
     \label{Ue2}\\
      16 E^2F^2&=(H^2-2H-2\Lambda)(H^2-6H-2\Lambda+8),
      \label{Ue3} \\
      16 F^2E^2&=(H^2+2H-2\Lambda)(H^2+6H-2\Lambda+8),
     \label{Ue4}\\
      \Lambda E^2&=E^2\Lambda,\qquad \Lambda F^2=F^2\Lambda,\qquad \Lambda H=H\Lambda.
     \label{Ue5}
  \end{align}
\end{thm}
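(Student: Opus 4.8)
The plan is to establish the two halves of a presentation. Let $\mathcal{A}$ denote the algebra over $\C$ with generators $x,y,\lambda,h$ subject to the relations obtained from (\ref{Ue1})--(\ref{Ue5}) upon replacing $E^2,F^2,\Lambda,H$ by $x,y,\lambda,h$. The theorem amounts to saying that the assignment $x\mapsto E^2$, $y\mapsto F^2$, $\lambda\mapsto\Lambda$, $h\mapsto H$ extends to an algebra isomorphism $\mathcal{A}\to\U_e$.

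First I would check that $E^2,F^2,\Lambda,H$ genuinely satisfy (\ref{Ue1})--(\ref{Ue5}) inside $\U$: relations (\ref{Ue1}) and (\ref{Ue2}) are Lemma~\ref{lem:HE^n,HF^n} at $n=2$; relations (\ref{Ue3}) and (\ref{Ue4}) follow from Lemma~\ref{lem:E^nF^n,F^nE^n} at $n=2$ after expanding the two factors of the product and absorbing an overall sign; and (\ref{Ue5}) is just the centrality of $\Lambda$ in $\U$. Because $E^2,F^2,\Lambda,H$ generate $\U_e$ — indeed every member of the basis of Lemma~\ref{lem:Ue_basis} is a monomial in them — this yields a surjective algebra homomorphism $\phi\colon\mathcal{A}\to\U_e$.

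The substance of the proof is the injectivity of $\phi$, which I would obtain by showing that $\mathcal{A}$ is spanned by the monomials $x^n\lambda^i h^k$ ($n\geq 1$), $\lambda^i h^k$, and $y^n\lambda^i h^k$ ($n\geq 1$), mirroring the basis (\ref{eq:Ue_basis_1})--(\ref{eq:Ue_basis_3}). Let $W$ be the span of these monomials in $\mathcal{A}$. Since $1=\lambda^0 h^0\in W$ and $\mathcal{A}$ is generated by $x,y,\lambda,h$, it suffices to prove that $W$ is closed under left multiplication by each generator; then $W$ is a left ideal containing $1$, forcing $W=\mathcal{A}$. Closure under $\lambda$ is immediate from its centrality. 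Closure under $h$ uses the commutation rules $h x^n=x^n(h+4n)$ and $h y^n=y^n(h-4n)$, which follow by induction from (\ref{Ue1}) and (\ref{Ue2}) and let one push $h$ to the right of any power. The only cases needing the quadratic relations are $x\cdot(y^n\lambda^i h^k)$ and $y\cdot(x^n\lambda^i h^k)$: here I would rewrite the offending adjacency $xy$ (resp. $yx$) as a polynomial $f(\lambda,h)$ (resp. $g(\lambda,h)$) via (\ref{Ue3}) (resp. (\ref{Ue4})), then commute this polynomial back through the surviving power of $y$ (resp. $x$) using the shift rules $p(h)y^m=y^m p(h-4m)$ (resp. $p(h)x^m=x^m p(h+4m)$), landing in $W$. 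The exponent of the power drops by one, so when $n=1$ the outcome falls into the middle family $\lambda^i h^k$ and otherwise remains of the same pure type.

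Finally, since $\phi$ carries the spanning set of $W$ onto the basis of $\U_e$ furnished by Lemma~\ref{lem:Ue_basis}, linear independence downstairs forces linear independence of the monomials upstairs; hence they are a basis of $\mathcal{A}$ and $\phi$ is an isomorphism. I expect the main obstacle to be the bookkeeping in the two mixed cases, where one must verify that after substituting the degree-four polynomial from (\ref{Ue3}) or (\ref{Ue4}) and shifting it past the remaining power, the result genuinely collapses into the three prescribed families rather than reintroducing mixed words. The key point is that each such rewriting step strictly decreases the total number of $x$-and-$y$ letters in the word, so the reduction terminates and no word survives that alternates between $x$ and $y$.
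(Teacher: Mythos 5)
Your proposal is correct and follows essentially the same route as the paper: both build the abstractly presented algebra, get the surjection onto $\U_e$ from Lemmas~\ref{lem:HE^n,HF^n} and \ref{lem:E^nF^n,F^nE^n}, and prove injectivity by showing the three monomial families span the abstract algebra via a left-ideal argument (your explicit shift rules $hx^n=x^n(h+4n)$ and $p(h)y^m=y^mp(h-4m)$ are just a reformulation of the paper's inclusions (\ref{HUe+})--(\ref{F2Ue+})), then transfer linear independence from the basis of Lemma~\ref{lem:Ue_basis}.
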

\begin{proof}
Let $\U_e^{'}$ denote the algebra generated by the symbols $E^2,F^2,\Lambda,H$ subject to the relations (\ref{Ue1})--(\ref{Ue5}). By Lemmas \ref{lem:HE^n,HF^n}, \ref{lem:E^nF^n,F^nE^n} and since $\Lambda$ is central in $\U$,
the algebra $\U_e$ satisfies the relations (\ref{Ue1})--(\ref{Ue5}). Thus there exists a unique algebra homomorphism $\Phi:\U_e^{'}\to \U_e$ that sends
\begin{eqnarray*}
(E^2,F^2,\Lambda,H)
&\mapsto&
(E^2,F^2,\Lambda,H).
\end{eqnarray*}
By Lemma \ref{lem:Ue_basis} the elements (\ref{eq:Ue_basis_1})--(\ref{eq:Ue_basis_3}) of $\U_e^{'}$ are linearly independent.
To prove that $\Phi$ is an isomorphism, it remains to show the elements (\ref{eq:Ue_basis_1})--(\ref{eq:Ue_basis_3}) of $\U_e^{'}$ span $\U_e^{'}$.

Let $\U_{e,+}^{'}$ denote the subspace of $\U_{e}^{'}$ spanned by (\ref{eq:Ue_basis_1}).
Let $\U_{e,0}^{'}$ denote the subspace of $\U_{e}^{'}$ spanned by (\ref{eq:Ue_basis_2}).
Let $\U_{e,-}^{'}$ denote the subspace of $\U_{e}^{'}$ spanned by (\ref{eq:Ue_basis_3}). Observe that
\begin{align}
E^2 \U_{e,+}^{'} &\subseteq \U_{e,+}^{'},
\qquad
E^2 \U_{e,0}^{'} \subseteq \U_{e,+}^{'},
\label{E2Ue+}
\\
F^2 \U_{e,-}^{'} &\subseteq \U_{e,-}^{'},
\qquad
F^2 \U_{e,0}^{'} \subseteq \U_{e,-}^{'}.
\label{F2Ue-}
\end{align}
It follows from (\ref{Ue5}) that
\begin{gather}
H  \U_{e,0}^{'} \subseteq \U_{e,0}^{'},
\label{HUe0}
\\
\Lambda \U_{e,+}^{'} \subseteq \U_{e,+}^{'},
\quad
\Lambda \U_{e,0}^{'} \subseteq \U_{e,0}^{'},
\quad
\Lambda \U_{e,-}^{'} \subseteq \U_{e,-}^{'}.
\label{LUe}
\end{gather}
We claim that
\begin{align}
H \U_{e,+}^{'} &\subseteq \U_{e,+}^{'},
\label{HUe+}
\\
H \U_{e,-}^{'} &\subseteq \U_{e,-}^{'},
\label{HUe-}
\\
E^2  \U_{e,-}^{'} &\subseteq \U_{e,0}^{'}+\U_{e,-}^{'},
\label{E2Ue-}
\\
F^2  \U_{e,+}^{'} &\subseteq \U_{e,0}^{'}+\U_{e,+}^{'}.
\label{F2Ue+}
\end{align}
To see (\ref{HUe+}) it suffices to show that
\begin{gather}\label{HUe}
HE^{2n}\Lambda^i H^k\in \U_{e,+}^{'}
\end{gather}
for all integers $n\geq 1$ and all $i,k\in \N$.
We proceed by induction on $n$. Using (\ref{Ue1}) yields that the left-hand side of (\ref{HUe}) is equal to
$$
E^2HE^{2n-2}\Lambda^i H^k+4E^{2n}\Lambda^i H^k.
$$
Clearly the second summand is in $\U_{e,+}^{'}$.
If $n=1$ then the first summand $E^2H\Lambda^i H^k\in \U_{e,+}^{'}$ by (\ref{Ue5}). Hence (\ref{HUe}) holds for $n=1$. If $n\geq 2$ then the first summand is in $\U_{e,+}^{'}$ by (\ref{E2Ue+}) and the induction hypothesis. Therefore (\ref{HUe+}) follows. By a similar argument (\ref{HUe-}) follows.
We now show (\ref{E2Ue-}).
Let $n\geq 1$ be an integer and $i,k\in \N$.
Using (\ref{Ue3}) yields that $E^2 F^{2n}\Lambda^i H^k$ is equal to
\begin{gather}\label{E2F2LH}
\frac{(H^2-2H-2\Lambda)(H^2-6H-2\Lambda+8)}{16}F^{2n-2}\Lambda^i H^k.
\end{gather}
If $n=1$ then (\ref{E2F2LH}) is in $\U_{e,0}^{'}$ by (\ref{Ue5}). If $n\geq 2$ then (\ref{E2F2LH}) is in $\U_{e,-}^{'}$ by (\ref{LUe}) and (\ref{HUe-}). Therefore (\ref{E2Ue-}) holds. By a similar argument (\ref{F2Ue+}) follows. By (\ref{E2Ue+})--(\ref{F2Ue+}) the space $\U_{e,+}^{'}+\U_{e,0}^{'}+\U_{e,-}^{'}$  is a left ideal of $\U_{e}^{'}$. Since $1\in \U_{e,0}^{'}$ it follows that $\U_{e,+}^{'}+\U_{e,0}^{'}+\U_{e,-}^{'}=\U_{e}^{'}$. In other words $\U_{e}^{'}$ is spanned by (\ref{eq:Ue_basis_1})--(\ref{eq:Ue_basis_3}). The result follows.
\end{proof}

\section{Proof for Theorem \ref{thm:image of natural}}\label{sec:im&ker}

Define
\begin{align}
\widehat{E^2}&=4A^2+2B+2C-2\alpha,
\label{eq:natural_preimage_E^2}
\\
\widehat{F^2}&=4A^2+2B-2C-2\alpha,
 \label{eq:natural_preimage_F^2}
\\
\widehat{\Lambda}&=1+4\alpha,
\label{eq:natural_preimage_Lambda}
\\
\widehat{H}&=4A.
\label{eq:natural_preimage_H}
\end{align}

\begin{lem}
\label{lem:bar(E2,F2,L,H)}
The images of
$\widehat{E^2},\widehat{F^2},\widehat{\Lambda},\widehat{H}$
under $\natural$ are equal to $E^2,F^2,\Lambda,H$ respectively.
\end{lem}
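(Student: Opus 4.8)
The plan is to verify the four identities directly by applying the homomorphism $\natural$ to the right-hand sides of (\ref{eq:natural_preimage_E^2})--(\ref{eq:natural_preimage_H}) and simplifying using the explicit images (\ref{A})--(\ref{beta}) recorded in Theorem \ref{thm:HtoU}. Since $\natural$ is an algebra homomorphism, it respects sums, scalar multiples, and products, so I may compute $\natural$ of each defining expression term by term. The images I will repeatedly invoke are $\natural(A)=\tfrac{H}{4}$, $\natural(B)=\tfrac{E^2+F^2+\Lambda-1}{4}-\tfrac{H^2}{8}$, $\natural(C)=\tfrac{E^2-F^2}{4}$, and $\natural(\alpha)=\tfrac{\Lambda-1}{4}$.

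The easiest checks are $\widehat{H}$ and $\widehat{\Lambda}$. For the former, $\natural(\widehat{H})=\natural(4A)=4\cdot\tfrac{H}{4}=H$. For the latter, $\natural(\widehat{\Lambda})=\natural(1+4\alpha)=1+4\cdot\tfrac{\Lambda-1}{4}=\Lambda$. For $\widehat{E^2}$ I would compute
$$
\natural(\widehat{E^2})=4\left(\frac{H}{4}\right)^2+2\left(\frac{E^2+F^2+\Lambda-1}{4}-\frac{H^2}{8}\right)+2\cdot\frac{E^2-F^2}{4}-2\cdot\frac{\Lambda-1}{4},
$$
and observe that the $\tfrac{H^2}{4}$ contribution from the first term cancels against $-2\cdot\tfrac{H^2}{8}=-\tfrac{H^2}{4}$, while the $F^2$, $\Lambda$, and $1$ contributions from the $B$-term cancel against the corresponding pieces coming from the $C$-term and the $\alpha$-term, leaving exactly $E^2$. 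The computation for $\widehat{F^2}$ is identical except that the sign of the $\widehat C$-contribution is reversed, so that the $E^2$ terms cancel and the $F^2$ terms survive, yielding $F^2$.

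The main point to be careful about—rather than a genuine obstacle—is bookkeeping: each identity is a linear combination in which several $H^2$, $\Lambda$, and scalar terms must cancel exactly, so I would organize the verification by grouping the coefficients of $E^2$, $F^2$, $H^2$, $\Lambda$, and $1$ separately in each expression and confirming that all unwanted coefficients vanish. Since the computations involve only the central element $\Lambda$ together with commuting scalars and the even powers $E^2,F^2,H^2$, no noncommutativity issues arise and no earlier lemmas beyond Theorem \ref{thm:HtoU} are needed; the four equalities then follow immediately from the linearity and multiplicativity of $\natural$.
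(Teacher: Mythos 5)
Your proposal is correct and is precisely the computation the paper has in mind: the paper's proof simply states that the lemma is routine to verify using Theorem \ref{thm:HtoU}, and your term-by-term evaluation of $\natural$ on (\ref{eq:natural_preimage_E^2})--(\ref{eq:natural_preimage_H}), with the cancellations of the $H^2$, $\Lambda$, and scalar contributions, is exactly that verification. The only computation beyond linearity is $\natural(A^2)=\natural(A)^2=\tfrac{H^2}{16}$, so your observation that no commutation relations are needed is accurate.
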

\begin{proof}
It is routine to verify the lemma by using Theorem \ref{thm:HtoU}.
\end{proof}

For convenience we use the notation $x^\natural$ to denote the image of $x$ under $\natural$ for all $x\in\H$.

\begin{lem}\label{lem:image_A,B,C,alpha,Omega}
\begin{enumerate}
  \item $A^\natural\in U_0$.
  \item $B^\natural\in U_2\oplus U_0\oplus U_{-2}$.
  \item $C^\natural\in U_2\oplus U_{-2}$.
  \item $\alpha^\natural\in U_0$.
\end{enumerate}
\end{lem}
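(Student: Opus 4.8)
The plan is to read off each membership directly from the explicit images (\ref{A})--(\ref{alpha}) of Theorem~\ref{thm:HtoU}, using the $\Z$-grading on $\U$ introduced before Definition~\ref{defn:Ue}. First I would recall that $E,F,H$ are homogeneous of degrees $1,-1,0$ respectively; hence by property \textbf{(G2)} we have $E^2\in U_2$, $F^2\in U_{-2}$, $H\in U_0$, and $H^2\in U_0$, while the unit $1\in U_0$.

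The only point requiring a genuine observation is the location of the Casimir element $\Lambda$. From its defining expression (\ref{U_casimir}), $\Lambda=EF+FE+\frac{H^2}{2}$ is a sum of terms of degree $0$: each of $EF$, $FE$, $H^2$ lies in $U_0$ by \textbf{(G2)}, so $\Lambda\in U_0$. This is the sole conceptual step, and it is immediate once one notes that the grading was precisely set up so that $EF$, $FE$, and $H^2$ are all homogeneous of degree~$0$.

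With these facts in hand, each claim follows by collecting homogeneous components. For (i), $A^\natural=\frac{H}{4}\in U_0$. For (iii), $C^\natural=\frac{E^2-F^2}{4}$ decomposes into its $U_2$ and $U_{-2}$ parts, so $C^\natural\in U_2\oplus U_{-2}$. For (iv), $\alpha^\natural=\frac{\Lambda-1}{4}\in U_0$ since both $\Lambda$ and $1$ lie in $U_0$. For (ii), I would write $B^\natural=\frac{E^2+F^2+\Lambda-1}{4}-\frac{H^2}{8}$, observe that $\frac{E^2}{4}\in U_2$ and $\frac{F^2}{4}\in U_{-2}$, and collect the remaining terms $\frac{\Lambda-1}{4}-\frac{H^2}{8}$ into $U_0$; hence $B^\natural\in U_2\oplus U_0\oplus U_{-2}$. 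Since the whole argument amounts to sorting the constituent generators by degree, there is no real obstacle here—the lemma is essentially bookkeeping, and its only purpose is to record the graded position of each image for use in the forthcoming analysis of $\mathrm{Im}\,\natural$ and $\mathrm{Ker}\,\natural$.
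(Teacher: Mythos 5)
Your proposal is correct and follows the paper's own argument exactly: read off parts (i) and (iii) directly from (\ref{A}) and (\ref{C}), and deduce parts (ii) and (iv) from (\ref{B}), (\ref{alpha}) together with the observation that $\Lambda\in U_0$ by its defining expression (\ref{U_casimir}). The only substantive point in either version is this last observation about $\Lambda$, which you identify and justify correctly.
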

\begin{proof}
(i), (iii): Immediate from (\ref{A}) and (\ref{C}).

(ii), (iv): Immediate from (\ref{B}), (\ref{alpha}) and the fact that $\Lambda\in U_0$ by (\ref{U_casimir}).
\end{proof}

The proof of Theorem \ref{thm:image of natural}(i) is as follows:

\medskip

\noindent {\it Proof of Theorem \ref{thm:image of natural}(i).}
By Definition \ref{defn:H} the algebra $\H$ is generated by $A,B,C$.
Combined with Lemma~\ref{lem:image_A,B,C,alpha,Omega}(i)--(iii) this implies that ${\rm Im}\,\natural \subseteq \U_{e}$.
By Theorem~\ref{thm:Ue_generators} the algebra $\U_{e}$ is generated by $E^2,F^2,\Lambda,H$. Combined with Lemma \ref{lem:bar(E2,F2,L,H)} this implies that $\U_{e}\subseteq {\rm Im}\,\natural $. Therefore Theorem \ref{thm:image of natural}(i) follows.
\hfill $\square$

\begin{lem}\label{lem:tilde_rho}
  There exists a unique algebra homomorphism $\widetilde{\rho}:\H\rightarrow\H$ that sends
  \begin{eqnarray*}
  (A,B,C,\alpha,\beta)&\mapsto& (-A,B,-C,\alpha,-\beta).
  \end{eqnarray*}
Moreover $\widetilde{\rho}$ satisfies the following properties:
\begin{enumerate}
    \item $\widetilde{\rho}^2=1$.
    \item $\widetilde{\rho}(\Omega)=\Omega$.
    \item $\natural\circ\widetilde{\rho}=\rho\circ\natural$.
\end{enumerate}
\end{lem}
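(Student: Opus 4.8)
The plan is to follow the template already used for Lemma~\ref{lem:homomorphism_rho}: first establish existence and uniqueness of $\widetilde{\rho}$ directly from the presentation of $\H$ in Definition~\ref{defn:H}, and then deduce properties (i)--(iii) formally from the prescribed action on the generators $A,B,C$. For existence I would verify that the assignment $A\mapsto -A$, $B\mapsto B$, $C\mapsto -C$ respects the defining relations. The relation $[A,B]=C$ is preserved because $[-A,B]=-[A,B]=-C$. For the centrality conditions I would substitute into the expressions (\ref{R1}) and (\ref{R2}): the image of $\alpha=[C,A]+2A^2+B$ is $[-C,-A]+2(-A)^2+B=[C,A]+2A^2+B=\alpha$, which is central, while the image of $\beta=[B,C]+4BA+2C$ is $[B,-C]+4B(-A)+2(-C)=-\beta$, also central. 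Thus the assignment is compatible with the relations, and since $\H$ is generated by $A,B,C$ it yields a unique algebra homomorphism $\widetilde{\rho}$; the recorded values $\widetilde{\rho}(\alpha)=\alpha$ and $\widetilde{\rho}(\beta)=-\beta$ are exactly these two computations.

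For the remaining three properties I would argue as follows. For (i), the composite $\widetilde{\rho}^2$ is an algebra homomorphism fixing each of $A,B,C$, and these generate $\H$, so $\widetilde{\rho}^2=1$. For (ii), I would apply $\widetilde{\rho}$ to the defining expression (\ref{eq:Casimir_H}) for $\Omega$ and simplify termwise, using $\widetilde{\rho}(\alpha)=\alpha$ and $\widetilde{\rho}(\beta)=-\beta$: the term $4ABA$ is unchanged since the two sign flips on $A$ cancel, $C^2$ is unchanged since $(-C)^2=C^2$, and the product $(-\beta)(-A)=\beta A$ leaves $-2\beta A$ unchanged, so $\widetilde{\rho}(\Omega)=\Omega$. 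For (iii), both $\natural\circ\widetilde{\rho}$ and $\rho\circ\natural$ are algebra homomorphisms $\H\to\U$, so it suffices to compare them on $A,B,C$. Here I would invoke $\rho(E^2)=F^2$, $\rho(F^2)=E^2$, $\rho(H)=-H$ and $\rho(\Lambda)=\Lambda$ from Lemma~\ref{lem:homomorphism_rho} together with the formulas (\ref{A})--(\ref{C}); the key observation is that $B^{\natural}$ is symmetric in $E^2,F^2$ and depends on $H$ only through $H^2$, so $\rho$ fixes $B^{\natural}$, matching $\natural(\widetilde{\rho}(B))=B^{\natural}$, whereas the sign changes produced by $\widetilde{\rho}$ on $A^{\natural}=\tfrac{H}{4}$ and $C^{\natural}=\tfrac{E^2-F^2}{4}$ coincide with the sign changes produced by $\rho$.

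Since every step reduces to a direct substitution, I do not expect a substantial obstacle; the genuine content lies in the existence check, i.e.\ confirming compatibility with the relations (\ref{R0})--(\ref{R2}). The only delicate point is sign bookkeeping—verifying that the $\beta$-expression maps to $-\beta$ rather than $\beta$, and that the five terms of $\Omega$ each return to themselves under $\widetilde{\rho}$—which is where an error would most easily arise and therefore where I would double-check each computation.
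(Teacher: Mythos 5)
Your proposal is correct and follows essentially the same route as the paper: existence and uniqueness by checking the defining relations (\ref{R0})--(\ref{R2}) on the generators $A,B,C$, property (i) by noting $\widetilde{\rho}^2$ fixes the generators, (ii) by applying $\widetilde{\rho}$ to (\ref{eq:Casimir_H}), and (iii) by comparing $\natural\circ\widetilde{\rho}$ and $\rho\circ\natural$ on $A,B,C$ via Theorem~\ref{thm:HtoU} and Lemma~\ref{lem:homomorphism_rho}. You have simply written out the sign-bookkeeping details that the paper dismisses as routine, and those details check out.
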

\begin{proof}
It is routine to verify the existence of $\widetilde{\rho}$ by using Definition~\ref{defn:H}. Since the algebra $\H$ is generated by $A,B,C$ the uniqueness of $\widetilde{\rho}$ follows.

(i): The homomorphism ${\widetilde{\rho}}^2$ fixes $A,B,C$.

 (ii): Apply $\widetilde{\rho}$ to either side of (\ref{eq:Casimir_H}).

 (iii): By Theorem~\ref{thm:HtoU} and Lemma~\ref{lem:homomorphism_rho} the algebra homomorphisms $\natural\circ\widetilde{\rho}$ and $\rho\circ\natural$ agree at $A,B,C$.
\end{proof}

\begin{lem}\label{lem:Omega}
The following equations hold in $\H$:
\begin{enumerate}
\item $\displaystyle \frac{\Omega-B^2+C^2}{2}+\beta A=2BA^2+2CA+(1-\alpha)B$.

\item $\displaystyle \frac{\Omega-B^2+C^2}{2}+\beta A=A^2B+BA^2-2A^2-\alpha B+\alpha$.
\end{enumerate}
\end{lem}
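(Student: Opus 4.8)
The plan is to treat this as a purely internal identity in $\H$: neither the homomorphism $\natural$ nor any module theory is needed, only the definition (\ref{eq:Casimir_H}) of $\Omega$ together with the defining relations (\ref{R0}) and (\ref{R1}). The common first move, which serves both parts, is to isolate the left-hand side shared by (i) and (ii). Starting from (\ref{eq:Casimir_H}) I would transpose $B^2-C^2$, divide by $2$, and add $\beta A$, obtaining
$$
\frac{\Omega-B^2+C^2}{2}+\beta A=2ABA+(1-\alpha)B.
$$
Thus both claims reduce to rewriting $2ABA+(1-\alpha)B$ in two different normal forms, and the work splits into two short commutator manipulations.

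For part (i), I would push the leftmost $A$ through $B$ using (\ref{R0}) in the form $AB=BA+C$, which gives $ABA=BA^2+CA$ and hence $2ABA=2BA^2+2CA$. Substituting this into the displayed expression produces exactly $2BA^2+2CA+(1-\alpha)B$, the right-hand side of (i).

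For part (ii), I would instead symmetrize $ABA$. Writing $C=AB-BA$ and expanding $[C,A]=CA-AC$ yields $[C,A]=2ABA-A^2B-BA^2$, equivalently $2ABA=A^2B+BA^2+[C,A]$. Relation (\ref{R1}) then supplies $[C,A]=\alpha-2A^2-B$; substituting this and combining the $B$-terms via $-B+(1-\alpha)B=-\alpha B$ collapses the expression to $A^2B+BA^2-2A^2-\alpha B+\alpha$, which is the right-hand side of (ii).

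The computations are elementary, so there is no genuine obstacle; everything reduces to distributivity together with (\ref{R0}) and (\ref{R1}). The only step demanding a little care is the term-tracking in part (ii): after inserting $[C,A]=\alpha-2A^2-B$ one must correctly carry out the cancellation $-B+(1-\alpha)B=-\alpha B$ to reach the stated form, and one should keep the signs in $[C,A]=CA-AC$ straight when symmetrizing $ABA$.
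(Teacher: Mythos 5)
Your proof is correct and matches the paper's argument in all essentials: part (i) is the identical substitution $ABA=BA^2+CA$ into the definition of $\Omega$, and part (ii) rests on the same two ingredients, namely the commutator identity relating $2ABA$ to $A^2B+BA^2$ and the relation $[C,A]=\alpha-2A^2-B$. The only cosmetic difference is organizational: the paper proves (ii) by showing the right-hand sides of (i) and (ii) differ by $[C,A]+2A^2+B-\alpha=0$, whereas you substitute directly into the common expression $2ABA+(1-\alpha)B$; the underlying computation is the same.
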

\begin{proof}
(i): By (\ref{R0}) we have $ABA=(C+BA)A$. Lemma \ref{lem:Omega}(i) follows by substituting this into (\ref{eq:Casimir_H}).

(ii): The subtraction of the right-hand side of Lemma \ref{lem:Omega}(ii) from the right-hand side of Lemma \ref{lem:Omega}(i) is equal to
\begin{gather}\label{Omega1-Omega2}
[B,A^2]+2A^2+2CA+B-\alpha.
\end{gather}
Applying (\ref{eq:U_identity}) with $(x,y,z)=(A,A,B)$ yields that
$[A^2,B]=AC+CA$. Hence (\ref{Omega1-Omega2}) is equal to
$[C,A]+2A^2+B-\alpha=0$ by the setting (\ref{R1}) of $\alpha$. Therefore Lemma \ref{lem:Omega}(ii) follows.
\end{proof}

\begin{lem}\label{lem:image_omega}
\begin{enumerate}
 \item $\Omega^\natural\in U_4\oplus U_2\oplus U_0\oplus U_{-2}\oplus U_{-4}$.

\item $ \Omega^\natural= \frac{3}{16}(2\Lambda-3)$.
\end{enumerate}
\end{lem}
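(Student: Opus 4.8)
The plan is to establish (i) by a homogeneity count, then to combine (i) with the centrality of $\Omega$ to collapse $\Omega^\natural$ onto a single graded piece, and finally to read off (ii) by a short computation inside the commutative subspace $U_0$.

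For (i), I would apply $\natural$ to the defining expression (\ref{eq:Casimir_H}) for $\Omega$. The term $-2\beta^\natural A^\natural$ drops out because $\beta^\natural=0$ by (\ref{beta}). Using Lemma~\ref{lem:image_A,B,C,alpha,Omega} we have $A^\natural,\alpha^\natural\in U_0$, $B^\natural\in U_2\oplus U_0\oplus U_{-2}$ and $C^\natural\in U_2\oplus U_{-2}$. By the grading property \textbf{(G2)}, the terms $4A^\natural B^\natural A^\natural$ and $2(1-\alpha^\natural)B^\natural$ lie in $U_2\oplus U_0\oplus U_{-2}$, while $(B^\natural)^2\in U_4\oplus U_2\oplus U_0\oplus U_{-2}\oplus U_{-4}$ and $(C^\natural)^2\in U_4\oplus U_0\oplus U_{-4}$. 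Collecting these pieces gives $\Omega^\natural\in U_4\oplus U_2\oplus U_0\oplus U_{-2}\oplus U_{-4}$, which is (i).

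For (ii), the key point is that $\Omega$ is central in $\H$, so $\Omega^\natural$ commutes with every element of ${\rm Im}\,\natural=\U_e$ by Theorem~\ref{thm:image of natural}(i); in particular $[H,\Omega^\natural]=0$ since $H\in U_0\subseteq \U_e$. From the Poincar\'e--Birkhoff--Witt basis of $U_n$ in Lemma~\ref{lem:Un_basis_Poincare} together with Lemma~\ref{lem:HE^n,HF^n}, one checks that $[H,x]=2nx$ for every homogeneous $x\in U_n$. Writing $\Omega^\natural=\sum_j\omega_{2j}$ with $\omega_{2j}\in U_{2j}$ as supplied by (i), the relation $0=[H,\Omega^\natural]=\sum_j 4j\,\omega_{2j}$ and the directness of the sum force $\omega_{2j}=0$ for $j\neq 0$. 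Hence $\Omega^\natural$ equals its degree-$0$ component and lies in the commutative space $U_0$.

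It then remains to compute that component. I would write $B^\natural=\tfrac{E^2}{4}+b_0+\tfrac{F^2}{4}$ with $b_0=\tfrac{\Lambda-1}{4}-\tfrac{H^2}{8}\in U_0$, and $C^\natural=\tfrac{E^2}{4}-\tfrac{F^2}{4}$, and extract the degree-$0$ part of each summand of $\Omega^\natural$: the contribution $4A^\natural b_0 A^\natural=\tfrac{H^2}{4}b_0$ from $4A^\natural B^\natural A^\natural$, the parts $\tfrac{E^2F^2+F^2E^2}{16}+b_0^2$ of $(B^\natural)^2$, the part $\tfrac{E^2F^2+F^2E^2}{16}$ of $-(C^\natural)^2$ (the sign of $C^\natural$ preserves the cross terms), and $2(1-\alpha^\natural)b_0$. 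Summing yields $\tfrac{H^2}{4}b_0+\tfrac{E^2F^2+F^2E^2}{8}+b_0^2+2(1-\alpha^\natural)b_0$. Replacing $E^2F^2+F^2E^2$ by the right-hand sides of (\ref{Ue3}) and (\ref{Ue4}) makes this a polynomial in the commuting elements $H,\Lambda$; after collecting terms the $H^4$, $\Lambda H^2$, $\Lambda^2$ and $H^2$ contributions all cancel, leaving $\tfrac{24\Lambda-36}{64}=\tfrac{3}{16}(2\Lambda-3)$, which is (ii). The main obstacle is purely this final bookkeeping; the payoff of the centrality argument is that we never have to evaluate or verify the vanishing of the $U_{\pm2}$ and $U_{\pm4}$ components, only the $U_0$ part.
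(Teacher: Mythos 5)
Your proof is correct, and part (ii) takes a genuinely different route from the paper. Part (i) is the same homogeneity count the paper uses. For part (ii), the paper never invokes the centrality of $\Omega$: it writes out the components $\Omega_4^\natural$ and $\Omega_2^\natural$ explicitly (via Lemma \ref{lem:Omega}(i) and a table of the homogeneous components of $A^\natural,B^\natural,C^\natural,\alpha^\natural$), kills them by direct calculation with Lemmas \ref{lem:HE^n,HF^n} and \ref{lem:H^2E^n,H^2F^n}, and then disposes of $\Omega_{-2}^\natural$ and $\Omega_{-4}^\natural$ by a symmetry argument: applying $\natural$ to $\widetilde{\rho}(\Omega)=\Omega$ and using $\natural\circ\widetilde{\rho}=\rho\circ\natural$ together with $\rho(U_n)=U_{-n}$ gives $\rho(\Omega_n^\natural)=\Omega_{-n}^\natural$. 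You instead observe that $\Omega$ central forces $[H,\Omega^\natural]=0$ (indeed you only need $H=4A^\natural\in{\rm Im}\,\natural$, so citing Theorem \ref{thm:image of natural}(i) is overkill, though legitimate since it is proved before this lemma), and that ${\rm ad}\,H$ acts on $U_{2j}$ as multiplication by $4j$ --- a one-line consequence of Lemma \ref{lem:Un_basis_Poincare} and Lemma \ref{lem:HE^n,HF^n} --- so every component of nonzero degree vanishes at once. This is cleaner: it eliminates both the explicit vanishing computations for $\Omega_2^\natural,\Omega_4^\natural$ and the entire $\widetilde{\rho}/\rho$ bookkeeping, at the cost of one easily verified grading fact the paper does not state. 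What remains is the same degree-zero computation in both proofs (your extraction of the $U_0$ part agrees with the paper's (\ref{eq:omega_0'}), since $\tfrac{H^2}{4}b_0+b_0^2+2(1-\alpha^\natural)b_0=-\tfrac{(H^2-2\Lambda+2)(H^2-2\Lambda+18)}{64}$), and your final simplification to $\tfrac{3}{16}(2\Lambda-3)$ checks out.
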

\begin{proof}
(i): Recall from (\ref{beta}) that $\beta^\natural=0$. Combined with Lemma \ref{lem:image_A,B,C,alpha,Omega} this yields Lemma \ref{lem:image_omega}(i).

(ii): Let $x\in\U$ be given. By {\bf (G1)} there are unique $x_n\in U_n$ for all $n\in\Z$ such that
$$
x=\sum\limits_{n\in\Z}x_n.
$$
The element $x_n$ is called the $n^{\rm th}$ homogeneous component of $x$ for each $n\in\Z$.
Using Lemma \ref{lem:Omega}(i) yields that
\begin{align*}
  & \Omega_4^\natural = {(B_2^{\natural})}^2-{(C_2^{\natural})}^2,
  \\
  & \Omega_2^\natural =
  4(C_2^{\natural}+B_2^{\natural}A_0^{\natural}) A_0^{\natural}
  +
  B_0^{\natural}B_2^{\natural}+B_2^{\natural}B_0^{\natural}
  +2(1-\alpha_0^{\natural})B_2^{\natural},
  \\
  & \Omega_0^\natural =
  4B_0^{\natural}{(A_0^{\natural})}^2
  + {(B_0^{\natural})}^2+B_2^{\natural}B_{-2}^{\natural}
  +B_{-2}^{\natural}B_2^{\natural}
  -C_2^{\natural}C_{-2}^{\natural}
  -C_{-2}^{\natural}C_2^{\natural}
  +2(1-\alpha_0^{\natural})B_0^{\natural}.
\end{align*}

In the table below we list the nonzero homogeneous components of $A^{\natural},B^{\natural},C^{\natural},\alpha^{\natural}$:
\begin{table}[H]
\renewcommand\arraystretch{1.5}
\centering
\begin{tabular}{c|cccc}

  $x$ & $A$ & $B$ & $C$ & $\alpha$ \\
  \hline
  $x_2^{\natural}$ & 0 & $\frac{E^2}{4}$ & $\frac{E^2}{4}$ & 0 \\
  $x_0^{\natural}$ & $\frac{H}{4}$ & $\frac{\Lambda-1}{4}-\frac{H^2}{8}$ & 0 & $\frac{\Lambda-1}{4}$ \\
  $x_{-2}^{\natural}$ & 0 & $\frac{F^2}{4}$ &  $-\frac{F^2}{4}$ & 0 \\
\end{tabular}
\end{table}
\noindent A direct calculation shows that $\Omega_4^\natural=0$ and
\begin{align}
  \Omega_2^\natural &= \frac{E^2}{2}+\frac{E^2H}{4}-\frac{[H^2,E^2]}{32}, \label{eq:omega_2'}\\
  \Omega_0^\natural &= \frac{E^2F^2+F^2E^2}{8}-\frac{(H^2-2\Lambda+2)(H^2-2\Lambda+18)}{64}. \label{eq:omega_0'}
\end{align}
Applying Lemmas~\ref{lem:HE^n,HF^n}(i) and \ref{lem:H^2E^n,H^2F^n}(i) yields that (\ref{eq:omega_2'}) is equal to $0$. Applying Lemma~\ref{lem:E^nF^n,F^nE^n} yields that (\ref{eq:omega_0'}) is equal to $\frac{3}{16}(2\Lambda-3)$. Applying $\natural$ to either side of Lemma~\ref{lem:tilde_rho}(ii), it follows from Lemma~\ref{lem:tilde_rho}(iii) that
$
\rho(\Omega^{\natural})=\Omega^{\natural}$.
Combined with Lemma~\ref{lem:homomorphism_rho}(iii) this yields that
\begin{gather*}
\rho(\Omega_n^{\natural})=\Omega_{-n}^{\natural}
\qquad
\hbox{for all $n\in\Z$}.
\end{gather*}
Hence $\Omega_{-4}^{\natural}=0$ and $\Omega_{-2}^{\natural}=0$. Lemma \ref{lem:image_omega}(ii) follows from the above comments and Lemma~\ref{lem:image_omega}(i).
\end{proof}

Recall the elements
$
\widehat{E^2},
\widehat{F^2},
\widehat{\Lambda},
\widehat{H}
$
of $\H$ from (\ref{eq:natural_preimage_E^2})--(\ref{eq:natural_preimage_H}).

\begin{lem}\label{lem:E2F2 tilde_rho}
The algebra homomorphism $\widetilde{\rho}:\H\to \H$ sends
\begin{eqnarray*}
(\widehat{E^2},\widehat{F^2},\widehat{\Lambda},\widehat{H})
&\mapsto &
(\widehat{F^2},\widehat{E^2},\widehat{\Lambda},-\widehat{H}).
\end{eqnarray*}
\end{lem}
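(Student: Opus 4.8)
The plan is to verify the four assignments one at a time by applying $\widetilde{\rho}$ directly to the defining formulas (\ref{eq:natural_preimage_E^2})--(\ref{eq:natural_preimage_H}), using only that $\widetilde{\rho}$ is an algebra homomorphism together with its action $(A,B,C,\alpha,\beta)\mapsto(-A,B,-C,\alpha,-\beta)$ recorded in Lemma \ref{lem:tilde_rho}. Since $\widetilde{\rho}$ already exists, there is no existence or uniqueness question to address; the content is purely the evaluation of $\widetilde{\rho}$ on four prescribed elements of $\H$.

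The single observation that makes everything transparent is that $\widetilde{\rho}(A^2)=(-A)^2=A^2$. Consequently every term built from $A^2$ is $\widetilde{\rho}$-invariant, as are the summand $B$ and the central element $\alpha$; only the terms linear in $C$, and the lone generator $A$ appearing in $\widehat{H}$, change sign. First I would note that $\widehat{E^2}$ and $\widehat{F^2}$ agree except for the sign of their $2C$ summand, so sending $C\mapsto -C$ while fixing the common part $4A^2+2B-2\alpha$ interchanges them, giving $\widetilde{\rho}(\widehat{E^2})=\widehat{F^2}$ and, symmetrically, $\widetilde{\rho}(\widehat{F^2})=\widehat{E^2}$. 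Next, since $\widehat{\Lambda}=1+4\alpha$ involves only the $\widetilde{\rho}$-fixed element $\alpha$, it is fixed. Finally $\widehat{H}=4A\mapsto -4A=-\widehat{H}$.

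I do not expect any genuine obstacle: the statement reduces to a mechanical bookkeeping of signs. The only point requiring care is identifying which summands are $\widetilde{\rho}$-invariant---precisely the $A^2$-, $B$-, and $\alpha$-terms---so that the $C$-dependent pieces are correctly seen to swap $\widehat{E^2}$ and $\widehat{F^2}$ rather than to produce any new element of $\H$.
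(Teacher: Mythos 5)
Your verification is correct and is exactly what the paper means by its one-line proof (``It is routine to verify the lemma by using Lemma \ref{lem:tilde_rho}''): apply $\widetilde{\rho}$ termwise to (\ref{eq:natural_preimage_E^2})--(\ref{eq:natural_preimage_H}), noting $\widetilde{\rho}(A^2)=A^2$ so only the $C$- and $A$-linear terms change sign. No gap; you have simply written out the routine computation the paper leaves implicit.
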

\begin{proof}
It is routine to verify the lemma by using Lemma \ref{lem:tilde_rho}.
\end{proof}

\begin{lem}\label{lem:bar(HE^2,HF^2)}
The following equations hold in $\H$:
\begin{enumerate}
  \item $[\widehat{H},\widehat{E^2}]=4\widehat{E^2}$.
  \item $[\widehat{H},\widehat{F^2}]=-4\widehat{F^2}$.
\end{enumerate}
\end{lem}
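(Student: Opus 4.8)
The plan is to prove (i) by a direct commutator computation from the defining relations of $\H$, and then to obtain (ii) essentially for free by applying the involution $\widetilde{\rho}$, exactly in the same spirit in which Lemma~\ref{lem:HE^n,HF^n}(ii) was deduced from Lemma~\ref{lem:HE^n,HF^n}(i) via $\rho$.

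For (i), I would write $[\widehat{H},\widehat{E^2}]=[4A,\,4A^2+2B+2C-2\alpha]$ and split it into pieces. The term $[4A,4A^2]$ vanishes because $A$ commutes with $A^2$, and $[4A,-2\alpha]$ vanishes because $\alpha$ is central in $\H$. What remains is $8[A,B]+8[A,C]$. The first bracket equals $C$ by the relation~(\ref{R0}). For the second, I would use the setting~(\ref{R1}) of $\alpha$, namely $[C,A]+2A^2+B=\alpha$, to obtain $[A,C]=-[C,A]=2A^2+B-\alpha$. Substituting yields $[\widehat{H},\widehat{E^2}]=8C+8(2A^2+B-\alpha)=16A^2+8B+8C-8\alpha$, which is precisely $4\widehat{E^2}$ by~(\ref{eq:natural_preimage_E^2}).

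For (ii), I would apply $\widetilde{\rho}$ to the identity proved in part (i). By Lemma~\ref{lem:E2F2 tilde_rho} the map $\widetilde{\rho}$ sends $\widehat{H}\mapsto-\widehat{H}$ and $\widehat{E^2}\mapsto\widehat{F^2}$, and being an algebra homomorphism it respects commutators. Hence $\widetilde{\rho}([\widehat{H},\widehat{E^2}])=[-\widehat{H},\widehat{F^2}]=-[\widehat{H},\widehat{F^2}]$, while $\widetilde{\rho}(4\widehat{E^2})=4\widehat{F^2}$, so the identity of part (i) transforms into $-[\widehat{H},\widehat{F^2}]=4\widehat{F^2}$, which is exactly (ii).

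The computation carries no genuine obstacle; the only point demanding care is the correct bookkeeping of centrality. One must remember that $\alpha$ commutes with $A$ (so it drops out of the bracket entirely) and that the seemingly stray $2A^2+B$ summands produced by $[A,C]$ are precisely what is needed to reassemble $\widehat{E^2}$ rather than extra debris. Keeping the sign straight in $[A,C]=-[C,A]$ is the single place where a slip would propagate through the whole identity.
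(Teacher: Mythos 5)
Your proof is correct and follows essentially the same route as the paper: part (i) is the paper's computation $[\widehat{H},\widehat{E^2}]=8[A,B+C]=4\widehat{E^2}$ via relations (\ref{R0}) and (\ref{R1}), spelled out in full detail, and part (ii) is obtained, exactly as in the paper, by applying $\widetilde{\rho}$ and invoking Lemma~\ref{lem:E2F2 tilde_rho}.
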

\begin{proof}
 (i): Observe that
$[\widehat{H},\widehat{E^2}]=8[A,B+C]$.
By (\ref{R0}) and (\ref{R1}) it is equal to $4\widehat{E^2}$.

 (ii): Lemma \ref{lem:bar(HE^2,HF^2)}(ii) follows by applying $\widetilde{\rho}$ to Lemma \ref{lem:bar(HE^2,HF^2)}(i) and evaluating the resulting equation by Lemma \ref{lem:E2F2 tilde_rho}.
\end{proof}

\begin{lem}\label{lem:pre_bar(E^2F^2,F^2E^2)}
The following equations hold in $\H$:
\begin{enumerate}
\item $[A,C]=2A^2+B-\alpha$.

\item $[A^2,C]=4 A^3+2AB-2\alpha A-C$.

\item $[[A,C],C]= 8 A^3-4\alpha A+\beta$.
\end{enumerate}
\end{lem}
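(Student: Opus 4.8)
The plan is to obtain all three identities as direct consequences of the defining relations (\ref{R0})--(\ref{R2}), the commutator identity (\ref{eq:U_identity}), and the centrality of $\alpha$ and $\beta$. Part (i) requires essentially no work: since $\alpha$ is by definition the element $[C,A]+2A^2+B$ of (\ref{R1}), rearranging and using $[C,A]=-[A,C]$ yields $[A,C]=2A^2+B-\alpha$ at once.

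For part (ii) I would expand $[A^2,C]$ by applying (\ref{eq:U_identity}) with $(x,y,z)=(A,A,C)$, giving $[A^2,C]=A[A,C]+[A,C]A$. Substituting the formula for $[A,C]$ from part (i) and invoking the centrality of $\alpha$ to slide it past $A$ produces $4A^3+AB+BA-2\alpha A$. The one genuinely substantive step is converting the symmetric combination $AB+BA$ into the asymmetric form $2AB-C$ appearing on the right-hand side; this is accomplished by the relation (\ref{R0}), which gives $BA=AB-C$. Collecting terms then yields $[A^2,C]=4A^3+2AB-2\alpha A-C$.

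For part (iii) I would again start from part (i), writing $[[A,C],C]=[2A^2+B-\alpha,C]=2[A^2,C]+[B,C]-[\alpha,C]$. The term $[\alpha,C]$ vanishes by centrality of $\alpha$, the term $2[A^2,C]$ is supplied by part (ii), and $[B,C]$ is replaced using the definition (\ref{R2}) of $\beta$ in the form $[B,C]=\beta-4BA-2C$. After substitution the resulting expression contains the combination $4AB-4BA$, which collapses to $4C$ by (\ref{R0}), and this $4C$ cancels against the $-4C$ coming from the $C$-terms, leaving precisely $8A^3-4\alpha A+\beta$.

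Strictly speaking there is no real obstacle here, as each identity reduces to a short manipulation within the defining relations. The only care required is accurate bookkeeping of the noncommutative products: one must consistently use (\ref{R0}) to eliminate $BA$ in favour of $AB$ (thereby producing the $C$-terms that ultimately cancel or combine) and must keep track of where the centrality of $\alpha$ is used to reorder factors. The three identities are arranged so that each feeds into the next, so I would present them in the stated order and simply quote the earlier parts as they are needed.
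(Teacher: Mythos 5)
Your proposal is correct and follows essentially the same route as the paper's own proof: part (i) read off from the definition (\ref{R1}) of $\alpha$, part (ii) via the identity (\ref{eq:U_identity}) with $(x,y,z)=(A,A,C)$ followed by the substitution $BA=AB-C$ from (\ref{R0}), and part (iii) by expanding $[[A,C],C]$ through part (i) and evaluating with part (ii), the definition (\ref{R2}) of $\beta$, and (\ref{R0}). The bookkeeping you describe (the $4[A,B]=4C$ collapse cancelling the $-4C$ terms) is exactly the computation the paper leaves implicit.
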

\begin{proof}
(i): Immediate from the setting (\ref{R1}) of $\alpha$.

(ii): Applying (\ref{eq:U_identity}) with $(x,y,z)=(A,A,C)$ yields that
$$
[A^2,C]=A[A,C]+[A,C]A.
$$
Substituting Lemma \ref{lem:pre_bar(E^2F^2,F^2E^2)}(i) into the right-hand side of the above equation yields that
$$
[A^2,C]=4A^3+AB+BA-2\alpha A.
$$
By (\ref{R0}) we have $BA=AB-C$. Substituting this into the above equation yields Lemma \ref{lem:pre_bar(E^2F^2,F^2E^2)}(ii).

(iii): By Lemma \ref{lem:pre_bar(E^2F^2,F^2E^2)}(i) the commutator
$$
[[A,C],C]=2[A^2,C]+[B,C].
$$
Lemma \ref{lem:pre_bar(E^2F^2,F^2E^2)}(iii) follows by evaluating the right-hand side of the above equation by using (\ref{R0}), the setting (\ref{R2}) of $\beta$ and Lemma \ref{lem:pre_bar(E^2F^2,F^2E^2)}(ii).
\end{proof}

\begin{lem}\label{lem:bar(E^2F^2,F^2E^2)}
The following equations hold in $\H$:
\begin{enumerate}
\item $16 \widehat{E^2}\widehat{F^2}
  -
(\widehat{H}^2-2\widehat{H}-2\widehat{\Lambda})(\widehat{H}^2-6\widehat{H}-2\widehat{\Lambda}+8)
=4(16 \Omega-24\alpha+3)+64 \beta (2A-1)$.

\item $16 \widehat{F^2}\widehat{E^2}-
(\widehat{H}^2+2\widehat{H}-2\widehat{\Lambda})(\widehat{H}^2+6\widehat{H}-2\widehat{\Lambda}+8)
=4(16 \Omega-24\alpha+3)+64 \beta (2A+1)$.
\end{enumerate}
\end{lem}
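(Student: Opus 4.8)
The plan is to prove part (i) by a direct expansion and then obtain part (ii) for free by the symmetry $\widetilde{\rho}$. The key observation for (i) is that $\widehat{E^2}$ and $\widehat{F^2}$ differ only in the sign of their $C$-term: writing $P=4A^2+2B-2\alpha$ we have $\widehat{E^2}=P+2C$ and $\widehat{F^2}=P-2C$, whence $\widehat{E^2}\widehat{F^2}=P^2+2[C,P]-4C^2$. Since $\alpha$ is central, $[C,P]=4[C,A^2]+2[C,B]$; I would evaluate $[C,A^2]$ from Lemma~\ref{lem:pre_bar(E^2F^2,F^2E^2)}(ii) and $[C,B]$ from the defining relation (\ref{R2}) of $\beta$ together with (\ref{R0}), after which the term $-8AB+8BA=-8[A,B]=-8C$ collapses and the commutator simplifies to $[C,P]=-16A^3+8\alpha A-2\beta$.

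Next I would expand the subtracted product. Because $\widehat{H}=4A$ and $\widehat{\Lambda}=1+4\alpha$ is central, the two factors $\widehat{H}^2-2\widehat{H}-2\widehat{\Lambda}$ and $\widehat{H}^2-6\widehat{H}-2\widehat{\Lambda}+8$ are ordinary commuting polynomials in $A$ and $\alpha$, so their product is computed by commutative polynomial multiplication. Subtracting this from $16\widehat{E^2}\widehat{F^2}=16P^2+32[C,P]-64C^2$, the degree-four and degree-three terms in $A$, as well as the $\alpha A^2$, $\alpha A$, and $\alpha^2$ terms, cancel exactly, leaving an expression involving only $A^2B+BA^2$, $B^2$, $C^2$, $\alpha B$, $A^2$, $\beta$, $\alpha$, and constants.

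The decisive step is to recognize the surviving $A^2B+BA^2$ contribution through the Casimir element. Lemma~\ref{lem:Omega}(ii) yields $A^2B+BA^2=\tfrac{\Omega-B^2+C^2}{2}+\beta A+2A^2+\alpha B-\alpha$; substituting this (scaled appropriately) into the leftover expression makes the $B^2$, $C^2$, $\alpha B$, and $A^2$ terms cancel in pairs and produces exactly the right-hand side $4(16\Omega-24\alpha+3)+64\beta(2A-1)$, with the constant $+12$ matching the $4\cdot 3$ on the target. This establishes (i). Part (ii) then follows by applying $\widetilde{\rho}$ to (i): by Lemma~\ref{lem:E2F2 tilde_rho} this interchanges $\widehat{E^2}\leftrightarrow\widehat{F^2}$ and replaces $\widehat{H}$ by $-\widehat{H}$ while fixing $\widehat{\Lambda}$, turning the left-hand side into that of (ii), while on the right Lemma~\ref{lem:tilde_rho}(ii) fixes $\Omega$ and $\alpha$ and the substitutions $\beta\mapsto-\beta$, $A\mapsto-A$ send $64\beta(2A-1)$ to $64\beta(2A+1)$.

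The only genuine labor is the bookkeeping in the expansion of part (i); I expect the clean cancellation of the high-degree $A$-terms together with the precise appearance of $\Omega$ through Lemma~\ref{lem:Omega}(ii) to be the step most prone to arithmetic slips, but no conceptual obstacle arises, and the $\widetilde{\rho}$-symmetry removes any need to repeat the calculation for (ii).
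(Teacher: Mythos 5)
Your proposal is correct and follows essentially the same route as the paper: your $P=4A^2+2B-2\alpha$ equals $2[A,C]$ by Lemma~\ref{lem:pre_bar(E^2F^2,F^2E^2)}(i), so your difference-of-squares expansion $P^2+2[C,P]-4C^2$ and evaluation of $[C,P]$ via Lemma~\ref{lem:pre_bar(E^2F^2,F^2E^2)}(ii) with (\ref{R0}), (\ref{R2}) is exactly the paper's computation of $4\bigl([A,C]^2-[[A,C],C]-C^2\bigr)$ using parts (i) and (iii) of that lemma. The remaining steps — the commutative expansion of the $\widehat{H},\widehat{\Lambda}$ product, the substitution of Lemma~\ref{lem:Omega}(ii) to produce $\Omega$, and the use of $\widetilde{\rho}$ for part (ii) — coincide with the paper's proof, and your stated intermediate identities all check out.
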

\begin{proof}
(i): Using the setting (\ref{R1}) of $\alpha$ yields that $\widehat{E^2}$ and $\widehat{F^2}$ are equal to
$2([A,C]+C)$
and
$2([A,C]-C)$ respectively. Hence
$\widehat{E^2}\widehat{F^2}$ is equal to $4$ times
\begin{align}\label{bar(EF)}
[A,C]^2-[[A,C],C]-C^2.
\end{align}
By Lemma \ref{lem:pre_bar(E^2F^2,F^2E^2)}(i), (iii) the element (\ref{bar(EF)}) is equal to
\begin{align*}
4A^4-8A^3+B^2-C^2+2 A^2B+2BA^2-4\alpha A^2+4\alpha A-2 \alpha B+\alpha^2-\beta.
\end{align*}
By (\ref{eq:natural_preimage_Lambda}) and (\ref{eq:natural_preimage_H}) the product $(\widehat{H}^2-2\widehat{H}-2\widehat{\Lambda})(\widehat{H}^2-6\widehat{H}-2\widehat{\Lambda}+8)$ is equal to $4$ times
\begin{align*}
64 A^4-128 A^3+64 A^2
-64 \alpha A^2+64 \alpha A+16 \alpha^2-8 \alpha-3.
\end{align*}
Using the above results yields that the left-hand side of Lemma \ref{lem:bar(E^2F^2,F^2E^2)}(i) is equal to $4$ times
\begin{align}\label{relation:bar(E^2F^2)}
16B^2-16 C^2+ 32 A^2B+ 32 BA^2-64 A^2-32 \alpha B+ 8\alpha-16 \beta + 3.
\end{align}
By Lemma \ref{lem:Omega}(ii) the element (\ref{relation:bar(E^2F^2)}) is equal to
$16 \Omega-24 \alpha+3+16\beta(2A-1)$. Therefore Lemma \ref{lem:bar(E^2F^2,F^2E^2)}(i) follows.

 (ii):
Lemma \ref{lem:bar(E^2F^2,F^2E^2)}(ii) follows by applying $\widetilde{\rho}$ to  Lemma \ref{lem:bar(E^2F^2,F^2E^2)}(i) and evaluating the resulting equation by Lemmas \ref{lem:tilde_rho} and \ref{lem:E2F2 tilde_rho}.
\end{proof}

\noindent {\it Proof of Theorem \ref{thm:image of natural}(ii).}
Let $\mathcal K$ denote the two-sided ideal of $\H$ generated by $\beta$ and $16 \Omega-24 \alpha+3$.
By (\ref{beta}) the element $\beta\in {\rm Ker}\,\natural$. Using  (\ref{alpha}) and Lemma~\ref{lem:image_omega}(ii) yields that $16 \Omega-24 \alpha+3\in {\rm Ker}\,\natural$. Hence $\mathcal K\subseteq {\rm Ker}\, \natural$. There exists a unique algebra homomorphism $\overline{\natural}:\H/\mathcal K \to \U_{e}$ given by
$$
\overline{\natural}(x+\mathcal K )=x^\natural\qquad\hbox{for all $x\in\H$}.
$$
To see that ${\rm Ker}\, \natural\subseteq \mathcal K$ it suffices to show that $\overline{\natural}$ is injective.
Since $\alpha$ is central in $\H$ the element $\widehat{\Lambda}$ commutes with $\widehat{E^2},\widehat{F^2},\widehat{H}$.
Combined with Lemmas~\ref{lem:bar(HE^2,HF^2)}, \ref{lem:bar(E^2F^2,F^2E^2)} the cosets $\widehat{E^2}+\mathcal K,\widehat{F^2}+\mathcal K,\widehat{\Lambda}+\mathcal K,\widehat{H}+\mathcal K$ satisfy the relations (\ref{Ue1})--(\ref{Ue5}). By Theorem~\ref{thm:Ue_generators} there exists a unique algebra homomorphism $\sharp: \U_{e}\to \H/\mathcal K$ that sends
      \begin{eqnarray*}
      (E^2,F^2,\Lambda,H) &\mapsto &
      (\widehat{E^2}+\mathcal K,\widehat{F^2}+\mathcal K,\widehat{\Lambda}+\mathcal K,\widehat{H}+\mathcal K).
      \end{eqnarray*}

Using (\ref{eq:natural_preimage_E^2})--(\ref{eq:natural_preimage_H}) yields that
\begin{align*}
A&=\frac{\widehat{H}}{4},
\\
B&=\frac{\widehat{E^2}+\widehat{F^2}}{4}-\frac{\widehat{H}^2}{8}+
\frac{\widehat{\Lambda}-1}{4},
\\
C&=\frac{\widehat{E^2}-\widehat{F^2}}{4}.
\end{align*}
Since the algebra $\H$ is generated by $A,B,C$ it follows that the algebra $\H/\mathcal K$ is generated by $\widehat{E^2}+\mathcal K,\widehat{F^2}+\mathcal K,\widehat{\Lambda}+\mathcal K,\widehat{H}+\mathcal K$. By Lemma \ref{lem:bar(E2,F2,L,H)} the algebra homomorphism $\sharp\circ\overline{\natural}$ fixes each of $\widehat{E^2}+\mathcal K,\widehat{F^2}+\mathcal K,\widehat{\Lambda}+\mathcal K,\widehat{H}+\mathcal K$. It follows that $\sharp\circ\overline{\natural}=1$ and thus $\overline{\natural}$ is injective. Theorem \ref{thm:image of natural}(ii) follows.
      \hfill $\square$



\section{Proof for Theorem \ref{thm:decomposition_Ln} and finite-dimensional irreducible $\U_{e}$-modules}\label{sec:Hmodule}

For any $\U$-module $V$ and any $\theta\in\C$ let
$
V(\theta)=\{v\in V\mid Hv=\theta v\}$.

\begin{prop}\label{prop:V}
  Let $V$ denote a $\U$-module. Then
  $$\bigoplus\limits_{n\in\Z}V(\theta+4n)$$
  is a $\U_{e}$-submodule of $V$ for any $\theta\in\C$.
\end{prop}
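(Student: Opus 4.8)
The plan is to exploit the presentation of $\U_e$ from Theorem~\ref{thm:Ue_generators}: since $\U_e$ is generated by $E^2, F^2, \Lambda, H$, it suffices to verify that the subspace $W=\bigoplus_{n\in\Z}V(\theta+4n)$ is invariant under the action of each of these four generators. Before doing so I would note that the sum is genuinely direct, because the $H$-eigenspaces attached to the pairwise distinct eigenvalues $\theta+4n$ $(n\in\Z)$ are linearly independent; thus $W$ is a well-defined subspace of $V$.

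The heart of the argument is to track how each generator moves the $H$-eigenvalues. The element $H$ acts on $V(\theta+4n)$ as the scalar $\theta+4n$, hence preserves each summand and therefore $W$. The Casimir element $\Lambda$ is central in $\U$, so it commutes with $H$; consequently if $Hv=\mu v$ then $H(\Lambda v)=\Lambda Hv=\mu(\Lambda v)$, so $\Lambda$ stabilizes every eigenspace $V(\mu)$ and hence $W$. For $E^2$ I would use Lemma~\ref{lem:HE^n,HF^n}(i) (equivalently the relation (\ref{Ue1})) in the form $HE^2=E^2(H+4)$: for $v\in V(\mu)$ this gives $H(E^2v)=E^2(H+4)v=(\mu+4)E^2v$, so $E^2$ maps $V(\mu)$ into $V(\mu+4)$. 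In particular $E^2$ carries $V(\theta+4n)$ into $V(\theta+4(n+1))\subseteq W$. Symmetrically, Lemma~\ref{lem:HE^n,HF^n}(ii) (or (\ref{Ue2})) gives $HF^2=F^2(H-4)$, so $F^2$ maps $V(\theta+4n)$ into $V(\theta+4(n-1))\subseteq W$.

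Combining these four inclusions shows $xW\subseteq W$ for each generator $x\in\{E^2,F^2,\Lambda,H\}$, and since these generate $\U_e$, the subspace $W$ is a $\U_e$-submodule of $V$. There is no serious obstacle here; the only point worth flagging is that the arithmetic step size in the direct sum is $4$ rather than $2$. This is precisely what guarantees closure under $E^2$ and $F^2$, whose commutators with $H$ shift eigenvalues by $\pm 4$, and it is exactly the reason the construction lives in the even subalgebra $\U_e$ rather than in all of $\U$.
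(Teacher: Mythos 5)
Your proof is correct, but it follows a different route from the paper's. You reduce the problem to checking invariance under the four generators $E^2,F^2,\Lambda,H$ of $\U_e$, which requires invoking the generation statement of Theorem~\ref{thm:Ue_generators} (equivalently Lemma~\ref{lem:Ue_basis}) --- a fact whose proof rests on the Poincar\'e--Birkhoff--Witt theorem. There is no circularity, since that theorem is established in \S\ref{sec:Ue} before the proposition is stated, and your eigenvalue-shift computations ($E^2$ shifts $H$-eigenvalues by $+4$, $F^2$ by $-4$, while $H$ and the central element $\Lambda$ stabilize each eigenspace) are all accurate. The paper instead works directly with the $\Z$-grading: from $EV(\theta)\subseteq V(\theta+2)$, $FV(\theta)\subseteq V(\theta-2)$ and $HV(\theta)\subseteq V(\theta)$ it deduces $U_nV(\theta)\subseteq V(\theta+2n)$ for all $n\in\Z$, and then the claim is immediate from Definition~\ref{defn:Ue}, since $\U_e=\bigoplus_{n\in\Z}U_{2n}$ consists of elements that shift eigenvalues by multiples of $4$. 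The paper's argument is lighter --- it uses only the defining relations \eqref{eq:U_relation_E}, \eqref{eq:U_relation_F} and the grading, with no appeal to the structure theory of \S\ref{sec:Ue} --- and so would survive even if the presentation theorem were unavailable; your argument buys a very concrete, generator-by-generator verification at the cost of importing a heavier result. Both are complete proofs.
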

\begin{proof}
Let $\theta\in \C$ be given. Clearly $HV(\theta)\subseteq V(\theta)$.
Using (\ref{eq:U_relation_E}) and (\ref{eq:U_relation_F}) yields that $EV(\theta)\subseteq V(\theta+2)$ and $FV(\theta)\subseteq V(\theta-2)$ respectively. It follows that
\begin{gather*}
U_nV(\theta)\subseteq V(\theta+2n)
\qquad
\hbox{for all $n\in\Z$}.
\end{gather*}
By Definition~\ref{defn:Ue} the proposition follows.
\end{proof}

Recall the $\U$-module $L_n$ ($n\in\N$) from Lemma~\ref{lem:Vn_U}.

\begin{defn}\label{defn:Ln0, Ln1}
\begin{enumerate}
  \item For each $n\in\N$ let
  $$
  L_n^{(0)}=\bigoplus_{i\in \Z}L_n(n-4i). $$
  \item For each integer $n\geq 1$ let
  $$
  L_n^{(1)}=\bigoplus_{i\in \Z}L_n(n-4i-2).
  $$
\end{enumerate}
\end{defn}

\begin{lem}
\label{lem:decLn}
For any $n\in \N$ the space $L_n$ is equal to
\begin{align*}
\left\{
\begin{array}{ll}
L_n^{(0)} \qquad &\hbox{if $n=0$},
\\
L_n^{(0)}\oplus L_n^{(1)}
 \qquad &\hbox{if $n\geq 1$}.
\end{array}
\right.
\end{align*}
\end{lem}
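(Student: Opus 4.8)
The plan is to reduce everything to the weight-space decomposition of $L_n$ with respect to $H$. By Lemma~\ref{lem:Vn_U}(i) the basis vector $v_i$ satisfies $Hv_i=(n-2i)v_i$, so each $v_i$ spans the weight space $L_n(n-2i)$; since the weights $n-2i$ $(0\leq i\leq n)$ are mutually distinct, the $v_i$ are the only $H$-eigenvectors up to scalars, and I obtain
\[
L_n=\bigoplus_{i=0}^{n}L_n(n-2i),
\qquad
\dim L_n(n-2i)=1,
\]
while $L_n(\theta)=0$ for every $\theta$ outside $\{n-2i\mid 0\leq i\leq n\}$.

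Next I would determine which of these one-dimensional weight spaces actually contribute to $L_n^{(0)}$ and $L_n^{(1)}$. In Definition~\ref{defn:Ln0, Ln1} the weight $n-4i$ occurring in $L_n^{(0)}$ coincides with $n-2j$ precisely when $j=2i$, i.e.\ exactly at the even indices $j$, and likewise the weight $n-4i-2$ in $L_n^{(1)}$ equals $n-2j$ precisely when $j=2i+1$, i.e.\ exactly at the odd indices $j$. Restricting to the range $0\leq j\leq n$ then yields
\[
L_n^{(0)}=\bigoplus_{\substack{0\leq j\leq n\\ j\ \text{even}}}L_n(n-2j),
\qquad
L_n^{(1)}=\bigoplus_{\substack{0\leq j\leq n\\ j\ \text{odd}}}L_n(n-2j);
\]
all the remaining summands $L_n(n-4i)$ and $L_n(n-4i-2)$ vanish by the previous paragraph, so the infinite direct sums of Definition~\ref{defn:Ln0, Ln1} collapse to these finite ones.

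Finally I would observe that every index $j\in\{0,1,\dots,n\}$ is either even or odd but never both, so the two families of weight spaces on the right-hand sides above partition exactly those appearing in the decomposition of $L_n$. This gives $L_n=L_n^{(0)}\oplus L_n^{(1)}$ whenever both families are nonempty, which happens precisely when $n\geq 1$ (as $j=0$ supplies an even index and $j=1\leq n$ an odd one). For $n=0$ the sole index is $j=0$, whence $L_0=L_0(0)=L_0^{(0)}$, consistent with $L_0^{(1)}$ not being defined. I do not expect a genuine obstacle here: the entire argument is parity bookkeeping over the $H$-weights, and the only point demanding mild care is the collapse of the infinite direct sums to finite ones, which is exactly the emptiness of $L_n(\theta)$ off the spectrum of $H$ on $L_n$.
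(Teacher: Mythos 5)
Your proof is correct and follows exactly the route the paper intends: its entire proof is ``Immediate from Lemma~\ref{lem:Vn_U}(i),'' and your argument is precisely the spelled-out version of that claim, namely the $H$-weight decomposition of $L_n$ into one-dimensional weight spaces and the parity split of the indices between $L_n^{(0)}$ and $L_n^{(1)}$.
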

\begin{proof}
Immediate from Lemma \ref{lem:Vn_U}(i).
\end{proof}

It follows from Proposition \ref{prop:V} that $L_n^{(0)}$ and $L_n^{(1)}$ are two $\U_{e}$-submodules of $L_n$.

\begin{lem}\label{lem:H-module Ln0}
For each $n\in\N$ the $\U_{e}$-module $L_n^{(0)}$ satisfies the following properties:
\begin{enumerate}
\item There exists a basis $\{u_i^{(0)}\}_{i=0}^{\floor{\frac{n}{2}}}$ for $L_n^{(0)}$ such that
        \begin{align*}
             E^2 u_{i}^{(0)} &=(n-2i+1)(n-2i+2) u_{i-1}^{(0)}
            \quad
            (1\leq i\leq \textstyle{\floor{\frac{n}{2}}}),
            \qquad
            E^2 u_0^{(0)}=0,
            \\
             F^2 u_{i}^{(0)} &=(2i+1)(2i+2) u_{i+1}^{(0)}
            \quad
            (0\leq i\leq \textstyle\floor{\frac{n}{2}}-1),
            \qquad
            F^2 u_{\floor{\frac{n}{2}}}^{(0)}=0,
            \\
             H u_{i}^{(0)} &=(n-4i) u_{i}^{(0)}
            \quad
            (0\leq i\leq \textstyle\floor{\frac{n}{2}}).
        \end{align*}

\item The element $\Lambda$ acts on $L_n^{(0)}$ as scalar multiplication by $\frac{n(n+2)}{2}$.
\end{enumerate}
\end{lem}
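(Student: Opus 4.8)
The plan is to extract an explicit basis for $L_n^{(0)}$ directly from the known action on $L_n$, and then compute the action of the generators $E^2,F^2,\Lambda,H$ of $\U_e$ on this basis. By Definition~\ref{defn:Ln0, Ln1}(i) and Lemma~\ref{lem:Vn_U}(i), the weight space $L_n(n-4i)$ is spanned by $v_{2i}$ whenever $0\leq 2i\leq n$, so the natural candidate basis is $\{v_{2i}\}$ with $i$ ranging over $0\leq i\leq \floor{\frac{n}{2}}$. I would therefore set $u_i^{(0)}=v_{2i}$ (up to a harmless renaming) and verify directly that these vectors span $L_n^{(0)}$ and form a basis, which is immediate since the $v_j$ are a basis for $L_n$ and the even-indexed ones are exactly those lying in the weight spaces $n-4i$.

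For part (i), the key step is to compute $E^2 v_{2i}$, $F^2 v_{2i}$, and $Hv_{2i}$ using the single-step formulas in Lemma~\ref{lem:Vn_U}(i). First I would apply $E$ twice: $E v_{2i}=(n-2i+1)v_{2i-1}$ and then $E v_{2i-1}=(n-2i+2)v_{2i-2}$, so that
\begin{align*}
E^2 v_{2i}=(n-2i+1)(n-2i+2)\,v_{2i-2},
\end{align*}
with the boundary case $E^2 v_0=0$ handled by $Ev_0=0$. Similarly, applying $F$ twice gives $F^2 v_{2i}=(2i+1)(2i+2)\,v_{2i+2}$, and the top boundary $F^2 u^{(0)}_{\floor{n/2}}=0$ follows because once the index $2i+2$ exceeds $n$ the relevant $F$-action vanishes (one must check the parity cases $n$ even versus $n$ odd to confirm the vanishing occurs precisely at $i=\floor{\frac{n}{2}}$). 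The $H$-action is read off directly as $Hv_{2i}=(n-4i)v_{2i}$. After the reindexing $u_i^{(0)}=v_{2i}$ these are exactly the three displayed formulas. Part (ii) is then immediate from Lemma~\ref{lem:Vn_U}(ii), since $\Lambda$ acts on all of $L_n$ as $\frac{n(n+2)}{2}$ and hence on the submodule $L_n^{(0)}$ by the same scalar.

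The only genuine subtlety, and the step I would treat most carefully, is the bookkeeping at the top boundary: one must confirm that $F^2 u^{(0)}_{\floor{n/2}}=0$ holds for both parities of $n$. When $n$ is even, $\floor{\frac{n}{2}}=\frac{n}{2}$ and $u^{(0)}_{n/2}=v_n$, so $F^2 v_n=0$ because already $Fv_n=0$. When $n$ is odd, $\floor{\frac{n}{2}}=\frac{n-1}{2}$ and $u^{(0)}_{(n-1)/2}=v_{n-1}$; here $Fv_{n-1}=nv_n$ is nonzero but $Fv_n=0$, so $F^2 v_{n-1}=0$ as required. Thus in both cases the coefficient $(2i+1)(2i+2)$ evaluated just below the top would have carried us out of the allowed range, and the vanishing is exactly as stated. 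Everything else is a routine substitution into the formulas of Lemma~\ref{lem:Vn_U}, so the proof reduces to this direct computation together with the boundary check.
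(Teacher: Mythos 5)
Your proposal is correct and follows essentially the same route as the paper: identify $L_n(n-4i)={\rm span}\{v_{2i}\}$ for $0\leq i\leq \floor{\frac{n}{2}}$, set $u_i^{(0)}=v_{2i}$, and verify the $E^2,F^2,H$ actions by iterating the single-step formulas of Lemma~\ref{lem:Vn_U}(i), with part (ii) immediate from Lemma~\ref{lem:Vn_U}(ii). The only difference is that you spell out the computation (including the parity check at the top boundary) that the paper dismisses as routine, and your boundary analysis is accurate.
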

\begin{proof}
(i): Recall the basis $\{v_i\}_{i=0}^n$ for $L_n$ from Lemma~\ref{lem:Vn_U}(i).
Observe that
$$
L_n(n-4i)=
\left\{
\begin{array}{ll}
{\rm span}\{v_{2i}\}
\qquad
&\hbox{if $i=0,1,\ldots,\floor{\frac{n}{2}}$},
\\
\{0\}
\qquad
&\hbox{else}
\end{array}
\right.
$$
for all $i\in \Z$.  Let $u_i^{(0)}=v_{2i}$ for $i=0,1,\ldots,\floor{\frac{n}{2}}$. By Definition~\ref{defn:Ln0, Ln1}(i) the $\U_{e}$-module $L_n^{(0)}$ has the basis $\{u_i^{(0)}\}_{i=0}^{\floor{\frac{n}{2}}}$. It is routine to verify the actions of $E^2,F^2,H$ by using Lemma~\ref{lem:Vn_U}(i).

(ii): Immediate from Lemma~\ref{lem:Vn_U}(ii).
\end{proof}

\begin{lem}\label{lem:Ln0 irreducible}
For any $n\in \N$ the $\U_{e}$-module $L_n^{(0)}$ is irreducible.
\end{lem}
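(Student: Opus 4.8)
The plan is to prove irreducibility of the $\U_e$-module $L_n^{(0)}$ directly from the explicit matrix action recorded in Lemma~\ref{lem:H-module Ln0}(i), exploiting the fact that $H$ acts with distinct eigenvalues on the basis $\{u_i^{(0)}\}_{i=0}^{\floor{n/2}}$. First I would let $W$ be a nonzero $\U_e$-submodule of $L_n^{(0)}$ and pick a nonzero $w\in W$. Writing $w=\sum_{i}c_i u_i^{(0)}$, the key observation is that the eigenvalues $n-4i$ of $H$ on the basis vectors are pairwise distinct as $i$ ranges over $0,1,\dots,\floor{n/2}$. Since $H\in\U_e$ and $W$ is $H$-invariant, a standard argument (applying the polynomial in $H$ that projects onto a single eigenspace, or equivalently using that each $H$-eigenspace meeting a nonzero submodule forces an eigenvector into $W$) shows that $W$ must contain at least one basis vector $u_j^{(0)}$.

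Next I would propagate from the single basis vector $u_j^{(0)}\in W$ to all of them using the raising and lowering operators $E^2,F^2\in\U_e$. From Lemma~\ref{lem:H-module Ln0}(i) the action of $E^2$ sends $u_i^{(0)}\mapsto (n-2i+1)(n-2i+2)\,u_{i-1}^{(0)}$ for $1\le i\le\floor{n/2}$, and the scalar coefficients $(n-2i+1)(n-2i+2)$ are all nonzero in that range; similarly $F^2$ raises the index with nonzero coefficients $(2i+1)(2i+2)$ for $0\le i\le\floor{n/2}-1$. Therefore, starting from $u_j^{(0)}$, repeated application of $E^2$ reaches $u_0^{(0)}$ (up to a nonzero scalar) and repeated application of $F^2$ reaches $u_{\floor{n/2}}^{(0)}$, and together these recover every $u_i^{(0)}$. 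Hence $W$ contains the whole basis, so $W=L_n^{(0)}$, proving irreducibility.

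The main thing to verify carefully is that the structure constants never vanish in the relevant index ranges, since an accidental zero would break the chain and could leave an invariant subspace. This is where I would pause: for $1\le i\le\floor{n/2}$ one has $n-2i+2\ge n-2\floor{n/2}+2\ge 2>0$, so both factors $(n-2i+1)(n-2i+2)$ are strictly positive; and $(2i+1)(2i+2)$ is manifestly positive for $i\ge 0$. Thus every step of the ascent and descent is invertible, and no obstruction arises. I expect this nonvanishing check to be the only delicate point; everything else is the routine eigenvector-extraction argument enabled by the simple ($H$-multiplicity-free) structure of $L_n^{(0)}$.

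Finally I would remark that the argument is uniform in $n\in\N$, including the degenerate case $n=0$ where $L_0^{(0)}$ is one-dimensional and irreducibility is trivial. The same template will apply to $L_n^{(1)}$ with the shifted eigenvalues $n-4i-2$, which is presumably handled in the companion lemma.
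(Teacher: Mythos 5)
Your proof is correct and follows essentially the same approach as the paper: use the one-dimensional $H$-eigenspaces to extract a basis vector $u_j^{(0)}$ into the submodule, then propagate via $E^2$ and $F^2$ whose structure constants $(n-2i+1)(n-2i+2)$ and $(2i+1)(2i+2)$ never vanish in the relevant ranges. The only cosmetic difference is that the paper takes the smallest index $j$ and derives a contradiction with $E^2$ before filling upward with $F^2$, whereas you propagate in both directions from an arbitrary $j$; the substance is identical.
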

\begin{proof}
Let $V$ denote a nonzero $\U_{e}$-submodule of $L_n^{(0)}$.
Note that each eigenspace of $H$ in $L_n^{(0)}$ has dimension one. Since $V$ is a nonzero $H$-invariant subspace of $L_n^{(0)}$ there exists the smallest integer $i$ with $0\leq i\leq \floor{\frac{n}{2}}$ such that $u_i^{(0)}\in V$.

Suppose that $i\geq 1$. Then $E^2 u_i^{(0)}=(n-2i+1)(n-2i+2) u_{i-1}^{(0)}\in V$.
Since $2i\not\in\{n+1,n+2\}$ it follows that $u_{i-1}^{(0)}\in V$, a contradiction. Hence $i=0$. Since $F^{2i} u_0^{(0)}= (2i)! u_i^{(0)}\in V$ for all $i=0,1,\ldots,\floor{\frac{n}{2}}$ it follows that $V=L_n^{(0)}$. The lemma follows.
\end{proof}

\begin{lem}\label{lem:Ln0_nonisomorphic}
The $\U_{e}$-modules $L_n^{(0)}$ for all $n\in \N$ are mutually non-isomorphic.
\end{lem}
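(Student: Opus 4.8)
The plan is to exhibit a single numerical isomorphism invariant that separates the modules $L_n^{(0)}$ for distinct values of $n$. The natural candidate is the scalar by which the Casimir element $\Lambda$ acts. First I would observe that $\Lambda\in U_0\subseteq\U_e$: indeed each of the three summands $EF,FE,\frac{H^2}{2}$ appearing in (\ref{U_casimir}) lies in $U_0$, so $\Lambda$ is a genuine element of the subalgebra $\U_e$ (Definition~\ref{defn:Ue}). Consequently, for any $\U_e$-module isomorphism $\varphi:L_m^{(0)}\to L_n^{(0)}$ the intertwining relation $\varphi(\Lambda v)=\Lambda\varphi(v)$ holds for all $v\in L_m^{(0)}$.

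Next I would feed in the computation already available. By Lemma~\ref{lem:H-module Ln0}(ii) the element $\Lambda$ acts on $L_n^{(0)}$ as scalar multiplication by $\frac{n(n+2)}{2}$. Hence, assuming $L_m^{(0)}\cong L_n^{(0)}$ as $\U_e$-modules, I would pick any nonzero $v\in L_m^{(0)}$ and apply the intertwining relation to obtain $\frac{m(m+2)}{2}\,\varphi(v)=\frac{n(n+2)}{2}\,\varphi(v)$; since $\varphi(v)\neq 0$ this forces $\frac{m(m+2)}{2}=\frac{n(n+2)}{2}$.

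Finally I would invoke the elementary fact that the map $n\mapsto\frac{n(n+2)}{2}=\frac{(n+1)^2-1}{2}$ is strictly increasing, hence injective, on $\N$. The equality of scalars from the previous step therefore yields $m=n$, which is exactly the assertion that the $L_n^{(0)}$ are mutually non-isomorphic.

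There is essentially no difficult step here: the whole argument rests on the two trivial verifications that $\Lambda$ lies in $\U_e$ and that $n\mapsto\frac{n(n+2)}{2}$ is injective on $\N$, together with the already established action in Lemma~\ref{lem:H-module Ln0}(ii). Should one prefer to avoid the Casimir element, an equally short alternative is available: since $H\in U_0\subseteq\U_e$ and, by Lemma~\ref{lem:H-module Ln0}(i), the largest eigenvalue of $H$ on $L_n^{(0)}$ equals $n$, the top $H$-eigenvalue again recovers $n$ as an isomorphism invariant and the conclusion follows in the same way.
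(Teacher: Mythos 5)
Your proof is correct and takes essentially the same route as the paper: both arguments pin down $n$ from the scalar $\frac{n(n+2)}{2}$ by which the central element $\Lambda\in U_0\subseteq\U_e$ acts on $L_n^{(0)}$, as recorded in Lemma~\ref{lem:H-module Ln0}(ii). The only difference is cosmetic: the paper first compares dimensions to restrict to $m\in\{n-1,n,n+1\}$ before invoking the Casimir scalar, while you note that the injectivity of $n\mapsto n(n+2)$ on $\N$ makes the Casimir scalar alone sufficient, a mild streamlining of the same idea.
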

\begin{proof}
Suppose that there are $m,n\in \N$ such that the $\U_{e}$-modules $L_m^{(0)}$ and $L_n^{(0)}$ are isomorphic. By Lemma \ref{lem:H-module Ln0}(i) the dimension of $L_m^{(0)}$ is $\floor{\frac{m}{2}}+1$ and the dimension of $L_n^{(0)}$ is $\floor{\frac{n}{2}}+1$. Hence $m\in \{n-1,n,n+1\}$.
It follows from Lemma \ref{lem:H-module Ln0}(ii) that
$m(m+2)=n(n+2)$. Using the above results yields that $m=n$. The lemma follows.
\end{proof}

\begin{lem}\label{lem:H-module Ln1}
For each integer $n\geq 1$ the $\U_{e}$-module $L_n^{(1)}$ satisfies the following properties:
\begin{enumerate}
\item   There exists a basis $\{u_i^{(1)}\}_{i=0}^{\floor{\frac{n-1}{2}}}$ for $L_n^{(1)}$ such that
        \begin{align*}
         E^2 u_{i}^{(1)} &=(n-2i)(n-2i+1) u_{i-1}^{(1)}
        \quad
        (1\leq i\leq \textstyle{\floor{\frac{n-1}{2}}}),
        \qquad
        E^2 u_0^{(1)}=0,
        \\
         F^2 u_{i}^{(1)} &=(2i+2)(2i+3) u_{i+1}^{(1)}
        \quad
        (0\leq i\leq \textstyle\floor{\frac{n-1}{2}}-1),
        \qquad
        F^2 u_{\floor{\frac{n-1}{2}}}^{(1)}=0,
        \\
         H u_{i}^{(1)} &=(n-4i-2) u_{i}^{(1)}
        \quad
        (0\leq i\leq \textstyle\floor{\frac{n-1}{2}}).
\end{align*}
\item   The element $\Lambda$ acts on $L_n^{(1)}$ as scalar multiplication by $\frac{n(n+2)}{2}$.
\end{enumerate}
\end{lem}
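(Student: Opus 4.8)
The plan is to run the proof of Lemma~\ref{lem:H-module Ln0} again, this time selecting the odd-indexed members of the basis $\{v_i\}_{i=0}^n$ of $L_n$ from Lemma~\ref{lem:Vn_U}(i). Since $H v_i=(n-2i)v_i$, the eigenvalue $n-4i-2$ is realized exactly by $v_{2i+1}$; thus $L_n(n-4i-2)$ is the one-dimensional span of $v_{2i+1}$ when $0\leq 2i+1\leq n$ and is $\{0\}$ otherwise. By Definition~\ref{defn:Ln0, Ln1}(ii) it follows that $L_n^{(1)}$ has basis $\{v_{2i+1}\}_{i=0}^{\floor{\frac{n-1}{2}}}$, so I would put $u_i^{(1)}=v_{2i+1}$ for $0\leq i\leq\floor{\frac{n-1}{2}}$.

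For part (i) I would verify the three displayed relations by applying the single-generator actions of Lemma~\ref{lem:Vn_U}(i) twice. For $E^2$ one finds $E^2 v_{2i+1}=(n-2i)E v_{2i}=(n-2i)(n-2i+1)v_{2i-1}=(n-2i)(n-2i+1)u_{i-1}^{(1)}$, with the case $i=0$ vanishing because $E v_0=0$. Dually, $F^2 v_{2i+1}=(2i+2)F v_{2i+2}=(2i+2)(2i+3)v_{2i+3}=(2i+2)(2i+3)u_{i+1}^{(1)}$, and the boundary relation $F^2 u_{\floor{\frac{n-1}{2}}}^{(1)}=0$ holds because the $F$-ladder terminates at $F v_n=0$. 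The $H$-action is read off directly from $H v_{2i+1}=(n-4i-2)v_{2i+1}$. Part (ii) is then immediate: $L_n^{(1)}$ is a $\U_e$-submodule of $L_n$, and $\Lambda$ acts as the scalar $\frac{n(n+2)}{2}$ on all of $L_n$ by Lemma~\ref{lem:Vn_U}(ii), hence in particular on $L_n^{(1)}$.

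There is no genuine obstacle; the computation is entirely parallel to Lemma~\ref{lem:H-module Ln0}, with $v_{2i}$ replaced by $v_{2i+1}$. The only matters requiring mild care are the bookkeeping of the index range---confirming that $2i+1$ stays in $\{0,1,\dots,n\}$ precisely for $0\leq i\leq\floor{\frac{n-1}{2}}$---and the two boundary cases (at $i=0$ for $E^2$ and at the top for $F^2$), the latter being handled uniformly by $F v_n=0$ regardless of the parity of $n$.
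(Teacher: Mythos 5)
Your proposal is correct and follows essentially the same route as the paper: identify $L_n(n-4i-2)$ as ${\rm span}\{v_{2i+1}\}$, set $u_i^{(1)}=v_{2i+1}$, verify the $E^2$, $F^2$, $H$ actions by applying Lemma~\ref{lem:Vn_U}(i) twice, and deduce (ii) directly from Lemma~\ref{lem:Vn_U}(ii). The only difference is that you write out the boundary and index-range checks that the paper dismisses as routine, and these are carried out correctly.
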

\begin{proof}
(i): Recall the basis $\{v_i\}_{i=0}^n$ for $L_n$ from Lemma~\ref{lem:Vn_U}(i). Observe that
$$
L_n(n-4i-2)=
\left\{
\begin{array}{ll}
{\rm span}\{v_{2i+1}\}
\qquad
&\hbox{if $i=0,1,\ldots,\floor{\frac{n-1}{2}}$},
\\
\{0\}
\qquad
&\hbox{else}
\end{array}
\right.
$$
for all $i\in \Z$. Let $u_i^{(1)}=v_{2i+1}$ for $i=0,1,\ldots,\floor{\frac{n-1}{2}}$. By Definition~\ref{defn:Ln0, Ln1}(ii) the $\U_{e}$-module $L_n^{(1)}$ has the basis $\{u_i^{(1)}\}_{i=0}^{\floor{\frac{n-1}{2}}}$. It is routine to verify the actions of $E^2,F^2,H$ by using Lemma~\ref{lem:Vn_U}(i).

(ii): Immediate from Lemma~\ref{lem:Vn_U}(ii).
\end{proof}

\begin{lem}\label{lem:Ln1 irreducible}
For any integer $n\geq 1$ the $\U_{e}$-module $L_n^{(1)}$ is irreducible.
\end{lem}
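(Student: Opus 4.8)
The plan is to mirror the proof of Lemma~\ref{lem:Ln0 irreducible}. First I would record that the $H$-eigenvalues $n-4i-2$ appearing in Lemma~\ref{lem:H-module Ln1}(i), as $i$ ranges over $0,1,\ldots,\floor{\frac{n-1}{2}}$, are pairwise distinct (they differ by multiples of $4$). Consequently each $H$-eigenspace in $L_n^{(1)}$ is one-dimensional and is spanned by a single basis vector $u_i^{(1)}$.

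Next, given a nonzero $\U_{e}$-submodule $V$ of $L_n^{(1)}$, I would exploit that $V$ is $H$-invariant, since $H\in \U_{e}$. As $H$ acts diagonalizably on $L_n^{(1)}$ with one-dimensional eigenspaces, $V$ is spanned by a subset of the $u_i^{(1)}$, and in particular contains at least one of them. Let $i$ be the smallest index with $u_i^{(1)}\in V$.

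The heart of the argument is to rule out $i\geq 1$. Applying $E^2$ to $u_i^{(1)}$ and invoking Lemma~\ref{lem:H-module Ln1}(i) gives $E^2 u_i^{(1)}=(n-2i)(n-2i+1)\,u_{i-1}^{(1)}\in V$, so it suffices to check that the scalar $(n-2i)(n-2i+1)$ is nonzero. This is precisely where the index range enters: for $1\leq i\leq \floor{\frac{n-1}{2}}$ one has $2i\leq n-1$, hence $n-2i\geq 1$ and both factors are positive. Thus $u_{i-1}^{(1)}\in V$, contradicting the minimality of $i$, and therefore $i=0$.

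Finally, I would generate all of $L_n^{(1)}$ from $u_0^{(1)}$: iterating the $F^2$-action in Lemma~\ref{lem:H-module Ln1}(i) yields $F^{2i}u_0^{(1)}=\bigl(\prod_{j=0}^{i-1}(2j+2)(2j+3)\bigr)u_i^{(1)}\in V$ with a nonzero coefficient, so every $u_i^{(1)}$ lies in $V$ and hence $V=L_n^{(1)}$. The only step requiring genuine care is the nonvanishing of the $E^2$-coefficient; unlike the $L_n^{(0)}$ case, where one excludes $2i\in\{n+1,n+2\}$, here it reduces to the clean inequality $2i\leq n-1$ forced by the index range, so no separate case analysis is needed.
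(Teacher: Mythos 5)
Your proof is correct and follows essentially the same argument as the paper: use the one-dimensional $H$-eigenspaces to locate a minimal $u_i^{(1)}$ in the submodule, rule out $i\geq 1$ via the nonvanishing of $(n-2i)(n-2i+1)$ (your inequality $2i\leq n-1$ is exactly why the paper's condition $2i\not\in\{n,n+1\}$ holds), and then generate everything from $u_0^{(1)}$ with powers of $F^2$, your product $\prod_{j=0}^{i-1}(2j+2)(2j+3)$ agreeing with the paper's $(2i+1)!$.
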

\begin{proof}
Let $V$ denote a nonzero $\U_{e}$-submodule of $L_n^{(1)}$.
Note that each eigenspace of $H$ on $L_n^{(1)}$ has dimension one. Since $V$ is a nonzero $H$-invariant subspace of $L_n^{(1)}$ there exists the smallest integer $i$ with $0\leq i\leq \floor{\frac{n-1}{2}}$ such that $u_i^{(1)}\in V$.

Suppose that $i\geq 1$. Then $E^2 u_i^{(1)}=(n-2i)(n-2i+1) u_{i-1}^{(1)}\in V$.
Since $2i\not\in\{n,n+1\}$ it follows that $u_{i-1}^{(1)}\in V$, a contradiction. Hence $i=0$. Since $F^{2i} u_0^{(1)}=(2i+1)! u_i^{(1)}\in V$ for all $i=0,1,\ldots,\floor{\frac{n-1}{2}}$ it follows that $V=L_n^{(1)}$. The lemma follows.
\end{proof}

\begin{lem}\label{lem:Ln1_nonisomorphic}
The $\U_{e}$-modules $L_n^{(1)}$ for all integers $n\geq 1$ are mutually non-isomorphic.
\end{lem}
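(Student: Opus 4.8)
The plan is to mirror the proof of Lemma~\ref{lem:Ln0_nonisomorphic}, using the dimension of each module together with the central character of $\Lambda$ as invariants that distinguish the modules. So suppose there are integers $m,n\geq 1$ for which the $\U_{e}$-modules $L_m^{(1)}$ and $L_n^{(1)}$ are isomorphic; the goal is to deduce $m=n$.

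First I would compute dimensions. By Lemma~\ref{lem:H-module Ln1}(i) the module $L_n^{(1)}$ has basis $\{u_i^{(1)}\}_{i=0}^{\floor{\frac{n-1}{2}}}$, so $\dim L_n^{(1)}=\floor{\frac{n-1}{2}}+1$. An isomorphism forces $\floor{\frac{m-1}{2}}=\floor{\frac{n-1}{2}}$. Since $\floor{\frac{n-1}{2}}=k$ holds exactly when $n\in\{2k+1,2k+2\}$, this equality confines $m$ and $n$ to a common block of two consecutive integers, whence $m\in\{n-1,n,n+1\}$.

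Next I would invoke the central element $\Lambda$ as a distinguishing invariant. By Lemma~\ref{lem:H-module Ln1}(ii) the element $\Lambda$ acts on $L_n^{(1)}$ as the scalar $\frac{n(n+2)}{2}$, so the assumed isomorphism yields $m(m+2)=n(n+2)$. Writing $t(t+2)=(t+1)^2-1$ shows that $t\mapsto t(t+2)$ is strictly increasing on $\N$, so this equation gives $m=n$ (and is a fortiori consistent with the range $m\in\{n-1,n,n+1\}$ found above), completing the argument.

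I do not expect any genuine obstacle here: the proof is entirely parallel to that of Lemma~\ref{lem:Ln0_nonisomorphic}, the only substantive change being the slightly different floor arithmetic coming from the basis $\{u_i^{(1)}\}_{i=0}^{\floor{\frac{n-1}{2}}}$ of Lemma~\ref{lem:H-module Ln1}(i) in place of $\{u_i^{(0)}\}_{i=0}^{\floor{\frac{n}{2}}}$ of Lemma~\ref{lem:H-module Ln0}(i). The only point requiring a moment's care is verifying that the floor equality really does pin $m$ to $\{n-1,n,n+1\}$, but even this intermediate step is ultimately subsumed by the $\Lambda$-character computation, so the dimension count serves mainly to parallel the structure of the earlier lemma rather than being logically indispensable.
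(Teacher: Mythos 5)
Your proposal is correct and follows essentially the same route as the paper's own proof: dimension count via Lemma~\ref{lem:H-module Ln1}(i) to confine $m$ to $\{n-1,n,n+1\}$, then the $\Lambda$-eigenvalue $\frac{n(n+2)}{2}$ from Lemma~\ref{lem:H-module Ln1}(ii) to force $m=n$. Your added observation that $t\mapsto t(t+2)$ is strictly increasing on $\N$, so the $\Lambda$-invariant alone already suffices, is a valid minor streamlining but does not change the argument.
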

\begin{proof}
Suppose that there are two integers $m,n\geq 1$ such that the $\U_{e}$-modules $L_m^{(1)}$ and $L_n^{(1)}$ are isomorphic. By Lemma \ref{lem:H-module Ln1}(i) the dimension of $L_m^{(1)}$ is $\floor{\frac{m-1}{2}}+1$ and the dimension of $L_n^{(1)}$ is $\floor{\frac{n-1}{2}}+1$. Hence $m\in \{n-1,n,n+1\}$.
It follows from Lemma \ref{lem:H-module Ln1}(ii) that
$m(m+2)=n(n+2)$. Using the above results yields that $m=n$. The lemma follows.
\end{proof}

\begin{thm}\label{thm:Ln0&Ln1 noniso}
The $\U_{e}$-modules $L_n^{(0)}$ for all $n\in\N$ and the  $\U_{e}$-modules $L_n^{(1)}$ for all integers $n\geq 1$ are mutually non-isomorphic.
\end{thm}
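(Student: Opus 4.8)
The plan is to reduce the statement to a single cross-family comparison and then separate the two families by module invariants that any $\U_{e}$-isomorphism must preserve. Lemmas \ref{lem:Ln0_nonisomorphic} and \ref{lem:Ln1_nonisomorphic} already give that the $L_n^{(0)}$ ($n\in\N$) are mutually non-isomorphic and that the $L_n^{(1)}$ ($n\geq 1$) are mutually non-isomorphic, so it remains only to show $L_m^{(0)}\not\cong L_n^{(1)}$ for all $m\in\N$ and all integers $n\geq 1$. I would use two invariants: the scalar by which the central element $\Lambda$ acts, and the maximum eigenvalue of $H$. Both $\Lambda$ and $H$ lie in $\U_{e}$—indeed they are among the generators furnished by Theorem \ref{thm:Ue_generators}—so a $\U_{e}$-module isomorphism must respect the $\Lambda$-scalar and the entire $H$-spectrum.

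Concretely, I would assume an isomorphism $\phi:L_m^{(0)}\to L_n^{(1)}$ and derive a contradiction in two steps. From Lemmas \ref{lem:H-module Ln0}(ii) and \ref{lem:H-module Ln1}(ii), $\Lambda$ acts as $\frac{m(m+2)}{2}$ on $L_m^{(0)}$ and as $\frac{n(n+2)}{2}$ on $L_n^{(1)}$; equating these and using that $x\mapsto x(x+2)=(x+1)^2-1$ is injective on $\N$ forces $m=n$. Next, since $\phi$ intertwines the action of $H$, it carries each $H$-eigenspace onto the eigenspace for the same eigenvalue, so the two modules share the same $H$-spectrum. By Lemma \ref{lem:H-module Ln0}(i) the eigenvalues of $H$ on $L_m^{(0)}$ are $m-4i$ for $0\leq i\leq\floor{\frac{m}{2}}$, with maximum $m$, whereas by Lemma \ref{lem:H-module Ln1}(i) those on $L_n^{(1)}$ are $n-4i-2$ for $0\leq i\leq\floor{\frac{n-1}{2}}$, with maximum $n-2$. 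Matching the maxima gives $m=n-2$, which together with $m=n$ is absurd. Hence $L_m^{(0)}\not\cong L_n^{(1)}$, and combining this with Lemmas \ref{lem:Ln0_nonisomorphic} and \ref{lem:Ln1_nonisomorphic} completes the proof.

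The main obstacle, and the reason a coarser argument will not suffice, is the comparison in the case $m=n$ with $n$ odd: there $L_n^{(0)}$ and $L_n^{(1)}$ have the same dimension $\frac{n+1}{2}$ and, as just noted, the same $\Lambda$-eigenvalue, so the dimension-plus-Casimir bookkeeping used within each family in Lemmas \ref{lem:Ln0_nonisomorphic} and \ref{lem:Ln1_nonisomorphic} cannot separate them. I therefore expect the crux to be an invariant sensitive to the \emph{location} of the $H$-eigenvalues rather than merely their number; the maximal $H$-eigenvalue, which differs by exactly $2$ across the two families, is the most economical such invariant and is what the plan above exploits. Equivalently, one could observe that every $H$-eigenvalue on $L_n^{(0)}$ is $\equiv n$ while every one on $L_n^{(1)}$ is $\equiv n-2\pmod 4$, so once $m=n$ is known the spectra cannot coincide.
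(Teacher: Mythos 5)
Your proposal is correct and follows essentially the same route as the paper: both reduce to the cross-family case via Lemmas \ref{lem:Ln0_nonisomorphic} and \ref{lem:Ln1_nonisomorphic}, force $m=n$ from the $\Lambda$-scalars in Lemmas \ref{lem:H-module Ln0}(ii) and \ref{lem:H-module Ln1}(ii), and then derive a contradiction from the $H$-spectra (the paper observes $m$ is an $H$-eigenvalue of $L_m^{(0)}$ but not of $L_n^{(1)}$, which is the same spectral separation you get from comparing maximal eigenvalues). Your only deviations are cosmetic: you skip the paper's preliminary dimension count, which is indeed redundant since $x\mapsto x(x+2)$ is injective on $\N$.
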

\begin{proof}
Pick any two integers $m,n$ with $m\in \N$ and $n\geq 1$. Suppose that the $\U_{e}$-modules $L_m^{(0)}$ and $L_n^{(1)}$ are isomorphic.
By Lemma \ref{lem:H-module Ln0}(i) the dimension of $L_m^{(0)}$ is $\floor{\frac{m}{2}}+1$. By Lemma \ref{lem:H-module Ln1}(i) the dimension of $L_n^{(1)}$ is $\floor{\frac{n-1}{2}}+1$. Hence $m\in \{n-2,n-1,n\}$.
It follows from Lemmas \ref{lem:H-module Ln0}(ii) and \ref{lem:H-module Ln1}(ii) that
$m(m+2)=n(n+2)$. Using the above results yields that $m=n$. By Lemma \ref{lem:H-module Ln0}(i) the element $H$ has the eigenvalue $m$ in $L_m^{(0)}$. However $m$ is not an eigenvalue of $H$ in $L_n^{(1)}$ by Lemma \ref{lem:H-module Ln1}(i), a contradiction. Hence the $\U_{e}$-modules $L_m^{(0)}$ and $L_n^{(1)}$ are non-isomorphic. Combined with Lemmas \ref{lem:Ln0_nonisomorphic} and \ref{lem:Ln1_nonisomorphic} this implies Theorem \ref{thm:Ln0&Ln1 noniso}.
\end{proof}

The proof of Theorem~\ref{thm:decomposition_Ln} is given below.

\medskip

\noindent {\it Proof of Theorem~\ref{thm:decomposition_Ln}.}
Recall from Theorem \ref{thm:image of natural}(i) that ${\rm Im}\,\natural=\U_{e}$. Hence the $\H$-module $L_n^{(0)}$ $(n\in \N)$ is irreducible by Lemma \ref{lem:Ln0 irreducible} and the $\H$-module $L_n^{(1)}$ $(n\geq 1)$ is irreducible by Lemma \ref{lem:Ln1 irreducible}. By Theorem \ref{thm:Ln0&Ln1 noniso} the $\H$-modules $L_n^{(0)}$ for all $n\in \N$ and the $\H$-modules $L_n^{(1)}$ for all integers $n\geq 1$ are mutually non-isomorphic. Theorem~\ref{thm:decomposition_Ln} follows by Lemma \ref{lem:decLn} and the above comments.
\hfill $\square$

\medskip

We finish this section with a classification of the finite-dimensional irreducible $\U_{e}$-modules.

\begin{thm}
\label{thm:Ue-module}
For any $d\in \N$ the $\U_{e}$-modules $L_{2d}^{(0)}$, $L_{2d+1}^{(0)}$, $L_{2d+1}^{(1)}$, $L_{2d+2}^{(1)}$ are all $(d+1)$-dimensional irreducible $\U_{e}$-modules up to isomorphism.
\end{thm}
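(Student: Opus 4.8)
The plan is to establish completeness on top of the structural facts already proved for the four families. By the dimension counts in Lemmas~\ref{lem:H-module Ln0}(i) and \ref{lem:H-module Ln1}(i) each of $L_{2d}^{(0)},L_{2d+1}^{(0)},L_{2d+1}^{(1)},L_{2d+2}^{(1)}$ has dimension $d+1$; they are irreducible by Lemmas~\ref{lem:Ln0 irreducible} and \ref{lem:Ln1 irreducible}, and mutually non-isomorphic by Theorem~\ref{thm:Ln0&Ln1 noniso}. Hence it remains only to show that every $(d+1)$-dimensional irreducible $\U_{e}$-module $V$ is isomorphic to one of these four. Throughout I would use the presentation of $\U_{e}$ in Theorem~\ref{thm:Ue_generators}, abbreviating $P(H)=(H^2-2H-2\Lambda)(H^2-6H-2\Lambda+8)$ and $Q(H)=(H^2+2H-2\Lambda)(H^2+6H-2\Lambda+8)$, so that (\ref{Ue3}) and (\ref{Ue4}) become $16E^2F^2=P(H)$ and $16F^2E^2=Q(H)$; observe that $Q(x)=P(-x)$. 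Since $\Lambda$ is central, Schur's lemma gives a scalar $\lambda\in\C$ with $\Lambda|_V=\lambda$, and I set $s=\sqrt{1+2\lambda}$, so the four roots of $P$ are $1\pm s,3\pm s$ and those of $Q$ are $-1\pm s,-3\pm s$.

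Next I would run the standard highest-weight analysis adapted to the step-$4$ ladder coming from (\ref{Ue1})--(\ref{Ue2}). As $\C$ is algebraically closed, $H$ has an eigenvector, and because $E^2$ raises the $H$-eigenvalue by $4$, iterating $E^2$ and using $\dim V<\infty$ produces a vector $w_0$ with $E^2w_0=0$ and $Hw_0=\mu w_0$. Putting $w_i=(F^2)^iw_0$ and using (\ref{Ue2}), each nonzero $w_i$ is an $H$-eigenvector of eigenvalue $\mu-4i$, so the nonzero $w_i$ are independent; let $w_\ell$ be the last nonzero one. From $16E^2F^2=P(H)$ one gets $E^2w_i=\tfrac1{16}P(\mu-4(i-1))w_{i-1}$, which together with the evident actions of $H,F^2,\Lambda$ shows $\mathrm{span}\{w_0,\dots,w_\ell\}$ is a $\U_{e}$-submodule; irreducibility forces it to be all of $V$, so $\ell=d$, the operator $H$ is diagonalizable with simple spectrum $\mu,\mu-4,\dots,\mu-4d$, and the whole action is fixed by the data $(\lambda,\mu,d)$. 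In particular any two irreducible $\U_{e}$-modules sharing these data are isomorphic, since the structure constants $\tfrac1{16}P(\mu-4(i-1))$ and the normalizable $F^2$-action agree.

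Finally I would pin down $(\lambda,\mu)$. Evaluating $16F^2E^2=Q(H)$ at $w_0$ (where $E^2w_0=0$) yields $Q(\mu)=0$, and evaluating $16E^2F^2=P(H)$ at $w_d$ (where $F^2w_d=0$) yields $P(\mu-4d)=0$; thus $\mu\in\{-1\pm s,-3\pm s\}$ and $\mu-4d\in\{1\pm s,3\pm s\}$. Subtracting, $4d$ equals a difference of a root of $Q$ and a root of $P$: the $s$-free differences are $-2,-4,-6$, excluded by $4d\ge0$, while the remaining differences force (up to the sign ambiguity of $s$, which does not affect $\lambda$) exactly $s\in\{2d+1,2d+2,2d+3\}$, i.e.\ $1+2\lambda=(n+1)^2$ with $n\in\{2d,2d+1,2d+2\}$. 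Reading off $\mu$ in each solution and comparing with the highest weights $\mu$ and Casimir scalars $\tfrac{n(n+2)}2$ recorded in Lemmas~\ref{lem:H-module Ln0} and \ref{lem:H-module Ln1} matches the four solutions bijectively with $L_{2d}^{(0)},L_{2d+1}^{(0)},L_{2d+1}^{(1)},L_{2d+2}^{(1)}$; by the uniqueness from the previous paragraph $V$ is isomorphic to the corresponding module. I expect the main obstacle to be exactly this case analysis: keeping track of which choice of root gives the highest weight $\mu$, confirming via a short congruence-mod-$4$ check that no intermediate $P(\mu-4j)$ with $0\le j<d$ vanishes (so that the string does not break and the dimension is genuinely $d+1$), and verifying that the two solutions at $s=2d+2$ separate into the type-$(0)$ and type-$(1)$ modules while $n=2d$ and $n=2d+2$ each contribute only a single module of the correct dimension.
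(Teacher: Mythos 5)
Your proposal is correct and takes essentially the same route as the paper's proof: Schur's lemma for $\Lambda$, a highest-weight vector killed by $E^2$, the $F^2$-string forced to span $V$ by irreducibility, and evaluation of (\ref{Ue3}), (\ref{Ue4}) at the two ends of the string to pin down the four possible pairs $(\lambda,\mu)$, which are then matched with $L_{2d}^{(0)}$, $L_{2d+1}^{(0)}$, $L_{2d+1}^{(1)}$, $L_{2d+2}^{(1)}$. The only differences are cosmetic (you organize the case analysis via the roots $1\pm s,3\pm s$ and differences of roots, where the paper solves the four pairs of quadratics for $\theta$ directly), and the mod-$4$ non-vanishing check you worry about is not needed: irreducibility alone forces the string to have length exactly $d+1$, and the structure constants $\tfrac{1}{16}P(\mu-4(i-1))$ agree automatically on both modules being compared.
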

\begin{proof}
Let $d\in \N$ be given. Suppose that $V$ is a $(d+1)$-dimensional irreducible $\U_{e}$-module. Since $\C$ is algebraically closed and the $\U_{e}$-module $V$ is finite-dimensional, it follows from Schur's lemma that the central element $\Lambda$ acts on $V$ as scalar multiplication by some scalar $\lambda\in \C$. In addition, there exists a scalar $\theta\in \C$ such that $\theta$ is an eigenvalue of $H$ but $\theta+4$ is not an eigenvalue of $H$ in $V$.

Let $w$ denote a $\theta$-eigenvector of $\H$ in $V$. Set
\begin{gather}\label{wi}
w_i=F^{2i} w
\qquad
\hbox{for all $i\in \N$}.
\end{gather}
Applying $w_0$ to either side of Lemma \ref{lem:HE^n,HF^n}(ii) with even $n$ yields that
\begin{gather}\label{Hwi}
H w_i=(\theta-4i) w_i
\qquad
\hbox{for all $i\in \N$}.
\end{gather}
Applying $w_0$ to either side of (\ref{Ue1}) yields that $HE^2w_0=(\theta+4)E^2 w_0$.
By the choice of $\theta$ it follows that
\begin{gather}\label{E2w0}
E^2 w_0=0.
\end{gather}
We apply $w_{i-1}$ ($i\geq 1$) to either side of (\ref{Ue3}) and use the equation (\ref{Hwi}) to evaluate the resulting equation. It follows that
\begin{gather}\label{E2wi}
E^2 w_i\in {\rm span}\{w_{i-1}\}
\qquad
\hbox{for all integers $i\geq 1$}.
\end{gather}
By (\ref{Hwi}) the nonzero vectors among $\{w_i\}_{i\in \N}$ are linearly independent.
Since $V$ is finite-dimensional there are only finitely many indices $i$ such that $w_i\not=0$. Since $w_0\not=0$ there is an integer $d'\geq 0$ such that $w_i\not=0$ for all $i=0,1,\ldots,d'$ and $w_{d'+1}=0$. Let $W$ denote the subspace of $V$ spanned by $\{w_i\}_{i=0}^{d'}$.
By (\ref{wi}) the space $W$ is $F^2$-invariant.
By (\ref{E2w0}) and (\ref{E2wi}) the space $W$ is $E^2$-invariant.
By (\ref{Hwi}) the space $W$ is $H$-invariant.
Hence $W$ is a nonzero $\U_{e}$-submodule of $V$. By the irreducibility of $V$ it follows that $d'=d$. In other words the vectors $\{w_i\}_{i=0}^d$ form a basis for $V$. Note that
\begin{gather}\label{F2wd}
F^2 w_d=0.
\end{gather}

We apply $w_0$ to either side of (\ref{Ue4}) and evaluate the resulting equation by using (\ref{Hwi}) and (\ref{E2w0}). It follows that
\begin{gather}\label{lambda1}
\lambda=\frac{\theta(\theta+2)}{2}
\end{gather}
or
\begin{gather}\label{lambda2}
\lambda=4+\frac{\theta(\theta+6)}{2}.
\end{gather}
We apply $w_d$ to either side of (\ref{Ue3}) and evaluate the resulting equation by using (\ref{Hwi}) and (\ref{F2wd}). It follows that
\begin{gather}\label{lambda3}
\lambda=\frac{(\theta-4d)(\theta-4d-2)}{2}
\end{gather}
or
\begin{gather}\label{lambda4}
\lambda=4+\frac{(\theta-4d)(\theta-4d-6)}{2}.
\end{gather}
Hence there are only the following four possibilities for $V$:

Case 1: The equations (\ref{lambda1}) and (\ref{lambda3}) hold. Solving (\ref{lambda1}) and (\ref{lambda3}) for $\theta$ yields that
\begin{gather}\label{theta-a}
\theta=2d.
\end{gather}
We apply $w_{i-1}$ $(1\leq i\leq d)$ to either side of (\ref{Ue3}) and evaluate the resulting equation by using (\ref{wi}), (\ref{Hwi}), (\ref{lambda1}) and (\ref{theta-a}). It follows that
$$
E^2 w_i=4i(2i-1)(d-i+1)(2d-2i+1) w_{i-1}
\qquad
(1\leq i\leq d).
$$
By Lemma \ref{lem:H-module Ln0} there exists a unique $\U_{e}$-module isomorphism $V\to L_{2d}^{(0)}$ that maps $w_i$ to $(2i)! u_i^{(0)}$ for $i=0,1,\ldots,d$.

Case 2: The equations (\ref{lambda1}) and (\ref{lambda4}) hold. Solving (\ref{lambda1}) and (\ref{lambda4}) for $\theta$ yields that
\begin{gather}\label{theta-b}
\theta=2d+1.
\end{gather}
We apply $w_{i-1}$ $(1\leq i\leq d)$ to either side of (\ref{Ue3}) and evaluate the resulting equation by using (\ref{wi}), (\ref{Hwi}), (\ref{lambda1}) and (\ref{theta-b}). It follows that
$$
E^2 w_i=
4i (2i-1)(d-i+1)(2d-2i+3)
w_{i-1}
\qquad
(1\leq i\leq d).
$$
By Lemma \ref{lem:H-module Ln0} there exists a unique $\U_{e}$-module isomorphism $V\to L_{2d+1}^{(0)}$ that maps $w_i$ to $(2i)! u_i^{(0)}$ for $i=0,1,\ldots,d$.

Case 3: The equations (\ref{lambda2}) and (\ref{lambda3}) hold. Solving (\ref{lambda2}) and (\ref{lambda3}) for $\theta$ yields that
\begin{gather}\label{theta-c}
\theta=2d-1.
\end{gather}
We apply $w_{i-1}$ $(1\leq i\leq d)$ to either side of (\ref{Ue3}) and evaluate the resulting equation by using (\ref{wi}), (\ref{Hwi}), (\ref{lambda2}) and (\ref{theta-c}). It follows that
$$
E^2 w_i=
4i (2i+1)(d-i+1)(2d-2i+1)
w_{i-1}
\qquad
(1\leq i\leq d).
$$
By Lemma \ref{lem:H-module Ln1} there exists a unique $\U_{e}$-module isomorphism $V\to L_{2d+1}^{(1)}$ that maps $w_i$ to $(2i+1)! u_i^{(1)}$ for $i=0,1,\ldots,d$.

Case 4: The equations (\ref{lambda2}) and (\ref{lambda4}) hold. Solving (\ref{lambda2}) and (\ref{lambda4}) for $\theta$ yields that
\begin{gather}\label{theta-d}
\theta=2d.
\end{gather}
We apply $w_{i-1}$ $(1\leq i\leq d)$ to either side of (\ref{Ue3}) and evaluate the resulting equation by using (\ref{wi}), (\ref{Hwi}), (\ref{lambda2}) and (\ref{theta-d}). It follows that
$$
E^2 w_i=
4i (2i+1)(d-i+1)(2d-2i+3)
w_{i-1}
\qquad
(1\leq i\leq d).
$$
By Lemma \ref{lem:H-module Ln1} there exists a unique $\U_{e}$-module isomorphism $V\to L_{2d+2}^{(1)}$ that maps $w_i$ to $(2i+1)! u_i^{(1)}$ for $i=0,1,\ldots,d$.
\end{proof}



\section{The connection to the Terwilliger algebras of $H(D,2)$ and $\frac{1}{2}H(D,2)$}\label{sec:Terwilliger lagebra}

Fix an integer $D\geq 2$. Let $X=X(D)$ denote the
set
$$
\{(x_1,x_2,\ldots,x_D)\,|\,x_1,x_2,\ldots,x_D\in\{0,1\}\}.
$$
For any $x\in X$ let $x_i$ $(1\leq i\leq D)$ denote the $i^{\rm \, th}$ coordinate of $x$.
Recall that the {\it $D$-dimensional hypercube} $H(D,2)$ is a finite simple connected graph with vertex set $X$ and $x,y\in X$ are adjacent if and only if there exists exactly one index $i$ $(1\leq i\leq D)$ such that $x_i\not=y_i$. For any $x,y\in X$ let $\partial(x,y)$ denote the distance from $x$ to $y$ in $H(D,2)$. Note that
$\partial(x,y)=|\{i\,|\, 1\leq i\leq D, x_i\not=y_i\}|$.

The adjacency operator $\A$ of $H(D,2)$ is a linear endomorphism of $\C^X$ given by
\begin{equation*}
\A x=
\sum_{\substack{y\in X\\ \partial(x,y)=1}} y
\qquad
\hbox{for all $x\in X$}.
\end{equation*}
Let $x\in X$ be given. By \cite{BannaiIto1984,TerAlgebraI,TerAlgebraII,TerAlgebraIII}  the dual adjacency operator $\A^*(x)$ of $H(D,2)$ with respect to $x$ is a linear endomorphism of $\C^X$ given by
$$
\A^*(x) y=(D-2\partial(x,y)) y
\qquad\hbox{for all $y\in X$}.
$$
The {\it Terwilliger algebra $\T(x)$ of $H(D,2)$ with respect to $x$} \cite{Hamming:2021,TerAlgebraI,TerAlgebraII,TerAlgebraIII,hypercube2002,Lthypercube,Hamming2006,Huang:CG&Hamming} is the subalgebra of ${\rm End}(\C^X)$ generated by $\A$ and $\A^*(x)$.

\begin{thm}[Theorem 13.2, \cite{hypercube2002}]\label{thm:UtoT(hypercube)}
 For each $x\in X$ there exists a unique algebra homomorphism $\rho(x):\U\to \T(x)$ that sends
  \begin{eqnarray*}
E &\mapsto &
\displaystyle \frac{\A}{2}-\frac{[\A,\A^*(x)]}{4},
\\
F &\mapsto &
\displaystyle \frac{\A}{2}+\frac{[\A,\A^*(x)]}{4},
\\
H &\mapsto & \A^*(x).
\end{eqnarray*}
\end{thm}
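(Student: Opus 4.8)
The plan is to check that the three proposed images satisfy the defining relations of $\U$ and then invoke the universal property. Write $\mathcal{E},\mathcal{F},\mathcal{H}$ for the right-hand sides of the three assignments, so that $\mathcal{H}=\A^*(x)$ and $\mathcal{E},\mathcal{F}=\frac{\A}{2}\mp\frac{[\A,\A^*(x)]}{4}$. By Definition \ref{defn:U}, the existence of $\rho(x)$ is equivalent to the three identities
\begin{gather*}
[\mathcal{H},\mathcal{E}]=2\mathcal{E},
\qquad
[\mathcal{H},\mathcal{F}]=-2\mathcal{F},
\qquad
[\mathcal{E},\mathcal{F}]=\mathcal{H}
\end{gather*}
holding in $\T(x)$; and since $\U$ is generated by $E,F,H$, uniqueness is then automatic. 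So the entire content lies in these three relations.

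The whole computation rests on a single pair of double-commutator relations in $\T(x)$, abbreviating $\A^*=\A^*(x)$:
\begin{gather*}
[\A^*,[\A^*,\A]]=4\A,
\qquad
[\A,[\A,\A^*]]=4\A^*.
\end{gather*}
To establish these I would use the tensor decomposition $\C^X\cong(\C^2)^{\otimes D}$ identifying a vertex $(x_1,\dots,x_D)$ with $e_{x_1}\otimes\cdots\otimes e_{x_D}$. Since $H(D,2)=(K_2)^{\square D}$ and $\partial(x,y)=|\{i:x_i\neq y_i\}|$, under this identification $\A=\sum_{i=1}^{D}\mathsf{X}_i$ with $\mathsf{X}=\bigl(\begin{smallmatrix}0&1\\1&0\end{smallmatrix}\bigr)$ acting on the $i$-th factor, while $\A^*(x)=\sum_{i=1}^{D}\epsilon_i\mathsf{Z}_i$ with $\mathsf{Z}=\bigl(\begin{smallmatrix}1&0\\0&-1\end{smallmatrix}\bigr)$ and $\epsilon_i=+1$ or $-1$ according as $x_i=0$ or $x_i=1$. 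Operators on distinct factors commute, so every commutator reduces to single-factor $2\times2$ computations: one finds $[\mathsf{X},\mathsf{Z}]=2\mathsf{Y}$ with $\mathsf{Y}=\bigl(\begin{smallmatrix}0&-1\\1&0\end{smallmatrix}\bigr)$, and then $[\mathsf{Z},\mathsf{Y}]=-2\mathsf{X}$ and $[\mathsf{X},\mathsf{Y}]=2\mathsf{Z}$. Substituting and using $\epsilon_i^2=1$ (so the base-vertex signs cancel) yields the two displayed relations for every $x$.

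With these in hand the three relations of $\U$ become purely formal. For instance, using $[\A^*,\A]=-[\A,\A^*]$ and $[\A^*,[\A,\A^*]]=-[\A^*,[\A^*,\A]]=-4\A$, one gets
\begin{gather*}
[\mathcal{H},\mathcal{E}]=\tfrac12[\A^*,\A]-\tfrac14[\A^*,[\A,\A^*]]=-\tfrac12[\A,\A^*]+\A=2\mathcal{E},
\end{gather*}
and symmetrically $[\mathcal{H},\mathcal{F}]=-2\mathcal{F}$; while
\begin{gather*}
[\mathcal{E},\mathcal{F}]=2\bigl[\tfrac{\A}{2},\tfrac{[\A,\A^*]}{4}\bigr]=\tfrac14[\A,[\A,\A^*]]=\A^*=\mathcal{H}.
\end{gather*}
The main obstacle is therefore establishing the two double-commutator relations, where all the combinatorial content of the hypercube is concentrated; the tensor realization makes this routine, but one could instead verify them directly from the actions $\A y=\sum_{\partial(y,z)=1}z$ and $\A^*(x)y=(D-2\partial(x,y))y$ on the basis $X$, at the cost of tracking how $\partial(x,\cdot)$ changes under single-coordinate flips. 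Base-vertex independence is automatic because the scalars $\epsilon_i$ enter only through $\epsilon_i^2=1$, equivalently because the coordinate-flip automorphisms of $H(D,2)$ fix $\A$ while conjugating $\A^*(x)$ to $\A^*(x')$.
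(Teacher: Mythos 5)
Your proof is correct, but be aware that the paper itself offers no proof of this statement: it is imported wholesale as Theorem 13.2 of \cite{hypercube2002}, so what you have written is a self-contained verification of a result the authors only cite. Your argument is sound at every step: the reduction of existence/uniqueness to checking the defining relations (\ref{eq:U_relation_E})--(\ref{eq:U_relation_H}) on the proposed images is exactly the universal-property argument one wants; the tensor identification $\C^X\cong(\C^2)^{\otimes D}$ with $\A=\sum_i \mathsf{X}_i$ and $\A^*(x)=\sum_i\epsilon_i\mathsf{Z}_i$ is the right model of $H(D,2)$ as a Cartesian power of $K_2$; the single-factor commutators and the two double-commutator identities $[\A^*(x),[\A^*(x),\A]]=4\A$ and $[\A,[\A,\A^*(x)]]=4\A^*(x)$ check out (the signs $\epsilon_i$ indeed cancel in pairs); and the final formal derivation of the three $U(\mathfrak{sl}_2)$ relations from these two identities is computed correctly. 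For comparison, the proof in the cited source takes a different, more combinatorial route: it works with the distance partition of $X$ relative to $x$, splits $\A=L+R$ into its lowering and raising parts (so that $[\A,\A^*(x)]=2R-2L$, whence the images of $E$ and $F$ are precisely $L$ and $R$), and verifies the relations on the standard module basis. That formulation buys a structural interpretation that is what ultimately feeds into module decompositions such as Theorem \ref{thm:Umodule CX}; your Pauli-matrix factorization buys brevity and makes both the double-commutator identities and the independence of the base vertex $x$ completely transparent. Either route is acceptable; yours is arguably the cleaner way to prove the bare statement.
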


\begin{lem}
\label{lem:HtoT(hypercube)}
For each $x\in X$ the algebra homomorphism $\rho(x)\circ \natural:\H\to \T(x)$ maps
  \begin{eqnarray*}
A & \mapsto & \frac{\A^*(x)}{4},
\\
B & \mapsto & \frac{\A^2-1}{4}.
\end{eqnarray*}
\end{lem}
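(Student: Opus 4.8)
The plan is to evaluate the composite $\rho(x)\circ\natural$ on the two generators $A$ and $B$ by simply stacking the explicit formulas from Theorems~\ref{thm:HtoU} and~\ref{thm:UtoT(hypercube)}. First I would dispose of $A$, which is immediate: by~(\ref{A}) we have $\natural(A)=\frac{H}{4}$, and $\rho(x)$ sends $H$ to $\A^*(x)$, so $(\rho(x)\circ\natural)(A)=\frac{\A^*(x)}{4}$ with no computation required.

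The substantive case is $B$, where by~(\ref{B}) we have $\natural(B)=\frac{E^2+F^2+\Lambda-1}{4}-\frac{H^2}{8}$. The one step worth isolating is an algebraic repackaging of the numerator using the definition~(\ref{U_casimir}) of the Casimir element: since $\Lambda=EF+FE+\frac{H^2}{2}$ and $E^2+EF+FE+F^2=(E+F)^2$, we obtain in $\U$ the identity
\begin{equation*}
E^2+F^2+\Lambda=(E+F)^2+\frac{H^2}{2}.
\end{equation*}
Applying $\rho(x)$ is now transparent: from Theorem~\ref{thm:UtoT(hypercube)} the two commutator terms in the images of $E$ and $F$ cancel in the sum, so $\rho(x)(E+F)=\A$ and hence $\rho(x)\bigl((E+F)^2\bigr)=\A^2$; combined with $\rho(x)(H)=\A^*(x)$ this yields
\begin{equation*}
\rho(x)(E^2+F^2+\Lambda)=\A^2+\frac{(\A^*(x))^2}{2}.
\end{equation*}

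Substituting into the image of $B$ then gives
\begin{equation*}
(\rho(x)\circ\natural)(B)=\frac{\A^2+\frac{(\A^*(x))^2}{2}-1}{4}-\frac{(\A^*(x))^2}{8}=\frac{\A^2-1}{4},
\end{equation*}
the two $(\A^*(x))^2$ contributions cancelling exactly. In truth there is essentially no obstacle here; the proof is a routine verification. The only point that deserves emphasis is the perfect-square identity for $E^2+F^2+\Lambda$, which lets every commutator $[\A,\A^*(x)]$ and every $(\A^*(x))^2$ term cancel cleanly, sparing one from expanding $\rho(x)(E^2)+\rho(x)(F^2)$ and $\rho(x)(\Lambda)$ separately and tracking the cumbersome cross-terms involving $[\A,\A^*(x)]^2$.
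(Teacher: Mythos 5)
Your proposal is correct and follows essentially the same route as the paper: the paper likewise uses the Casimir identity to recognize the perfect square, rewriting $\natural(B)$ as $\frac{(E+F-1)(E+F+1)}{4}$ in $\U$ and then applying $\rho(x)(E+F)=\A$. The only cosmetic difference is that the paper cancels the $H^2$ terms inside $\U$ before applying $\rho(x)$, whereas you carry the $\frac{H^2}{2}$ along and cancel the $(\A^*(x))^2$ contributions afterwards.
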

\begin{proof}
Let $x\in X$ be given. Recall $A^\natural$ and $B^\natural$ from (\ref{A}) and (\ref{B}).
By Theorem \ref{thm:UtoT(hypercube)} the image of $A^\natural$ under $\rho(x)$ is as stated.
Using (\ref{U_casimir}) yields that $B^\natural$ is equal to $\frac{(E+F-1)(E+F+1)}{4}$. Hence the image of $B^\natural$ under $\rho(x)$ is as stated. The result follows.
\end{proof}

Let $x\in X$ be given.
By Theorem \ref{thm:UtoT(hypercube)} each $\T(x)$-module is a $\U$-module. We denote by $\C^X(x)$ the natural $\T(x)$-module structure on $\C^X$. Let $V$ denote a vector space. For any integer $n\geq 1$ we write $n\cdot V$ for
$
\underbrace{V\oplus V\oplus \cdots \oplus V}_
{\hbox{{\tiny $n$ copies of $V$}}}$.

\begin{thm}
[Theorem 10.2, \cite{hypercube2002}]
\label{thm:Umodule CX}
For each $x\in X$ the $\U$-module $\C^X(x)$ is isomorphic to
$$
\bigoplus_{k=0}^{\floor{\frac{D}{2}}}
\frac{D-2k+1}{D-k+1} {D\choose k}\cdot L_{D-2k}.
$$
\end{thm}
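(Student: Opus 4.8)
The plan is to exploit that $\C^X(x)$ is a finite-dimensional $\U$-module and that every such module is completely reducible, so that $\C^X(x)\cong\bigoplus_{m\in\N} b_m\cdot L_m$ for some multiplicities $b_m\in\N$; the entire content of the theorem then lies in computing the $b_m$. Since the displayed decomposition is an isomorphism invariant, I would pin down the $b_m$ from the action of $H$ alone, i.e.\ from the weight (character) of $\C^X(x)$. Here $H$ acts as $\A^*(x)$ by Theorem~\ref{thm:UtoT(hypercube)}, and this diagonal operator gives the cheapest invariant to read off.

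First I would record the weight multiplicities of $\C^X(x)$. By definition $\A^*(x)$ acts on a vertex $y\in X$ as the scalar $D-2\partial(x,y)$, so its eigenvalues are the integers $D-2j$ for $0\le j\le D$, and the $(D-2j)$-eigenspace is spanned by the $\binom{D}{j}$ vertices at distance $j$ from $x$. In particular every weight of $\C^X(x)$ is congruent to $D$ modulo $2$.

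Next I would translate multiplicities of weights into multiplicities of irreducibles. By Lemma~\ref{lem:Vn_U}(i) the $H$-weights of $L_m$ are $m,m-2,\dots,-m$, each occurring once, so in the semisimple module $\C^X(x)$ the multiplicity of a weight $\theta\ge 0$ equals the number of summands $L_m$ with $m\ge\theta$ and $m\equiv\theta\pmod 2$; subtracting the multiplicity of weight $\theta+2$ therefore isolates $b_\theta$. Applying this with $\theta=D-2k$ (so that $0\le k\le\floor{\frac{D}{2}}$, while all weights of the wrong parity force $b_m=0$), the count from the previous step yields
\[
b_{D-2k}=\binom{D}{k}-\binom{D}{k-1},
\]
with the convention $\binom{D}{-1}=0$. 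A short manipulation, using $\binom{D}{k-1}=\frac{k}{D-k+1}\binom{D}{k}$, rewrites this as $\frac{D-2k+1}{D-k+1}\binom{D}{k}$, which is exactly the claimed multiplicity of $L_{D-2k}$.

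The steps are individually routine; the only genuinely essential input is complete reducibility of finite-dimensional $\U$-modules, which guarantees that matching weights suffices to determine the decomposition. Without it one would instead have to exhibit the requisite highest-weight vectors explicitly (those $v$ with $Ev=0$ and $Hv=(D-2k)v$) and count them, which is more laborious given the concrete form of $E=\frac{\A}{2}-\frac{[\A,\A^*(x)]}{4}$. The one point demanding a little care is the parity bookkeeping: one must verify that weights of the wrong parity contribute nothing and that the index $k$ runs exactly over $0,1,\dots,\floor{\frac{D}{2}}$, so that the weights $D-2k$ remain nonnegative and the range matches the statement.
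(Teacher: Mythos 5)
Your proof is correct, but there is nothing in the paper to compare it against: the paper does not prove this statement, it imports it verbatim as Theorem~10.2 of \cite{hypercube2002} (Go's paper on the Terwilliger algebra of the hypercube). Your route---Weyl's complete reducibility for finite-dimensional $\U$-modules plus the classification of the irreducibles $L_m$, then determination of the multiplicities purely from the $H$-weight data, where $H$ acts as $\A^*(x)$ with eigenvalue $D-2j$ on the $\binom{D}{j}$-dimensional span of the vertices at distance $j$ from $x$---is sound, and the telescoping
\begin{equation*}
b_{D-2k}=\binom{D}{k}-\binom{D}{k-1}=\frac{D-2k+1}{D-k+1}\binom{D}{k}
\end{equation*}
is exactly the claimed multiplicity; the parity and range bookkeeping you flag is handled correctly (all weights are congruent to $D$ modulo $2$ and bounded by $D$, so only $L_{D-2k}$ with $0\le k\le\floor{\frac{D}{2}}$ can occur). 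By contrast, the proof in the cited reference is carried out inside the Terwilliger-algebra framework: the irreducible $\T(x)$-modules of $H(D,2)$ are constructed explicitly as thin modules with endpoint $k$ and diameter $D-2k$, and the multiplicity of the endpoint-$k$ module emerges as the same binomial difference, obtained by comparing dimensions of subconstituents (the images of the dual idempotents $E_k^*$), which is your weight count in combinatorial clothing. What your argument buys is brevity and independence from the Terwilliger machinery, at the cost of invoking Weyl's theorem as a black box; what the cited approach buys is explicit module generators inside $\C^X$, which is what one needs if one wants the isomorphism realized by concrete vectors rather than deduced from character data.
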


Define
\begin{align*}
X_e=\left\{
x\in X\,\Big|\, \hbox{$\sum\limits_{i=1}^D x_i$ is even}
\right\}.
\end{align*}
The halved graph $\frac{1}{2}H(D,2)$ of $H(D,2)$ is a finite simple connected graph with vertex set $X_e$ and $x,y\in X_e$ are adjacent if and only if $\partial(x,y)=2$. Hence the adjacency operator of $\frac{1}{2}H(D,2)$ is equal to
\begin{align*}
\frac{\A^2-D}{2}\Big|_{\C^{X_e}}.
\end{align*}
Let $x\in X_e$ be given. By \cite{TerAlgebraI,TerAlgebraII,TerAlgebraIII,BannaiIto1984} the dual adjacency operator of
$\frac{1}{2}H(D,2)$ with respect to $x$ is equal to
\begin{align*}
\left\{
\begin{array}{ll}
\frac{1}{2}\A^*(x)\big|_{\C^{X_e}}
\qquad
&\hbox{if $D=2$},
\\
\A^*(x)\big|_{\C^{X_e}}
\qquad
&\hbox{if $D\geq 3$}.
\end{array}
\right.
\end{align*}
Therefore the {\it Terwilliger algebra $\T_e(x)$ of $\frac{1}{2}H(D,2)$ with respect to $x$}  \cite{TerAlgebraI,TerAlgebraII,TerAlgebraIII} is the subalgebra of ${\rm End}(\C^{X_e})$ generated by $\A^2|_{\C^{X_e}}$ and $\A^*(x)\big|_{\C^{X_e}}$.

\begin{thm}
\label{thm:Te(X)}
For each $x\in X_e$ the following hold:
\begin{enumerate}
\item $\T_e(x)=
\{M|_{\C^{X_e}}\,|\, M\in {\rm Im}\,(\rho(x)\circ\natural) \}$.

\item $\T_e(x)=
\{M|_{\C^{X_e}}\,|\, M\in {\rm Im}\,(\rho(x)|_{\U_e})\}$.
\end{enumerate}
\end{thm}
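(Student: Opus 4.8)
The plan is first to note that parts (i) and (ii) assert the equality of $\T_e(x)$ with one and the same set, so that a single argument settles both. Indeed ${\rm Im}(\rho(x)\circ\natural)=\rho(x)(\natural(\H))=\rho(x)(\U_e)={\rm Im}(\rho(x)|_{\U_e})$, where the middle equality uses ${\rm Im}\,\natural=\U_e$ from Theorem~\ref{thm:image of natural}(i). Hence it suffices to prove (i).

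Write $V=\C^X(x)$ for the natural $\U$-module. First I would check that $\C^{X_e}$ is invariant under every operator in ${\rm Im}(\rho(x)\circ\natural)=\rho(x)(\U_e)$, so that restriction to $\C^{X_e}$ is meaningful. Since $x\in X_e$, a vertex $y$ lies in $X_e$ exactly when $\partial(x,y)$ is even, so $\C^{X_e}$ is the span of those basis vectors on which $\A^*(x)=\rho(x)(H)$ has eigenvalue $D-2\partial(x,y)\equiv D\pmod 4$; equivalently $\C^{X_e}=\bigoplus_{n\in\Z}V(D+4n)$ in the notation of Proposition~\ref{prop:V}. By that proposition (with $\theta=D$), $\C^{X_e}$ is a $\U_e$-submodule of $V$, so $\rho(x)(u)$ preserves $\C^{X_e}$ for every $u\in\U_e$. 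Restriction to an invariant subspace is an algebra homomorphism, so
$$
\phi:\H\longrightarrow\End(\C^{X_e}),\qquad \phi(h)=(\rho(x)\circ\natural)(h)|_{\C^{X_e}},
$$
is a well-defined algebra homomorphism, and ${\rm Im}\,\phi$ is exactly the right-hand side of (i).

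The main step is then a short generator comparison. As $\H$ is generated by $A,B,C$, the algebra ${\rm Im}\,\phi$ is generated by $\phi(A),\phi(B),\phi(C)$. By Lemma~\ref{lem:HtoT(hypercube)} we have $\phi(A)=\tfrac14\A^*(x)|_{\C^{X_e}}$ and $\phi(B)=\tfrac14(\A^2-1)|_{\C^{X_e}}$, while (\ref{R0}) gives $\phi(C)=\phi([A,B])=[\phi(A),\phi(B)]$. Since $\A^*(x)|_{\C^{X_e}}$ and $\A^2|_{\C^{X_e}}$ are by definition the generators of $\T_e(x)$ and a subalgebra is closed under commutators, all three of $\phi(A),\phi(B),\phi(C)$ belong to $\T_e(x)$; thus ${\rm Im}\,\phi\subseteq\T_e(x)$. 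Conversely $\A^*(x)|_{\C^{X_e}}=4\phi(A)$ and $\A^2|_{\C^{X_e}}=4\phi(B)+1$ lie in ${\rm Im}\,\phi$, so $\T_e(x)\subseteq{\rm Im}\,\phi$. This proves (i), and (ii) follows.

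I expect the only delicate point to be the invariance of $\C^{X_e}$: the content there is recognising $\C^{X_e}$ as the sum of the $H$-eigenspaces whose eigenvalues are spaced four apart, which is exactly the hypothesis of Proposition~\ref{prop:V}. Once restriction is legitimised as an algebra homomorphism, everything reduces to the already-established identification in Lemma~\ref{lem:HtoT(hypercube)} of the images of $A$ and $B$ with affine transforms of the two defining generators $\A^*(x)|_{\C^{X_e}}$ and $\A^2|_{\C^{X_e}}$ of $\T_e(x)$.
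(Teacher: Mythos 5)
Your proposal is correct and takes essentially the same route as the paper: part (ii) follows from ${\rm Im}\,\natural=\U_e$ (Theorem \ref{thm:image of natural}(i)), and part (i) is a generator comparison using Lemma \ref{lem:HtoT(hypercube)} together with the fact that $\H$ is generated by $A,B$ (and $C=[A,B]$). The only difference is that you justify the invariance of $\C^{X_e}$ explicitly via Proposition \ref{prop:V}, a point the paper leaves implicit (the restricted generators $\A^*(x)$ and $\A^2$ visibly preserve $\C^{X_e}$); this is a worthwhile detail but not a different argument.
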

\begin{proof}
(i):  By Definition \ref{defn:H} the algebra $\H$ is generated by $A$ and $B$. Theorem \ref{thm:Te(X)}(i) is now immediate from Lemma \ref{lem:HtoT(hypercube)}.

(ii): Immediate from Theorems \ref{thm:image of natural}(i) and \ref{thm:Te(X)}(i).
\end{proof}

Let $x\in X_e$ be given.
By Theorem \ref{thm:Te(X)} each $\T_e(x)$-module is a $\U_e$-module as well as an $\H$-module. We denote by $\C^{X_e}(x)$ the natural $\T_e(x)$-module structure on $\C^{X_e}$.

\begin{thm}
\label{thm:Uemodule CXe}
For each $x\in X_e$ the $\U_e$-module $\C^{X_e}(x)$ is isomorphic to
\begin{equation*}
\bigoplus_{\substack{k=0 \\ \hbox{\tiny $k$ is even}}}^{\floor{\frac{D}{2}}}\frac{D-2k+1}{D-k+1}\binom{D}{k}\cdot L_{D-2k}^{(0)}
\oplus
\bigoplus_{\substack{k=1 \\ \hbox{\tiny $k$ is odd}}}^{\floor{\frac{D-1}{2}}}\frac{D-2k+1}{D-k+1}\binom{D}{k}\cdot L_{D-2k}^{(1)}.
    \end{equation*}
\end{thm}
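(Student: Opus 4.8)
The plan is to realize $\C^{X_e}(x)$ as the distinguished $\U_e$-submodule of $\C^X(x)$ cut out by a congruence on the $H$-eigenvalues, and then to transport the $\U$-module decomposition of Theorem~\ref{thm:Umodule CX} across the even/odd splitting of each $L_n$ recorded in Definition~\ref{defn:Ln0, Ln1}. First I would pin down which $H$-eigenspaces of $\C^X(x)$ make up $\C^{X_e}$. For $y\in X$ the vector $y$ is an $H$-eigenvector with eigenvalue $D-2\partial(x,y)$, since $H=\A^*(x)$. A short parity count gives $\sum_i y_i\equiv \sum_i x_i+\partial(x,y)\pmod 2$, and because $x\in X_e$ this says $y\in X_e$ exactly when $\partial(x,y)$ is even, i.e.\ exactly when $D-2\partial(x,y)\equiv D\pmod 4$. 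Hence $\C^{X_e}=\bigoplus_{n\in\Z}\C^X(x)(D+4n)$, which by Proposition~\ref{prop:V} is precisely the $\U_e$-submodule singled out by $\theta=D$; and by Theorem~\ref{thm:Te(X)}(ii) the $\U_e$-action this subspace inherits is exactly the one carried by $\C^{X_e}(x)$.

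Next I would invoke the $\U$-module isomorphism $\phi:\C^X(x)\xrightarrow{\sim}\bigoplus_{k}c_k\cdot L_{D-2k}$ of Theorem~\ref{thm:Umodule CX}, where I abbreviate $c_k=\frac{D-2k+1}{D-k+1}\binom{D}{k}$. Since $H\in\U$, the map $\phi$ is eigenspace-preserving, so it carries the subspace of the source spanned by eigenvectors with eigenvalue $\equiv D\pmod 4$ onto the analogous subspace of the target. Restricting scalars to $\U_e$, Proposition~\ref{prop:V} makes both of these subspaces $\U_e$-submodules, and $\phi$ restricts to a $\U_e$-module isomorphism between them. By the first paragraph the source side is $\C^{X_e}(x)$, so it remains only to identify, inside each summand $L_{D-2k}$, the $\U_e$-submodule spanned by those $H$-eigenvectors whose eigenvalue is $\equiv D\pmod 4$.

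For this identification I would work in the basis $\{v_j\}$ of Lemma~\ref{lem:Vn_U}, on which $H$ acts with eigenvalue $n-2j$ for $n=D-2k$. The congruence $n-2j\equiv D\pmod 4$ reduces to $j\equiv k\pmod 2$. By Definition~\ref{defn:Ln0, Ln1} the even-index vectors $v_{2i}$ span $L_{D-2k}^{(0)}$ and the odd-index vectors $v_{2i+1}$ span $L_{D-2k}^{(1)}$, so the desired subspace of $L_{D-2k}$ is $L_{D-2k}^{(0)}$ when $k$ is even and $L_{D-2k}^{(1)}$ when $k$ is odd, each occurring with the original multiplicity $c_k$. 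Summing over $0\le k\le\floor{\frac{D}{2}}$ then yields the stated decomposition. The one bookkeeping subtlety is the upper limit of the odd-$k$ sum: when $D$ is even and $k=D/2$ is odd the corresponding summand would be $L_0^{(1)}$, but $L_0$ carries only the eigenvalue $0\not\equiv D\pmod 4$, so this contribution is empty and the odd sum correctly terminates at $\floor{\frac{D-1}{2}}$.

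I expect the parity computations to be routine; the step needing genuine care is the last one, namely matching the parity of the index $j$ to the $L^{(0)}/L^{(1)}$ labels and then checking the boundary term $k=D/2$. There one must verify that the vanishing of the would-be $L_0^{(1)}$ summand is exactly what reconciles the two distinct upper limits $\floor{\frac{D}{2}}$ and $\floor{\frac{D-1}{2}}$ appearing in the statement, so that no term is silently dropped or double-counted.
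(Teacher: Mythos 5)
Your proof is correct and follows essentially the same route as the paper: identify $\C^{X_e}$ inside $\C^X(x)$ as the sum of the $H$-eigenspaces with eigenvalue congruent to $D$ modulo $4$, transport this through the decomposition of Theorem~\ref{thm:Umodule CX}, and match the resulting pieces against Definition~\ref{defn:Ln0, Ln1}. Your explicit treatment of the parity bookkeeping and of the boundary term $k=D/2$ (where the would-be $L_0^{(1)}$ vanishes) simply spells out steps the paper leaves implicit in its final sentence.
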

\begin{proof}
Let $x\in X_e$ be given. For any $y\in X$ observe that $y\in X_e$ if and only if $\A^*(x)y=(D-4i) y$ for some $i\in \Z$.
It follows from Theorem \ref{thm:UtoT(hypercube)} that
$$
\C^{X_e}=\bigoplus_{i\in \Z}\C^X(D-4i).
$$
Combined with Theorem \ref{thm:Umodule CX} the $\U_e$-module $\C^{X_e}(x)$ is isomorphic to
$$
\bigoplus_{k=0}^{\floor{\frac{D}{2}}}
\frac{D-2k+1}{D-k+1}
{D\choose k}
\cdot
\bigoplus_{i\in \Z} L_{D-2k}(D-4i).
$$
By Definition \ref{defn:Ln0, Ln1} the result follows.
\end{proof}

By \cite{TerAlgebraI} the algebra $\T_e(x)$ is semi-simple.
We remark that the first description of all non-isomorphic irreducible $\T_e(x)$-modules was given in \cite{TerAlgebraIII}.

\begin{cor}
\label{cor:Te(x) structure}
For each $x\in X_e$ the following hold:
\begin{enumerate}
\item The $\T_e(x)$-modules
\begin{align}
&L_{D-2k}^{(0)}
\qquad
\hbox{for all even integers $k$ with $0\leq k\leq \floor{\frac{D}{2}}$};
\label{L_D-2k^0}
\\
&L_{D-2k}^{(1)}
\qquad
\hbox{for all odd integers $k$ with $1\leq k\leq \floor{\frac{D-1}{2}}$}
\label{L_D-2k^1}
\end{align}
are all irreducible $\T_e(x)$-modules up to isomorphism.

\item The algebra $\T_e(x)$ is isomorphic to
\begin{gather*}
\bigoplus_{\substack{k=0 \\ \hbox{\tiny $k$ is even}}}^{\floor{\frac{D}{2}}}
{\rm End}
\left(
\C^{\floor{\frac{D}{2}}-k+1}
\right)
\oplus
\bigoplus_{\substack{k=1 \\ \hbox{\tiny $k$ is odd}}}^{\floor{\frac{D-1}{2}}}
{\rm End}
\left(
\C^{\floor{\frac{D-1}{2}}-k+1}
\right).
\end{gather*}

\item The dimension of $\T_e(x)$ is equal to
$$
{\floor{\frac{D}{2}}+3\choose 3}+
{\ceil{\frac{D}{2}}+1\choose 3}.
$$
\end{enumerate}
\end{cor}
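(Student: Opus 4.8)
The plan is to leverage the semi-simplicity of $\T_e(x)$ together with the explicit decomposition of the natural module $\C^{X_e}(x)$ furnished by Theorem~\ref{thm:Uemodule CXe}. The starting observation is that, by Theorem~\ref{thm:Te(X)}(ii), the operators realizing the $\T_e(x)$-action on $\C^{X_e}$ are exactly the operators $\rho(x)(u)|_{\C^{X_e}}$ for $u\in\U_e$. Consequently a subspace of $\C^{X_e}$ is $\T_e(x)$-invariant if and only if it is $\U_e$-invariant, and a linear map between two subquotients is a $\T_e(x)$-homomorphism if and only if it is a $\U_e$-homomorphism. Thus the $\U_e$-module decomposition in Theorem~\ref{thm:Uemodule CXe} is simultaneously the decomposition of $\C^{X_e}(x)$ as a $\T_e(x)$-module.

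For part (i) I would first transfer irreducibility and non-isomorphism. By Lemmas~\ref{lem:Ln0 irreducible} and \ref{lem:Ln1 irreducible} each summand $L_{D-2k}^{(0)}$ and $L_{D-2k}^{(1)}$ is an irreducible $\U_e$-module, and by Theorem~\ref{thm:Ln0&Ln1 noniso} the summands appearing in Theorem~\ref{thm:Uemodule CXe} are pairwise non-isomorphic as $\U_e$-modules; by the equivalence noted above these properties hold verbatim for the $\T_e(x)$-module structure. It then remains to see that \emph{every} irreducible $\T_e(x)$-module occurs. Since $\T_e(x)$ is by definition a subalgebra of $\End(\C^{X_e})$, the module $\C^{X_e}(x)$ is faithful; and a faithful module over a semi-simple algebra contains each of its simple modules as a direct summand. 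Hence the modules (\ref{L_D-2k^0}) and (\ref{L_D-2k^1}) exhaust the irreducible $\T_e(x)$-modules up to isomorphism, giving (i).

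Part (ii) is then the Artin--Wedderburn decomposition of the semi-simple algebra $\T_e(x)$: it is isomorphic to $\bigoplus_W \End(W)$, the sum ranging over the distinct irreducibles $W$ found in (i). The required block sizes come from Lemmas~\ref{lem:H-module Ln0}(i) and \ref{lem:H-module Ln1}(i), namely $\dim L_{D-2k}^{(0)}=\floor{\frac{D}{2}}-k+1$ and $\dim L_{D-2k}^{(1)}=\floor{\frac{D-1}{2}}-k+1$, which match the two families of matrix factors in the statement.

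Finally, part (iii) follows from $\dim \T_e(x)=\sum_W (\dim W)^2$ over the same index set. Reindexing the even $k$ by $k=2j$ and the odd $k$ by $k=2j+1$ turns the two contributions into sums of squares of consecutive integers of a fixed parity; applying the identity $\ell^2=\binom{\ell+1}{2}+\binom{\ell}{2}$ and the hockey-stick identity then collapses everything into the two binomial coefficients $\binom{\floor{\frac{D}{2}}+3}{3}$ and $\binom{\ceil{\frac{D}{2}}+1}{3}$. I expect this final summation, with its bookkeeping of the parities of $\floor{\frac{D}{2}}$ and $\floor{\frac{D-1}{2}}$, to be the only genuinely calculational step; the conceptual content of (i) and (ii) is already carried by semi-simplicity and the module identification, so the main point to state carefully is the standard fact that a faithful module over a semi-simple algebra meets every simple summand.
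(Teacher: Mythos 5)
Your proposal is correct and follows essentially the same route as the paper: both identify $\T_e(x)$-modules with $\U_e$-modules via Theorem~\ref{thm:Te(X)}(ii), use faithfulness of $\C^{X_e}(x)$ plus semi-simplicity of $\T_e(x)$ together with Lemmas~\ref{lem:Ln0 irreducible}, \ref{lem:Ln1 irreducible} and Theorem~\ref{thm:Ln0&Ln1 noniso} for (i), Wedderburn for (ii), and the sum of squared dimensions for (iii). The only difference is cosmetic: you spell out the binomial/hockey-stick identities behind (iii), which the paper asserts without detail.
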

\begin{proof}
(i): Since the $\T_e(x)$-module $\C^{X_e}(x)$ is faithful, each irreducible $\T_e(x)$-module is contained in $\C^{X_e}(x)$. Recall from Lemmas \ref{lem:Ln0 irreducible} and \ref{lem:Ln1 irreducible} that the $\U_e$-modules (\ref{L_D-2k^0}) and (\ref{L_D-2k^1}) are irreducible. Recall from Theorem \ref{thm:Ln0&Ln1 noniso} that the $\U_e$-modules (\ref{L_D-2k^0}) and (\ref{L_D-2k^1}) are mutually non-isomorphic.
Corollary \ref{cor:Te(x) structure}(i) now follows by the above comments and Theorem \ref{thm:Te(X)}(ii).

(ii): By Corollary \ref{cor:Te(x) structure}(i) the algebra $\T_e(x)$ is isomorphic to
$$
\bigoplus_{\substack{k=0 \\ \hbox{\tiny $k$ is even}}}^{\floor{\frac{D}{2}}}
{\rm End}
(L_{D-2k}^{(0)})
\oplus
\bigoplus_{\substack{k=1 \\ \hbox{\tiny $k$ is odd}}}^{\floor{\frac{D-1}{2}}}
{\rm End}
(L_{D-2k}^{(1)}).
$$
Combined with Lemmas \ref{lem:H-module Ln0}(i) and \ref{lem:H-module Ln1}(i) this implies Corollary \ref{cor:Te(x) structure}(ii).

(iii): By Corollary \ref{cor:Te(x) structure}(ii) the dimension of $\T_e(x)$ is equal to
$$
\sum_{\substack{k=0 \\ \hbox{\tiny $k$ is even}}}^{\floor{\frac{D}{2}}}
\left(
\floor{\frac{D}{2}}-k+1
\right)^2
+
\sum_{\substack{k=1 \\ \hbox{\tiny $k$ is odd}}}^{\floor{\frac{D-1}{2}}}
\left(
\floor{\frac{D-1}{2}}-k+1
\right)^2.
$$
The first summation is equal to ${\floor{\frac{D}{2}}+3\choose 3}$ and the second summation is equal to ${\ceil{\frac{D}{2}}+1 \choose 3}$.
\end{proof}

\subsection*{Acknowledgements}
The research is supported by the National Science and Technology Council of Taiwan under the projects
MOST 109-2115-M-009-007-MY2, MOST 110-2115-M-008-008-MY2 and MOST 111-2115-M-A49-005-MY2.

\bibliographystyle{amsplain}
\bibliography{MP}

\end{document}